\newcommand{\cG}{\ensuremath{\mathcal G}}
\newcommand{\cK}{\ensuremath{\mathcal K}}
\newcommand{\cP}{\ensuremath{\mathcal P}}
\newcommand{\eps}{\varepsilon}
\renewcommand{\phi}{\varphi}
\DeclareMathOperator*{\E}{\mathbb{E}}
\DeclareMathOperator*{\N}{\mathbb{N}}
\DeclareMathOperator*{\R}{\mathbb{R}}
\DeclarePairedDelimiter\ceil{\lceil}{\rceil}
\DeclarePairedDelimiter\floor{\lfloor}{\rfloor}
\newcommand{\Gnp}{G_{n, p}}
\newcommand{\GNp}{G_{N, p}}
\newcommand{\binoms}[2]{{\textstyle\binom{#1}{#2}}} 
\declaretheorem[parent=section]{theorem}
\declaretheorem[sibling=theorem]{lemma}
\declaretheorem[sibling=theorem]{proposition}
\declaretheorem[sibling=theorem]{claim}
\declaretheorem[sibling=theorem]{corollary}
\declaretheorem[sibling=theorem,style=definition]{definition}
\setlist{itemsep=0.1em, topsep=0.1em}
\def\thm@space@setup{%
  \thm@preskip=0.5em \thm@postskip=0.2em
}
\definecolor{RoyalAzure}{rgb}{0.0, 0.22, 0.66}
\title{Local resilience of an almost spanning $k$-cycle in random graphs}
\author{
  Nemanja \v{S}kori\'{c}\thanks{Institute of Theoretical Computer Science, ETH Z\"{u}rich, 8092 Z\"{u}rich, Switzerland \newline Email: \{\texttt{nskoric}\textbar \texttt{steger}\textbar \texttt{mtrujic}\}\texttt{@inf.ethz.ch}}
  \and
  Angelika Steger\footnotemark[1]
  \and
  Milo\v{s} Truji\'{c}\footnotemark[1] \textsuperscript{,}\thanks{author was supported by grant no.\ 200021 169242 of the Swiss National Science Foundation.}
}
\date{}
\begin{document}
\maketitle

\begin{abstract}
  The famous P\'{o}sa-Seymour conjecture, confirmed in 1998 by Koml\'{o}s,
  S\'{a}rk\"{o}zy, and Szemer\'{e}di, states that for any $k \geq 2$, every
  graph on $n$ vertices with minimum degree $kn/(k + 1)$ contains the $k$-th
  power of a Hamilton cycle. We extend this result to a sparse random setting.

  We show that for every $k \geq 2$ there exists $C > 0$ such that if $p \geq
  C(\log n/n)^{1/k}$ then w.h.p.\ every subgraph of a random graph $\Gnp$ with
  minimum degree at least $(k/(k + 1) + o(1))np$, contains the $k$-th power of a
  cycle on at least $(1 - o(1))n$ vertices, improving upon the recent results of
  Noever and Steger for $k = 2$, as well as Allen et al.\ for $k \geq 3$.

  Our result is almost best possible in three ways: for $p \ll n^{-1/k}$ the
  random graph $\Gnp$ w.h.p.\ does not contain the $k$-th power of any long
  cycle; there exist subgraphs of $\Gnp$ with minimum degree $(k/(k + 1) +
  o(1))np$ and $\Omega(p^{-2})$ vertices not belonging to triangles; there exist
  subgraphs of $\Gnp$ with minimum degree $(k/(k + 1) - o(1))np$ which do not
  contain the $k$-th power of a cycle on $(1 - o(1))n$ vertices.
\end{abstract}

\section{Introduction}

For a cycle $C$ and an integer $k \in \N$, the \emph{$k$-th power of a cycle} (\emph{$k$-cycle} for short) is obtained by including an edge between all pairs of vertices whose distance on $C$ is at most $k$. A classic result of Dirac~\cite{dirac1952some} states that any graph $G$ on $n \geq 3$ vertices with minimum degree $\delta(G) \geq n/2$ contains a Hamilton cycle. The famous P\'{o}sa-Seymour conjecture~\cite{erdos1964problem, seymour1973problem} generalises this statement to higher powers of a Hamilton cycle: every graph $G$ on $n$ vertices with minimum degree $\delta(G) \geq kn/(k + 1)$, contains the $k$-th power of a Hamilton cycle. Only after the appearance of powerful tools such as Szemer\'{e}di's regularity lemma and the blow-up lemma, the conjecture was resolved by Koml\'{o}s, Sark\"{o}zy, and Szemer\'{e}di~\cite{komlos1998proof}, at least for large enough values of $n$.
\begin{theorem}[Koml\'{o}s, S\'{a}rk\"{o}zy, Szemer\'{e}di~\cite{komlos1998proof}]\label{thm:KSS}
  For any positive integer $k$, there exists a natural number $n_0$ such that if $G$ has order $n$ with $n \geq n_0$ and
  \[
    \delta(G) \geq \frac{k}{k + 1}n,
  \]
  then $G$ contains the $k$-th power of a Hamilton cycle.
\end{theorem}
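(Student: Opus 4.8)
The plan is to argue via the regularity method: use Szemer\'{e}di's regularity lemma to reveal the coarse structure of $G$, the Hajnal--Szemer\'{e}di theorem on $K_{k+1}$-tilings to build a skeleton inside the reduced graph, and the blow-up lemma to realise the $k$-th power of a path inside each tile, chaining the pieces cyclically. Fix constants $1/n_0 \ll d \ll \epsilon \ll 1/k$. First I would apply the regularity lemma to obtain an $\epsilon$-regular equipartition $V(G) = V_0 \cup V_1 \cup \dots \cup V_t$ with $|V_0| \le \epsilon n$ and $|V_1| = \dots = |V_t| = m$, and form the reduced graph $R$ on $[t]$ with $ij \in E(R)$ whenever $(V_i, V_j)$ is $\epsilon$-regular of density at least $d$. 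A routine averaging argument, using only $\delta(G) \ge kn/(k+1)$, shows $\delta(R) \ge \bigl(k/(k+1) - \gamma\bigr)t$ where $\gamma = \gamma(\epsilon, d)$ can be made arbitrarily small.

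Next I would build a skeleton in $R$. Since Hajnal--Szemer\'{e}di is tight at minimum degree $kt/(k+1)$, the deficit $\gamma t$ blocks me from extracting a spanning $K_{k+1}$-factor directly, but a tiling argument still gives vertex-disjoint copies of $K_{k+1}$ covering all but $O_k(\gamma t)$ of the clusters; the clusters left out are moved into a leftover set, whose total size stays $o(n)$. Because $\delta(R) > t/2$, a Dirac-type argument supplies enough global connectivity to string the tiles into a cyclic list $Q_1, \dots, Q_s$ together with ``connecting'' $\epsilon$-regular pairs joining consecutive tiles. After a clean-up step --- deleting a bounded fraction of vertices from each cluster (into the leftover set) and equalising cluster sizes --- I may assume every pair inside each $Q_i$ and every connecting pair is super-regular.

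Now I would absorb the leftover set $V^{\ast} \supseteq V_0$. Every $v \in V^{\ast}$ has at least $kn/(k+1)$ neighbours in $G$, hence a dense neighbourhood in many clusters; using this I would, with care, splice each such $v$ into the partially built $k$-th power of the cycle at a suitable place --- working locally, in bounded batches, using slots set aside in advance along the tiles --- so that only a constant number of clusters are disturbed and each only by a $o(1)$ fraction. Once $V^{\ast}$ has been inserted, it remains to fill the tiles: each $Q_i$ is a complete $(k+1)$-partite super-regular system on clusters of size $\approx m$, so the blow-up lemma embeds the $k$-th power of a path spanning those clusters with its $O(k)$ end vertices placed to meet the connecting pairs, and concatenating these path powers around $Q_1 \to \dots \to Q_s \to Q_1$ yields the $k$-th power of a Hamilton cycle.

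I expect the main obstacle to be the exactness of the hypothesis. The regularity step and the insertion of $V_0$ and of the clusters missed by the tiling each waste an $o(1)$ fraction of the vertices, yet the conclusion insists on a genuinely spanning structure at the extremal density $kn/(k+1)$ --- there is no degree slack available to pay for such losses crudely. The delicate point is to show that \emph{nothing} is left behind: one must either run a stability analysis forcing $R$ to be close to a disjoint union of $(k+1)$-partite blocks, so that the tiling can be rebalanced to be perfect, or harness the surplus degree that comes from sitting strictly above the Hajnal--Szemer\'{e}di threshold on all but a handful of clusters to reserve a linear pool of insertion slots large enough to swallow every single leftover vertex, including the most stubborn ones of $V_0$. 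Once the exact clique skeleton together with such a reservoir is in place, the connecting argument and the final blow-up embedding are comparatively routine.
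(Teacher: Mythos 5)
The paper does not prove this statement at all: Theorem~\ref{thm:KSS} is quoted verbatim from Koml\'os, S\'ark\"ozy, and Szemer\'edi~\cite{komlos1998proof} and used as a black box, so there is no internal proof to compare against. Your outline does follow the same general route as the original argument (regularity lemma, Hajnal--Szemer\'edi tilings in the reduced graph, blow-up lemma for the spanning embedding inside each tile), so the toolkit is the right one.

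However, as a proof it has a genuine gap, and it is exactly the one you flag yourself in the last paragraph: the theorem asks for a \emph{spanning} structure at the \emph{tight} degree threshold $kn/(k+1)$, and every step of your sketch (the exceptional set $V_0$, the clusters missed by the $K_{k+1}$-tiling, the clean-up to super-regularity, the equalisation of cluster sizes) discards vertices that must ultimately all be recovered. Saying that one should ``either run a stability analysis \ldots or harness the surplus degree \ldots to reserve a linear pool of insertion slots'' names the problem but does not solve it; in the actual proof this is where essentially all of the work lies, including a full extremal-case analysis (the reduced graph can be close to a balanced complete $(k+1)$-partite graph, where no surplus degree exists and the almost-perfect tiling genuinely cannot be upgraded for free), divisibility corrections to the tiling, and a delicate connecting step: to concatenate the $k$-th powers of paths across consecutive tiles one needs $k$ consecutive vertices forming cliques that straddle two tiles, which requires much more than the connecting pairs merely being $\epsilon$-regular. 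None of these steps would ``fail'' in principle---they are the ones carried out in~\cite{komlos1998proof}---but in your write-up they are asserted rather than proved, so the proposal is an accurate high-level road map of the known proof rather than a proof.
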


For a monotone increasing graph property $\cP$ and a graph $G$ which has property $\cP$, the \emph{resilience of $G$ with respect to $\cP$} measures how much one must change $G$ in order to destroy this property. \emph{Global resilience} measures this in terms of the number of edges, which for properties like containment of spanning structures is not very meaningful, as these can be easily destroyed just by isolating a vertex. Here we need the notion of \emph{local resilience}, which we define next.
\begin{definition}[Local resilience]
  Let $\cP$ be a monotone increasing graph property. We define the {\em local resilience} of a graph $G$ with respect to the property $\cP$ to be
  \begin{align*}
    r(G, \cP) := \min \{r \colon \exists H \subseteq G & \text{ such that all } v \in V(G)\text{ satisfy }\\
    & d_H(v) \leq r \cdot d_G(v) \text{ and } G - H \text{ does not have } \cP\}.
  \end{align*}
\end{definition}

From the results of Dirac and Koml\'{o}s, Sark\"{o}zy, and Szemer\'{e}di it follows that the local resilience of a complete graph $K_n$ with respect to containing the $k$-th power of a Hamilton cycle is at least $1/(k + 1)$. By considering a complete $(k + 1)$-partite graph with $k$ parts of size $(n - 1)/(k + 1)$ and one part of size $(n + k)/(k + 1)$ one easily checks that $1/(k + 1)$ is actually optimal.

A natural generalisation of resilience results for the complete graph is to consider random graphs, cf.\
Sudakov and Vu~\cite{sudakov2008local}. For a positive integer $N$ and a function $0 \leq p := p(N) \leq 1$, let $\GNp$ denote a graph on the vertex set $[N] = \{1, \ldots, N\}$, where each pair of vertices forms an edge independently and randomly with probability $p$.

In this paper, we study the local resilience of the property `contains the $k$-th power of a Hamilton cycle' for sparse random graphs. For $k = 1$, the question corresponds to the local resilience of Hamiltonicity, which was shown to be $1/2 + o(1)$ by Lee and Sudakov~\cite{lee2012dirac}, provided that $p \gg \log{N}/N$. An even stronger---the so-called `hitting-time' statement---has been recently shown by Nenadov and the last two authors~\cite{nenadov2017local}, and Montgomery~\cite{montgomery2017hamiltonicity}, independently. However, for $k \geq 2$, already the appearance threshold, i.e.\ the smallest $p$ such that the random graph $\GNp$ w.h.p.\ contains the $k$-th power of a Hamilton cycle, is yet to be fully understood. It is an easy first moment exercise to see that for $p \ll N^{-1/k}$ one does not expect the $k$-th power of any long cycle to appear in $\GNp$. On the other hand, for $k \geq 3$, K\"{u}hn and Osthus~\cite{kuhn2012posa} observed that the result of Riordan~\cite{riordan2000spanning} gives the correct answer: for $p \gg N^{-1/k}$ the random graph $\GNp$ w.h.p.\ contains the $k$-th power of a Hamilton cycle. The case $k = 2$ remains elusive with the currently best result $p \gg n^{-1/2} \log^4 n$ due to Nenadov and the first author~\cite{nenadov2016powers} which is optimal up to the logarithmic factor.

For local resilience, the best previously known results are by Noever and Steger~\cite{noever2017local} and Allen et al.~\cite{allen2016bandwidth}, for the square of a cycle and higher powers, respectively. The former states that for any $\eps, \gamma > 0$, if $p \geq N^{-1/2 + \gamma}$ the local resilience of $\GNp$ with respect to having the square of a cycle on $(1 - \eps)N$ vertices is $1/3 + o(1)$, while the latter is a variant of the bandwidth theorem for sparse random graphs and implies that for any $k \geq 2$, if $p \gg N^{-1/(2k)}$ the local resilience of $\GNp$ with respect to containing a $k$-cycle on all but $O(p^{-2})$ vertices, is $1/(k + 1) + o(1)$.

We improve upon both of these results by showing the local resilience statement for the property of containing the $k$-th power of an almost spanning cycle in a random graph $\GNp$ for density $p$ that is only by a factor of $(\log{N})^{1/k}$ away from the existence threshold.
\begin{restatable}{theorem}{MainTheorem}\label{thm:main-theorem}
  Let $k \geq 2$ be an integer. For every $\eps, \alpha > 0$ there exist positive constants $C(\eps, \alpha, k)$ and $c(\eps, \alpha, k)$ such that if $p \geq C(\frac{\log{N}}{N})^{1/k}$, with probability at least $1 - e^{- c \cdot (N/\log N)^{3/k}}$, every subgraph of $\GNp$ with minimum degree at least $(\frac{k}{k + 1} + \alpha)Np$ contains the $k$-th power of a cycle on at least $(1 - \eps)N$ vertices.
\end{restatable}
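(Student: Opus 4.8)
The plan is to follow the absorption paradigm, which has become standard for embedding almost-spanning structures in sparse random graphs. The argument splits into three main pieces: an absorbing structure, an almost-perfect cover, and the connection between them.

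First I would establish an \emph{absorbing path}. The goal is to build, at the very start, a short $k$-cycle segment $P_{\mathrm{abs}}$ (on $o(\eps N)$ vertices) with the property that for \emph{every} sufficiently small set $W \subseteq V(\Gnp)$ of ``leftover'' vertices (say $|W| \leq \beta N$ for an appropriate $\beta \ll \eps$), the graph $H$ still contains a $k$-cycle segment on exactly $V(P_{\mathrm{abs}}) \cup W$ with the same endpoints as $P_{\mathrm{abs}}$. The standard way to do this is via \emph{absorbing gadgets}: for each vertex $v$, one wants many small subgraphs $A_v$ such that both $A_v$ and $A_v$ with $v$ inserted appear as $k$-cycle segments of $H$ in two ways that ``swap out'' $v$. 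One shows by a first/second-moment computation in $\Gnp$ that w.h.p.\ every vertex has $\Omega(p^{\text{const}} N)$ potential gadgets; the minimum-degree condition $\delta(H) \geq (\frac{k}{k+1} + \alpha)Np$ guarantees that a positive fraction of these survive in $H$ (this is where resilience enters — one needs a ``local'' structure whose existence is robust under deleting a $\frac{1}{k+1}$-fraction of edges at each vertex, and a complete-multipartite extremal example shows $\frac{1}{k+1}$ is the right constant). A random selection then yields disjoint gadgets covering each vertex many times, which are strung together into $P_{\mathrm{abs}}$.

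Second, having set aside $P_{\mathrm{abs}}$, I would show that $H$ minus $V(P_{\mathrm{abs}})$ can be \emph{almost-perfectly covered} by a single $k$-cycle segment, leaving only a tiny leftover set $W$ with $|W| \leq \beta N$. This is really the dense-to-sparse transfer of Theorem~\ref{thm:KSS}. The natural route is a sparse regularity / blow-up-style argument: apply a sparse regularity lemma to $H$, pass to a reduced graph which inherits minimum degree essentially $\frac{k}{k+1}$, apply the Koml\'{o}s--S\'{a}rk\"{o}zy--Szemer\'{e}di theorem (or the Hajnal--Szemer\'{e}di-type $K_{k+1}$-factor that underlies it) in the reduced graph to obtain a spanning structure of $K_{k+1}$-cliques arranged in a cycle, and then use a sparse blow-up lemma to embed a long $k$-cycle into the corresponding regular pairs. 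Alternatively — and this is more likely what makes the $p \geq C(\log N / N)^{1/k}$ density work rather than $p \geq N^{-1/(2k)}$ — one uses an \emph{absorbers-and-connectors} scheme entirely within $\Gnp$: repeatedly greedily extend a $k$-cycle segment one $K_{k+1}$-block at a time, at each step using that the common neighbourhood (in $H$) of the last $k$ vertices of the current segment is still large (by minimum degree and a union bound over co-degrees in $\Gnp$), so a valid extension vertex exists as long as the uncovered set is not too small. The density threshold $n^{-1/k}$ is exactly what is needed for the relevant co-degree / small-subgraph-count concentration at the level of a single $K_{k+1}$ to hold (with the logarithmic factor providing the room for the union bound over all $N$ steps, which also explains the $e^{-c(N/\log N)^{3/k}}$ failure probability — it comes from a Janson- or Chernoff-type bound on the number of $K_{k+1}$'s or common neighbourhoods, raised to reflect $k$ coordinates).

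Third, I would \emph{connect}: use short ``connector'' segments (again guaranteed by the same co-degree estimates in $\Gnp$ together with the minimum-degree condition) to link one endpoint of the long almost-spanning segment to one endpoint of $P_{\mathrm{abs}}$, and the other two endpoints to each other, closing everything into one $k$-cycle whose vertex set is $V(P_{\mathrm{abs}}) \cup (\text{covered set}) \cup W$ except that $W$ is then swallowed by the absorbing property of $P_{\mathrm{abs}}$, yielding a $k$-cycle on $(1-\eps)N$ vertices.

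The main obstacle will be the second step — the almost-perfect cover at the near-optimal density $p \geq C(\log N/N)^{1/k}$. At this density one is extremely close to the appearance threshold for even a single $k$-cycle, so there is essentially no slack: global regularity-type arguments are too lossy, and one must instead run a careful, self-correcting greedy/extension process in which one maintains at every step strong pseudorandom control of the available extensions. Keeping this control for all of the $\Theta(N)$ extension steps simultaneously, with a failure probability as small as $e^{-c(N/\log N)^{3/k}}$, requires the sparse-random-graph toolkit (concentration of subgraph counts, common-neighbourhood sizes, and a local sparsity/``inheritance'' lemma ensuring that deleting $\tfrac{1}{k+1}$-fraction of edges does not destroy too many extension options), and stitching these estimates together against the adversarial choice of $H$ is the technical heart of the paper.
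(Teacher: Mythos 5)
Your proposal has two genuine gaps, and they are the two places where the actual work lies. First, the absorption step cannot work as stated in this resilience setting. As the paper's own discussion (via Huang, Lee, and Sudakov) notes, an adversary respecting the minimum-degree condition $(\tfrac{k}{k+1}+\alpha)Np$ can ensure that $\Omega(p^{-2})$ vertices lie in no triangle of $H$; since every vertex of a $k$-cycle with $k\geq 2$ lies in a triangle, such vertices can never be inserted, so an absorbing path with the property ``absorbs \emph{every} small leftover set $W$'' does not exist. Fortunately absorption is also unnecessary: the theorem only asks for a cycle on $(1-\eps)N$ vertices, so one may simply discard the leftover, which is exactly what the paper does (the uncovered vertices are the exceptional class plus a $O(\xi)$-fraction of each partition class).

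Second, the ``almost-perfect cover'' step — which you correctly identify as the heart — is left without a workable mechanism at the stated density. Your route (a), sparse regularity plus a sparse blow-up lemma, is essentially the Allen et al.\ bandwidth approach and is exactly what requires $p \gg N^{-1/(2k)}$, so it cannot reach $p \geq C(\log N/N)^{1/k}$. Your route (b), a greedy extension using the common neighbourhood in $H$ of the last $k$ vertices, fails against the adversary: the resilience condition only constrains single-vertex degrees, while the common $G_{N,p}$-neighbourhood of a fixed $k$-set has size only $\Theta(Np^k) = \Theta(\log N)$, so the adversary can destroy \emph{all} extensions of a given $k$-clique by deleting $O(\log N) = o(Np)$ edges at one of its vertices, and can afford to do this for very many $k$-cliques; hence ``the last $k$ vertices still have a large common neighbourhood in $H$'' is false in the worst case, and a local greedy argument also never exploits why $k/(k+1)$ is the correct constant. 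The paper's solution is different in kind: it applies the sparse regularity lemma and Koml\'os--S\'ark\"ozy--Szemer\'edi only at the level of the reduced graph (to get a $k$-cycle on constantly many classes), then refines the partition into classes of size $\Theta(N/\log N)$ so that there are $\Theta(\log N)$ classes per original class (Lemma~\ref{lem:regularity-partitioning}, which is where the extra $(\log N)^{1/k}$ in $p$ is paid, via regularity inheritance in small random subsets), and proves a \emph{clique expansion lemma} (Lemma~\ref{lem:clique-expansion-lemma}): in any chain of $(\delta,\eps)$-super-typical $(k+1)$-tuples of length $\ell \geq 3k^2\log N$, every $\delta$-fraction of canonical $K_k$'s contains a single copy that expands via $k$-paths to almost all $K_k$'s at the far end. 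Iterating this expansion around the cyclically ordered small classes builds the long $k$-path, and the cycle is closed not by co-degree connectors but by fixing at the start a clique $K^*$ that expands in both directions into a reserved tuple $(S_1,\ldots,S_t)$. None of this machinery — the logarithmic refinement of the partition, the super-typicality notion, or the expansion-and-halving argument — appears in your outline, so the proposal as written does not yield the theorem.
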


Our result is optimal with respect to the constant $1/(k + 1) $ in the local resilience and almost optimal with respect to the density $p$, as one cannot expect a long $k$-cycle to appear in $\GNp$ for $p \ll N^{-1/k}$. Moreover, by removing all edges in the neighbourhood of a vertex $v$, we can make sure that $v$ is not contained in any triangles. Thus for any $p = o(1)$, we have that the resilience of `containing a spanning $k$-cycle' is $o(1)$. In fact, Huang, Lee, and Sudakov~\cite{huang2012bandwidth} showed that by respecting the local resilience condition one can ensure that as many as $\Omega(p^{-2})$ vertices are not contained in a triangle. Note that the `leftover' given by Theorem~\ref{thm:main-theorem} is $\eps N$, which for $k = 2$ and the conjectured appearance threshold $p \geq N^{-1/2}$ would correspond to the optimal $O(p^{-2})$. However, as our bound on $p$ is slightly larger, this leaves a small gap with respect to the result of Allen et al.~\cite{allen2016bandwidth}.

The paper is organised as follows. In Section~\ref{sec:preliminaries} we establish an important property concerning subgraph counting in random graphs. In Section~\ref{sec:sparse-regularity} we introduce the sparse regularity lemma and a few essential notions that are extensively used throughout the paper. Furthermore, we give a simple lemma (Lemma~\ref{lem:regularity-partitioning}) which allows us to refine an $(\eps, p)$-regular partition into one in which the number of partition classes depends on $N$. In Section~\ref{sec:main-proof} we give a series of technical results which lead to the proof of our main tool (Lemma~\ref{lem:clique-expansion-lemma}) regarding expansion of cliques, and subsequently show how to combine everything in order to give a proof of our main result.

\section{Preliminaries}\label{sec:preliminaries}

For an integer $n$ we write $[n] = \{1, \ldots, n\}$. For $a, b, c, d \in \R$, we write $a = (b \pm c)d$ to denote that $a$ lies in the interval $((b - c)d, (b + c)d)$. Additionally, given $k \in \N$ we let $(a \pm b)^k \subseteq (c \pm d)$ stand for $(c - d) \leq (a - b)^k, (a + b)^k \leq (c + d)$. We omit floors and ceilings whenever they are not crucial.

We use standard graph theory notation following the one from~\cite{bondy2008graphtheory}. In particular, given a graph $G$, we denote by $V(G)$ and $E(G)$ the set of its vertices and edges respectively, and write $v(G) = |V(G)|$ and $e(G) = |E(G)|$. For two (not necessarily disjoint) sets $X, Y \subseteq V(G)$ we denote by $E(X, Y) := \{ \{x, y\} \in E(G) \mid x \in X, y \in Y\}$ the set of edges with one endpoint in $X$ and the other in $Y$, and let $e(X, Y) = |E(X, Y)|$. Furthermore, we denote by $N_G(X, Y)$ the common neighbourhood of the vertices from $X$ in $Y$, i.e.\ $N_G(X, Y) := \{y \in Y \mid \forall x \in X \colon \{x, y\} \in E(G)\}$. We abbreviate $N_G(\{x\}, Y)$ to $N_G(x, Y)$ and write $N_G(X)$ for $N_G(X, V(G))$. We omit the subscript $G$ whenever it is clear from the context. We define the density of a pair $(X, Y)$ to be $d(X, Y) := e(X, Y)/(|X||Y|)$. For $t \geq 2$ pairwise disjoint subsets of vertices $V_1, \ldots, V_t \subseteq V(G)$, we let $G[V_1, \ldots, V_t]$ denote the $t$-partite graph induced by those subsets.

For an integer $\ell \geq 2$ we say that a path $P_\ell$ has \emph{length} $\ell$ if $P_\ell$ consists of $\ell$ vertices. We define the $k$-th power of a path, for short a $k$-\emph{path}, as the graph obtained by adding an edge between any two vertices at distance at most $k$ on the path. We define a $k$-\emph{cycle} analogously and write $P^{k}$ and $C^{k}$ for a $k$-path and $k$-cycle, respectively.

We also use the standard asymptotic notation $o, O, \omega$, and $\Omega$. Throughout the paper $\log$ denotes the natural logarithm. Finally, we often use subscripts with constants such as $C_{3.5}$ to indicate that $C_{3.5}$ is a constant given by Claim/Lemma/Proposition/Theorem 3.5.

\subsection{Subgraph counting in random graphs}
Let $H$ be a graph on the vertex set $\{v_1, \ldots, v_t\}$. We write $G(H, n, p)$ for a random graph on the vertex set $V = V_1 \cup \ldots \cup V_t$ where $|V_i| = n$, for all $i \in [t]$, and $\Pr[\{u, v\} \in E(G)] = p$ if $v \in V_i$ and $u \in V_j$ for some $\{v_i, v_j\} \in E(H)$. Let $X_H(n,p)$ be a random variable denoting the number of copies of $H$ in $G(H, n, p)$. For $H = K_t$, DeMarco and Kahn~\cite{demarco2012tight} showed exponential tail bounds on $X_H(n, p)$.
\begin{theorem}[Theorem 2.3 in~\cite{demarco2012tight}]\label{thm:DMK}
  Let $t \geq 2$ be an integer, $\delta$ a positive real, and $p \geq n^{-2/(t - 1)}$. Then there exists a constant $c(\delta, t) > 0$, such that
  \[
    \Pr \big[ X_{K_t}(n, p) \geq (1 + \delta)\E[X_{K_t}(n, p)] \big] \leq e^{-c \cdot \min\{n^2p^{t - 1}\log{(1/p)}, n^{t}p^{\binom{t}{2}}\}}.
  \]
\end{theorem}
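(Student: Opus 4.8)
The two terms in the minimum are the costs of the two cheapest ways of forcing $X_{K_t}(n,p)\ge(1+\delta)\E[X_{K_t}(n,p)]$, and I would structure the proof around the regime in which each dominates. First, \emph{clustering}: planting a clique on a set of $w\asymp np^{(t-1)/2}$ vertices creates $\binom wt\asymp\delta\,\E[X_{K_t}]$ new copies of $K_t$ inside that set while prescribing only $\binom w2\asymp n^2p^{t-1}$ pairs to be edges, an event of probability $p^{\binom w2}=e^{-\Theta(n^2p^{t-1}\log(1/p))}$. Second, \emph{Poisson fluctuation}: when $\E[X_{K_t}]\asymp n^tp^{\binom t2}$ is itself below $n^2p^{t-1}\log(1/p)$ the count behaves like a Poisson variable of that mean, whose upper tail is $e^{-\Theta_\delta(\E[X_{K_t}])}$. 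Accordingly I split into the \emph{clustering regime} $n^tp^{\binom t2}\ge n^2p^{t-1}\log(1/p)$, where the goal is $\Pr[X\ge(1+\delta)\E X]\le e^{-c\,n^2p^{t-1}\log(1/p)}$, and the \emph{Poisson regime}, where the goal is $e^{-c\,\E X}$; the hypothesis $p\ge n^{-2/(t-1)}$ says exactly that $n^2p^{t-1}\ge1$ (equivalently $\E[X_{K_t}]\ge1$), so the asserted bound is non-vacuous.

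\textbf{The clustering regime.} Call an edge of $G$ \emph{heavy} if it lies in at least $K\,n^{t-2}p^{\binom t2-1}$ copies of $K_t$, with $K=K(\delta,t)$ a large constant. On the event $\{X\ge(1+\delta)\E X\}$ I would run a dichotomy. Case (a): copies through heavy edges account for at least a $\delta/2$-fraction of the excess. Then a counting step (Kruskal--Katona / convexity) extracts a vertex set $W$ with $|W|\asymp np^{(t-1)/2}$ inducing $\Omega(|W|^t)$ copies of $K_t$, hence with $\Omega(|W|^2)$ edges present among the $\binom{|W|}2$ relevant pairs. For a \emph{fixed} such $W$ this has probability at most $e^{-\Omega(|W|^2\log(1/p))}=e^{-\Omega(n^2p^{t-1}\log(1/p))}$ (valid for $p$ bounded away from $1$), and summing over the $\le n^{|W|}=e^{O(np^{(t-1)/2}\log n)}$ choices of $W$ costs only a constant factor in the exponent, because in the clustering regime $np^{(t-1)/2}\log(1/p)\gg\log n$. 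Case (b): the excess survives deleting all copies through heavy edges, so it is carried by light edges. Here I would reveal the edges one at a time and apply a Freedman/Bernstein martingale inequality: on the `light' event each increment is $O(n^{t-2}p^{\binom t2-1})$ and the predictable quadratic variation is $O\big(p\cdot n^{t-2}p^{\binom t2-1}\cdot\E X\big)$, giving deviation probability $e^{-\Omega(n^2p)}$, which for $t\ge3$ is at most $e^{-\Omega(n^2p^{t-1}\log(1/p))}$ since $p^{t-2}\log(1/p)\le1$. The base case $t=2$ is the Chernoff bound for $e(G)=X_{K_2}$.

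\textbf{The Poisson regime.} Here the target $e^{-\Omega(\E X)}$ is the `expected' behaviour of a sum of weakly dependent indicators. Since $p$ is small enough that typical co-degrees concentrate and atypically dense spots are exponentially unlikely on the scale $\E X$, one obtains it either by a second-moment / Poisson-approximation argument (deleting the $o(\E X)$ copies that meet an unusually clique-rich spot and applying a Janson-type upper-tail estimate to the rest), or again by the martingale bound above, whose $e^{-\Omega(n^2p)}$ is $\le e^{-\Omega(\E X)}$ throughout this regime. This part is essentially classical.

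\textbf{Main obstacle.} The crux is the clustering-regime dichotomy, in two respects. Quantitatively, one must show that whenever $X\ge(1+\delta)\E X$ the heavy edges either genuinely localise into a clique-rich set of the \emph{correct} size $\Theta(np^{(t-1)/2})$, or can be stripped away leaving a configuration with bounded co-degrees \emph{and} small enough quadratic variation for the martingale bound to close with \emph{no} logarithmic loss --- a $\log(1/p)$ loss here would be fatal. Technically, the increment bound needed in case (b) refers to the `light' event, which is not measurable with respect to any prefix of the edge exposure; reconciling this (via a random revelation order together with a stopping-time argument, or equivalently a careful truncation) is the standard but delicate point. Making these two halves fit together over the whole range $p\ge n^{-2/(t-1)}$ is precisely the content of DeMarco--Kahn; the union bound in case (a), the Chernoff base case, and the Poisson-regime estimate are comparatively routine.
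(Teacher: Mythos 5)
This statement is not proved in the paper at all: Theorem~\ref{thm:DMK} is imported verbatim from DeMarco and Kahn~\cite{demarco2012tight} and used as a black box (only Proposition~\ref{prop:counting-cliques} is derived from it here), so there is no in-paper proof to compare your argument against. Judged on its own, your write-up correctly identifies the two-regime structure (planted-clique/clustering cost $n^2p^{t-1}\log(1/p)$ versus the Poisson-type cost $\E[X_{K_t}]$), the extremal mechanism behind each term, and even the correct scale $|W|\asymp np^{(t-1)/2}$ for the planted clique; the union-bound bookkeeping in case (a) and the reduction $p^{t-2}\log(1/p)=O(1)$ in case (b) are fine.

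However, as a proof there is a genuine gap, and it sits exactly where you place your ``main obstacle.'' The step ``a counting step (Kruskal--Katona / convexity) extracts a vertex set $W$ with $|W|\asymp np^{(t-1)/2}$ inducing $\Omega(|W|^t)$ copies'' is not a routine consequence of Kruskal--Katona: copies passing through heavy edges need not localise on a single set of that size, and showing that an excess of $\delta\E[X_{K_t}]$ copies forces such a near-clique (or can be stripped down to a bounded-codegree configuration amenable to a concentration bound with no logarithmic loss) is precisely the content of DeMarco--Kahn's paper, carried out there by an induction on $t$ with a careful core/seed-counting argument rather than by the heavy/light dichotomy you describe. Two further technical points undermine the dichotomy as stated: the heavy threshold $K n^{t-2}p^{\binom t2-1}$ is below $1$ for $p$ close to $n^{-2/(t-1)}$ even inside your clustering regime, so ``light'' edges then carry no copies and case (b) is vacuous while case (a) must do all the work; and in case (b) the increment bound is only valid on a non-adapted event, so the Freedman argument needs the truncation/stopping-time repair you mention but do not supply (and the analogous issue recurs in your Poisson regime, where an $e^{-c\E X}$ upper tail is likewise not ``essentially classical'' without it). In short, the proposal is a sound roadmap that names the right obstacles, but the decisive localisation step is asserted rather than proved, so it does not yet constitute a proof of Theorem~\ref{thm:DMK}.
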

An easy consequence of Theorem~\ref{thm:DMK} and a union bound is that with high probability all suitably large $t$-partite subgraphs of a random graph $\GNp$ contain not many more than the expected number of copies of $K_t$.
\begin{proposition}\label{prop:counting-cliques}
  Let $t \geq 2$ be an integer. Then for all $\eps > 0$, there exist positive constants $C(\eps, t)$ and $c(\eps, t)$, such that for $p \in (0, 1)$ and every integer $n \ge C p^{-t + 1} \log N$ a random graph $G \sim \GNp$ has the following property with probability at least $1- e^{-c \cdot n^2 p^{t-1}}$. Suppose $S_1, \ldots, S_t \subseteq V(G)$ are pairwise disjoint sets of size $|S_i| \geq n$, then $G[S_1, \ldots, S_t]$ contains at most $(1 + \eps) (\prod_{i = 1}^{t} |S_i|) p^{e(K_t)}$ copies of $K_t$.
\end{proposition}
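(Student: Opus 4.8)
The plan is to reduce the statement for all large $t$-partite subgraphs to the single random variable $X_{K_t}(n,p)$ controlled by Theorem~\ref{thm:DMK}, and then absorb the union bound over the choice of sets into the exponential decay. Let me think about the details.

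First, observe that it suffices to prove the bound when $|S_i| = n$ exactly for every $i$: if $G[S_1,\dots,S_t]$ has more than $(1+\eps)\prod|S_i'|\,p^{e(K_t)}$ copies of $K_t$ for some larger sets $S_i' \supseteq S_i$, then... wait, that's the wrong direction. Actually monotonicity goes the right way here — more vertices means more cliques — so I need to be careful. The clean reduction: it suffices to show that w.h.p. every choice of sets of size *exactly* $n$ spans at most $(1+\eps/2)n^t p^{e(K_t)}$ copies of $K_t$. Then for arbitrary $S_i$ with $|S_i| = n_i \ge n$, partition (greedily, with overlaps allowed) $S_i$ into $\lceil n_i/n\rceil$ blocks of size $n$; every $K_t$ with one vertex in each $S_i$ lies in some product of blocks, so the total count is at most $\prod_i \lceil n_i/n\rceil \cdot (1+\eps/2)n^t p^{e(K_t)} \le (1+\eps)\prod_i n_i \cdot p^{e(K_t)}$, using $\lceil n_i/n \rceil \le 2n_i/n$ and adjusting constants (choosing $n$ large enough that $\lceil n_i/n\rceil/ (n_i/n) \le 1 + \eps/(4t)$ is not quite possible for $n_i$ close to $n$, so one instead writes $\lceil n_i/n\rceil n^t \le (n_i + n)^t \cdot$ stuff — I'd just take $2^t$ into the constant and re-scale $\eps$, which is harmless since $t$ is fixed).

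Second, fix sets $S_1,\dots,S_t$ of size exactly $n$. Since the edges of $\GNp$ within and between the $S_i$ are independent $\mathrm{Ber}(p)$ variables, the number of copies of $K_t$ in $G[S_1,\dots,S_t]$ has exactly the distribution of $X_{K_t}(n,p)$ (here we only count "rainbow" cliques, one vertex per part, which is precisely what $X_{K_t}(n,p)$ counts in $G(K_t,n,p)$). Its expectation is $n^t p^{\binom{t}{2}}$. The hypothesis $n \ge C p^{-t+1}\log N$ gives in particular $p \ge n^{-2/(t-1)}$ once $C$ is large (indeed $p^{t-1} \ge C^{-(t-1)} n^{-1}(\log N)^{-(t-1)} \ge n^{-2}$ for $n$ large), so Theorem~\ref{thm:DMK} applies with $\delta = \eps/2$ and yields
\[
  \Pr\big[X_{K_t}(n,p) \ge (1+\eps/2)n^t p^{\binom{t}{2}}\big] \le e^{-c' \min\{n^2 p^{t-1}\log(1/p),\, n^t p^{\binom{t}{2}}\}}.
\]
Now $n^t p^{\binom{t}{2}} = n^2 p^{t-1} \cdot (n^{t-2} p^{\binom{t}{2}-(t-1)}) = n^2 p^{t-1}\cdot (np^{(t-1)/2})^{t-2} \ge n^2 p^{t-1}$ since $np^{(t-1)/2} \ge (np^{t-1})^{1/2} \cdot p^{(t-1)/2 \cdot \dots}$ — more simply, $np^{t-1} \ge C(\log N)^{t-1} \ge 1$, hence $n^{t-2}p^{\binom t2 - (t-1)}$... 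I'd just check directly that both terms in the minimum are $\Omega(n^2 p^{t-1})$, using $np^{t-1} \gg 1$ and $\log(1/p) \ge $ const (if $p$ is bounded away from $1$; if $p$ is close to $1$ the first term is replaced by noting the whole statement is trivial). So the failure probability for one fixed tuple is at most $e^{-2c n^2 p^{t-1}}$ for a suitable $c$.

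Third, take a union bound over all $\binom{N}{n}^t \le N^{tn} = e^{tn\log N}$ choices of the ordered tuple $(S_1,\dots,S_t)$. The total failure probability is at most $e^{tn\log N - 2cn^2 p^{t-1}}$. Since $n \ge C p^{-t+1}\log N$, we have $n^2 p^{t-1} \ge n \cdot C\log N$, so $2cn^2 p^{t-1} \ge 2cC n\log N \ge 2tn\log N$ provided $C \ge t/c$; thus the exponent is at most $-c n^2 p^{t-1} - (tn\log N) \le -c n^2 p^{t-1}$, giving the claimed probability bound $1 - e^{-c n^2 p^{t-1}}$ after relabeling $c$.

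The main obstacle is purely bookkeeping: verifying that the minimum in Theorem~\ref{thm:DMK} is governed by the $n^2 p^{t-1}\log(1/p)$ term (equivalently that $n^t p^{\binom t2} \ge n^2 p^{t-1}$, which follows from $np^{t-1}\ge 1$ but needs the exponent arithmetic done carefully), and that this same quantity dominates the entropy term $tn\log N$ from the union bound — both of which are exactly what the hypothesis $n \ge Cp^{-t+1}\log N$ is designed to guarantee. The reduction from arbitrary-size sets to exact-size-$n$ sets is standard and costs only a constant factor depending on $t$, absorbed by shrinking $\eps$.
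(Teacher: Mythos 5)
Your second and third steps (fixing a tuple of equal-size sets, applying Theorem~\ref{thm:DMK} with a small $\delta$, checking $p \ge n^{-2/(t-1)}$ and that the minimum in the exponent is $\Omega(n^2p^{t-1})$, then beating the $e^{tn\log N}$ entropy term using $n \ge Cp^{-t+1}\log N$) are exactly the paper's argument and are fine. The genuine gap is in your first step, the reduction from sets of size $|S_i| \ge n$ to sets of size exactly $n$. Covering each $S_i$ by $\lceil |S_i|/n\rceil$ blocks of size $n$ gives at best $\prod_i \lceil |S_i|/n\rceil\, n^t \le \prod_i (|S_i|+n)$, which for $|S_i|$ comparable to $n$ loses a factor as large as $2^t$; your displayed inequality $\prod_i\lceil n_i/n\rceil (1+\eps/2)n^t p^{e(K_t)} \le (1+\eps)\prod_i n_i\, p^{e(K_t)}$ already fails for $n_i = n+1$. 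You notice this, but the proposed repair --- ``take $2^t$ into the constant and re-scale $\eps$'' --- is not available: $\eps$ is prescribed in the conclusion $(1+\eps)\prod_i|S_i|p^{e(K_t)}$, and the only constants you may choose, $C$ and $c$, enter the hypothesis on $n$ and the probability bound, not the clique-count bound. No choice of the auxiliary accuracy in your fixed-size statement removes the rounding loss, which is at least a factor $2$ per part whenever some $|S_i|$ lies strictly between $n$ and $2n$.

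The paper's fix is to run your fixed-size argument at a \emph{smaller} scale: it applies Theorem~\ref{thm:DMK} to sets of size $\tilde n = \lceil \delta n\rceil$, where $(1+\delta)^{t+1}=1+\eps$, pads each $S_i$ (of size at least $n \ge \delta^{-1}\tilde n$) to a multiple of $\tilde n$ --- which now costs only a factor $1+\delta$ per part --- and partitions into blocks of size $\tilde n$; the hypothesis $n \ge Cp^{-t+1}\log N$ with $C$ large compared to $1/\delta$ keeps both the application of Theorem~\ref{thm:DMK} and the union-bound arithmetic valid at scale $\tilde n$. (Alternatively, you could dispense with block decompositions and union bound over every exact size $m \in [n,N]$, since $m^2p^{t-1} \ge m\, n\, p^{t-1} \ge Cm\log N$ dominates $tm\log N$ for all such $m$; but as written your reduction does not yield the stated $(1+\eps)$ bound.)
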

\begin{proof}
  Let $\delta > 0$ satisfy $(1 + \delta)^{t + 1} = 1 + \eps$ and take $\tilde c = c_{\ref{thm:DMK}}(\delta, t)$, $c = \tilde c \delta^2/3$, and $C = 3t/(\delta \tilde c)$. Set $\tilde n = \ceil{\delta n}$. Consider $t$ pairwise disjoint sets each of size $\tilde n$. By Theorem~\ref{thm:DMK} the probability that $\GNp$ is such that these sets induce more than $(1 + \delta) \tilde n^t p^{e(K_t)}$ copies of $K_t$ is bounded by $e^{-\tilde c \cdot \tilde n^2 p^{t - 1}}$, where we used that the definition of $\tilde n$ implies $\min\{ \tilde n^2 p^{t - 1} \log(1/p), \tilde n^{t} p^{\binom{t}{2}} \} \geq \tilde n^2 p^{t - 1}$ (for $t = 2$ this term is sharp). Indeed, we may apply Theorem~\ref{thm:DMK} since by the assumption on $n$ we have
  \[
    p \geq \Big( \frac{C\log N}{n} \Big)^{1/(t - 1)} \geq \Big( \frac{C \delta \log N}{\tilde n} \Big)^{1/(t - 1)} \geq \tilde n^{-2/(t - 1)},
  \]
  with room to spare.
  Applying a union bound argument over all the choices for such sets, we thus see that the probability of $\GNp$ containing any such family of sets is at most
  \[
    {\binom{N}{\tilde n}}^{t} e^{-\tilde c \cdot \tilde n^2 p^{t - 1}} \leq e^{t \tilde n\log{N}} \cdot e^{-\tilde c \cdot \tilde n^2 p^{t - 1}} \leq e^{- (\tilde c/2) \tilde n^2 p^{t-1}} \le e^{-c n^2 p^{t-1}},
  \]
  where the last two inequalities follow from the definition of $\tilde n$, $C$, and $c$. In the following we condition on the event that $\GNp$ does not contain such a family of sets.

  Let $S_1, \ldots, S_t \subseteq V(G)$ be subsets which satisfy the assumptions of the proposition. Note that by our choice of $\tilde n$ we have $|S_i| \geq \delta^{-1} \tilde n$ for all $i \in [t]$. Let $\tilde S_1, \ldots, \tilde S_t$ be sets obtained by adding vertices to $S_1, \ldots, S_t$ until $|\tilde S_i|$ is divisible by $\tilde n$, for every $i \in [t]$. Observe that $|\tilde S_i| < |S_i| + \tilde n \le (1 + \delta) |S_i|$. We partition every $\tilde S_i$ arbitrarily into $k_i := |\tilde S_i| / \tilde n $ sets of size $\tilde n$. As we conditioned on the fact that the number of copies of $K_t$ in sets of size $\tilde n$ does not exceed the expectation by more than a factor of $\delta$, we deduce that the number of such copies induced by $S_1, \ldots, S_t$ is at most
  \begin{align*}
    \bigg( \prod_{i = 1}^t k_i \bigg) \cdot (1 + \delta) \tilde n^t p^{e(K_t)} &\le (1 + \delta) \bigg( \prod_{i = 1}^{t} k_i \tilde n \bigg) \cdot p^{e(K_t)} \le (1 + \delta) \bigg( \prod_{i = 1}^{t} (|S_i| + \tilde n) \bigg) \cdot p^{e(K_t)} \\
    &\le (1 + \delta)^{t + 1} \prod_{i = 1}^{t} |S_i| \cdot p^{e(K_t)} \le (1 + \eps) \prod_{i = 1}^{t} |S_i| \cdot p^{e(K_t)},
  \end{align*}
  as claimed.
\end{proof}

\section{Sparse regularity lemma}\label{sec:sparse-regularity}

For disjoint subsets $V_1, V_2 \subseteq V(G)$, the pair $(V_1, V_2)$ is said to be $(\eps, p)$-\emph{regular} if $|d(V_1', V_2') - d(V_1, V_2)| \leq \eps p$, for all $V_1' \subseteq V_1$ and $V_2' \subseteq V_2$ such that $|V_1'| \geq \eps|V_1|$ and $|V_2'| \geq \eps|V_2|$. A $t$-tuple $(V_1, \ldots, V_t)$ is said to be $(\eps, p)$-\emph{regular} if $(V_i, V_j)$ forms an $(\eps, p)$-regular pair for all $1 \leq i < j \leq t$. The following simple hereditary property of regular pairs follows directly from the definition.
\begin{proposition}\label{prop:large-subsets-inheritance}
  Let $0 < \eps_1 < \eps_2 \leq 1/2$ be constants. Let $(V_1, V_2)$ be an $(\eps_1, p)$-regular pair for some $0 < p < 1$ and let $V_i' \subseteq V_i$ be arbitrary subsets such that $|V_i'| \geq \eps_2|V_i|$. Then $(V_1', V_2')$ is an $(\eps_1/\eps_2, p)$-regular pair of density $d(V_1, V_2) \pm \eps_1p$.
\end{proposition}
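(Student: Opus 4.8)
The plan is to verify both assertions directly from the definition of $(\eps_1, p)$-regularity, using only the inequalities $\eps_1 < \eps_2 \leq 1/2$. First I would record the density estimate. Since $\eps_1 < \eps_2$, the subsets $V_1', V_2'$ are already large enough to be admissible test sets for the original pair: $|V_i'| \geq \eps_2 |V_i| \geq \eps_1 |V_i|$ for $i = 1, 2$. Applying the definition of $(\eps_1, p)$-regularity of $(V_1, V_2)$ with the pair $(V_1', V_2')$ itself therefore yields $|d(V_1', V_2') - d(V_1, V_2)| \leq \eps_1 p$, which is exactly the claimed bound $d(V_1', V_2') = d(V_1, V_2) \pm \eps_1 p$.

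Next I would handle the regularity of the restricted pair. Take arbitrary $W_1 \subseteq V_1'$ and $W_2 \subseteq V_2'$ with $|W_i| \geq (\eps_1/\eps_2)|V_i'|$. The key computation is that $|W_i| \geq (\eps_1/\eps_2)|V_i'| \geq (\eps_1/\eps_2)\cdot \eps_2 |V_i| = \eps_1 |V_i|$, so $(W_1, W_2)$ is again an admissible test pair for $(V_1, V_2)$, giving $|d(W_1, W_2) - d(V_1, V_2)| \leq \eps_1 p$. Combining this with the first step through the triangle inequality gives $|d(W_1, W_2) - d(V_1', V_2')| \leq 2\eps_1 p$. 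Finally, since $\eps_2 \leq 1/2$ we have $2 \leq 1/\eps_2$, hence $2\eps_1 p \leq (\eps_1/\eps_2)p$, so the pair $(V_1', V_2')$ satisfies the definition of $(\eps_1/\eps_2, p)$-regularity.

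There is essentially no obstacle here; it is a routine unwinding of definitions. The only two points that need attention are bookkeeping points: the hypothesis $\eps_1 < \eps_2$ is precisely what lets $V_i'$ serve as a ``large subset'' of $V_i$ in the first step, and the slack $\eps_2 \leq 1/2$ is exactly what is needed to absorb the factor $2$ produced by the triangle inequality when passing from comparison with $d(V_1, V_2)$ to comparison with $d(V_1', V_2')$.
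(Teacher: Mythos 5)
Your proof is correct and is exactly the routine argument the paper has in mind when it states that the proposition ``follows directly from the definition'' (the paper gives no further proof): test sets of the restricted pair are admissible test sets of the original pair, and the factor $2$ from the triangle inequality is absorbed using $\eps_2 \leq 1/2$. Nothing is missing.
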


A partition $(V_i)_{i = 0}^{k}$ of the vertex set $V(G)$ is called an $(\eps, p)$-\emph{regular partition} if: the exceptional set $V_0$ is of size at most $\eps n$, sets $V_1, \ldots, V_k$ have equal sizes, and all but at most $\eps k^2$ many pairs $(V_i, V_j)$ are $(\eps, p)$-regular. The original sparse regularity lemma due to Kohayakawa and R\"{o}dl~\cite{kohayakawa1997szemeredi, kohayakawa2003szemeredi}, required the graph to fulfil a certain density condition. Here we state a variant recently given by Scott~\cite{scott2011szemeredi}, where such a condition is not necessary.
\begin{theorem}[Sparse regularity lemma,~\cite{scott2011szemeredi}]\label{thm:SRL}
  For any $\eps > 0$ and $m \geq 1$, there exists a constant $M(\eps, m) \geq m$ such that for any $p \in [0, 1]$, any graph $G$ on at least $M$ vertices of density $p$ admits an $(\eps, p)$-regular partition $(V_i)_{i = 0}^{k}$ with exceptional class $V_0$ such that $m \leq k \leq M$.
\end{theorem}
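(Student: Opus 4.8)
The plan is to run the standard energy-increment (index-increment) argument that underlies every version of Szemer\'{e}di's regularity lemma, and to pay attention to the one point where the density-condition-free sparse setting of Scott~\cite{scott2011szemeredi} departs from the dense original. For a partition $\cP = (V_i)_{i=0}^{k}$ of $V(G)$ into an exceptional class $V_0$ and classes $V_1,\dots,V_k$ of equal size, and a truncation level $q = q(\eps)$ chosen as a fixed large multiple of $p$, I would track the \emph{capped index}
\[
  \operatorname{ind}(\cP) \;=\; \frac{1}{n^2} \sum_{1 \le i, j \le k} |V_i|\,|V_j| \cdot \min\bigl\{ d(V_i, V_j),\, q \bigr\}^2 .
\]
Two properties make this the right quantity: it is bounded, $0 \le \operatorname{ind}(\cP) \le q^2 = O(p^2)$ with an absolute constant, so $\operatorname{ind}$ lives in a window of width $O(p^2)$ no matter how small $p$ is; and, up to a loss that must be argued to be negligible over the whole run, refining a partition does not decrease $\operatorname{ind}$, by convexity of $x \mapsto x^2$ together with the defect form of the Cauchy--Schwarz inequality.

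The iteration is then the familiar one. Start from an arbitrary equitable partition into $m$ parts (plus a tiny exceptional class, which is why the hypothesis $n \ge M$ is needed); given $\cP$, stop if at most $\eps k^2$ pairs $(V_i, V_j)$ fail to be $(\eps, p)$-regular; otherwise, for each irregular pair pick witnessing subsets $V_i' \subseteq V_i$, $V_j' \subseteq V_j$ with $|V_i'| \ge \eps |V_i|$, $|V_j'| \ge \eps |V_j|$ and $|d(V_i', V_j') - d(V_i, V_j)| > \eps p$, pass to the common refinement generated by all of these subsets, then re-equitabilise and sweep the overflow into $V_0$. The defect Cauchy--Schwarz estimate shows that each such round raises $\operatorname{ind}$ by $\Omega(\eps^5 p^2)$, so, since $\operatorname{ind}$ ranges over an interval of length $O(p^2)$, the process terminates after a number of rounds bounded purely in terms of $\eps$; iterating this yields a tower-type bound $M = M(\eps, m)$ on the number of parts, and the usual accounting (taking $m$, hence the initial parts, large and controlling the per-round overflow) keeps $|V_0| \le \eps n$ throughout, exactly as in the dense regularity lemma.

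The single genuinely sparse issue — and the step I expect to be the main obstacle — is that, with no upper-density (``upper-uniformity'') hypothesis on $G$, a few pairs can have density close to $1$; this is exactly why the cap $\min\{\cdot, q\}$ is indispensable, since the untruncated index $\tfrac1{n^2}\sum |V_i||V_j| d(V_i,V_j)^2$ can be as large as $\Theta(p)$ rather than $O(p^2)$ (for instance when $G$ is a clique on $\Theta(\sqrt p\, n)$ vertices together with isolated vertices), which would make the number of rounds, and hence $M$, depend on $p$. Following Scott, the way out is that such ``dense'' pairs are \emph{rare}: from $\sum_{i,j} |V_i|\,|V_j|\,d(V_i,V_j) = \sum_{i,j} e(V_i,V_j) \le 2 e(G) = p\,n(n-1)$ one gets that the pairs with $d(V_i,V_j) > q$ have total normalised weight at most $p/q$, so choosing $q$ to be a sufficiently large multiple of $p/\eps$ places all of them inside the $\eps k^2$ quota of pairs one is permitted to declare irregular. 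These pairs are therefore never refined, and the index one actually increments is supported on pairs with $d(V_i,V_j) \le q$, where the cap is inactive and the classical convexity/defect argument delivers the $\Omega(\eps^5 p^2)$ gain essentially unchanged. What remains is bookkeeping, but it is the part that needs real care: refining a non-dense pair can create sub-pairs of density exceeding $q$, which are then truncated in the index, so a round may also incur a loss; one must verify — again via the edge count $\sum_{i,j} e(V_i,V_j) \le 2e(G)$, which limits the total weight of density-$>q$ pairs that can ever arise — that this loss does not overwhelm the per-round gain over the bounded number of rounds, and that the pairs set aside this way, together with the originally irregular ones, stay within the permitted $\eps k^2$ at every round. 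With that in hand, maintaining equal part sizes and $|V_0| \le \eps n$ is routine, and the resulting $M(\eps,m)$ depends only on $\eps$ and $m$, not on $p$ or $n$, as required.
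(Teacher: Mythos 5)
A preliminary remark: the paper does not prove Theorem~\ref{thm:SRL} at all --- it is imported verbatim from Scott~\cite{scott2011szemeredi} --- so there is no internal proof to compare against, and I am judging your sketch on its own terms. Your overall route (energy increment relative to $p$, with the pairs of density above a cutoff $q$ shown to be rare via the edge count and absorbed into the $\eps k^2$ irregular quota) is the right one in spirit, and your identification of where the sparse, density-assumption-free setting bites is accurate.

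The genuine gap is exactly the step you defer as ``bookkeeping that needs real care'', and the tool you propose for it does not suffice. The capped potential $\min\{d,q\}^2$ is not convex (its slope drops from $2q$ to $0$ at $d=q$), so the capped index is truly not monotone under refinement: a pair of density $q$ whose edges concentrate on a quarter of its sub-pairs refines to capped contribution $q^2/4$, losing a constant fraction of that pair's energy. Worse, the loss in one round is controlled only by the weighted sum of $d_{UW}^2$ over newly dense sub-pairs, and since $d_{UW}$ may be as large as $1$, the edge count $\sum e(V_i,V_j)\le 2e(G)$ only bounds this by roughly $p$ times the fraction of edges lying in spots of density above $q$ --- whereas the whole index lives in a window of size $O(p^2/\eps)$ and a round gains only $\Omega(\eps^5 p^2)$. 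Knowing that dense pairs have total normalised weight at most $p/q$ bounds their \emph{measure}, not their $d^2$-mass, so for small $p$ the potential per-round loss can swamp both the gain and the entire budget, and the iteration as described need not terminate after a $p$-independent number of rounds. One clean repair --- and the reason the density-free statement is not a routine adaptation of the dense proof --- is to replace $\min\{x,q\}^2$ by the \emph{convex} truncation of the square: $x^2$ for $x\le q$, continued by its tangent $2qx-q^2$ for $x>q$. This function is convex, so the index is monotone under refinement and there is no loss term to control; since it is at most $2qx$, the index is at most $2qp=O(p^2/\eps)$ by the edge count; and it agrees with $x^2$ below the threshold, so the defect Cauchy--Schwarz gain of $\Omega(\eps^5p^2)$ survives for irregular pairs of density at most, say, $q/2$, the remaining pairs being rare enough to be declared irregular outright. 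With that modification your outline closes; without it, the loss estimate is a real hole rather than bookkeeping.
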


Our proof strategy requires the graph to have a particularly `nice' regular partition, namely one in which we can control the density between regular pairs. Such a statement follows from Theorem~\ref{thm:SRL} by standard arguments; a proof can be found e.g.~in \cite{noever2017local}.
\begin{corollary}[Corollary~2.6 in~\cite{noever2017local}]\label{cor:nice-regularity}
  For every $\mu, \nu, \eps > 0$ and $m \geq 1$, there exist $d(\nu)$ and $M(\eps, m) \geq m$ such that for $p = \omega(1/N)$, with probability at least $1 - e^{-\Omega(N^2p)}$ the following holds. Every spanning subgraph $G \subseteq \GNp$ with minimum degree $\delta(G) \geq (\mu + \nu) Np$ contains a partition of the vertices $V(G) = V_0 \cup V_1 \cup \ldots \cup V_k$, where $m \leq k \leq M$, such that $|V_0| \leq \eps N$, $|V_1| = \ldots = |V_k|$, and such that for every $i$ there exist at least $\mu k$ indices $j \in [k] \setminus \{i\}$ with $G[V_i, V_j]$ being an $(\eps, p)$-regular pair of density $dp$.
\end{corollary}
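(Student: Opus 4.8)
The plan is to derive the corollary from the sparse regularity lemma (Theorem~\ref{thm:SRL}) by a purely deterministic argument, after conditioning on a single likely event about densities of large vertex subsets of $\GNp$. Fix auxiliary parameters $\eps_1>0$ sufficiently small and $m_1$ sufficiently large (both depending only on $\eps,\mu,\nu,m$; in particular we will use $m_1\ge 2m$ and $m_1\ge 20/(\nu(\mu+\nu))$), set $M:=M_{\ref{thm:SRL}}(\eps_1,m_1)$, $\beta:=1/(2M)$ and $d:=\nu/4$. Call $\GNp$ \emph{good} if every pair of disjoint sets $A,B\subseteq[N]$ with $|A|,|B|\ge\beta N$ has $e(A,B)\le(1+\eps_1)p|A||B|$, and every $A\subseteq[N]$ with $|A|\ge\beta N$ has $e(A)\le(1+\eps_1)p\binom{|A|}{2}$. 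A Chernoff bound gives failure probability $e^{-\Omega(\eps_1^2\beta^2 N^2 p)}=e^{-\Omega(N^2p)}$ for each fixed pair/set, and a union bound over the at most $4^N$ choices shows $\GNp$ is good with probability $1-e^{-\Omega(N^2p)}$; the hypothesis $p=\omega(1/N)$ is exactly what makes the exponent $-\Omega(N^2p)$ beat the $4^N=e^{O(N)}$ factor. From now on condition on this event and fix an arbitrary spanning $G\subseteq\GNp$ with $\delta(G)\ge(\mu+\nu)Np$, noting that all densities of $G$ between (or inside) sets of size $\ge\beta N$ are then also at most $(1+\eps_1)p$.

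Apply Theorem~\ref{thm:SRL} to $G$ (with parameter $p$, parameters $\eps_1,m_1$) to get an $(\eps_1,p)$-regular partition $V(G)=V_0'\cup V_1'\cup\dots\cup V_{k_1}'$ with $m_1\le k_1\le M$, $|V_0'|\le\eps_1 N$, and $|V_1'|=\dots=|V_{k_1}'|=:t\ge(1-\eps_1)N/k_1\ge\beta N$. For each $i$, sum degrees over $V_i'$:
\[
 t(\mu+\nu)Np\ \le\ \sum_{v\in V_i'} d_G(v)\ =\ 2e_G(V_i')+e_G(V_i',V_0')+\sum_{j\in[k_1]\setminus\{i\}}e_G(V_i',V_j').
\]
By goodness, $2e_G(V_i')\le(1+\eps_1)pt^2\le\tfrac{2}{k_1}\,tNp\le\tfrac{\nu}{10}\,tNp$ since $t\le N/k_1$ and $k_1\ge 20/\nu$. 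For $e_G(V_i',V_0')$: if $|V_0'|\ge\beta N$ bound it by $(1+\eps_1)\eps_1\,tNp$; if $|V_0'|<\beta N$ first enlarge $V_0'$ to a set of size exactly $\beta N$ disjoint from $V_i'$ and use goodness to get $e_G(V_i',V_0')\le(1+\eps_1)p\,t\,\beta N$. In either case this is at most $\tfrac{\nu}{10}\,tNp$, using $\eps_1$ small and $\beta=1/(2M)\le 1/(2m_1)\le\nu(\mu+\nu)/20$. Hence $\sum_{j\neq i}e_G(V_i',V_j')\ge(\mu+\tfrac{4\nu}{5})\,tNp$. Since each summand is $\le(1+\eps_1)pt^2$ by goodness, writing $J_i:=\{j\neq i: d_G(V_i',V_j')\ge\tfrac{\nu}{4}p\}$ and bounding the $j\notin J_i$ terms by $\tfrac{\nu}{4}pt^2$ each (over $\le k_1$ indices, with $k_1 t\le N$) yields $(\mu+\tfrac{4\nu}{5})k_1\le(1+\eps_1)|J_i|+\tfrac{\nu}{4}k_1$, so $|J_i|\ge(\mu+\tfrac{\nu}{2})k_1$ for $\eps_1$ small.

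Finally, clean up irregular pairs: since at most $\eps_1k_1^2$ pairs are irregular, at most $2\sqrt{\eps_1}k_1$ indices $i$ lie in more than $\sqrt{\eps_1}k_1$ irregular pairs; move those classes into the exceptional set to obtain $V_0\supseteq V_0'$ and surviving classes $\{V_i\}_{i\in I}$ with $k:=|I|\ge(1-2\sqrt{\eps_1})k_1$, so $m\le k\le M$ (using $m_1\ge 2m$), all of size $t$, and $|V_0|\le\eps_1 N+2\sqrt{\eps_1}N\le\eps N$. For each surviving $i$, the number of $j\in I$ with $(V_i',V_j')$ both $(\eps_1,p)$-regular and of density $\ge\tfrac{\nu}{4}p$ is at least $|J_i|-\sqrt{\eps_1}k_1-2\sqrt{\eps_1}k_1\ge(\mu+\tfrac{\nu}{2}-3\sqrt{\eps_1})k_1\ge\mu k_1\ge\mu k$; each such pair is $(\eps_1,p)$- hence $(\eps,p)$-regular and, by goodness, of density in $[\tfrac{\nu}{4}p,(1+\eps_1)p]=[dp,(1+\eps)p]$, which gives the claim (if a single density value is wanted rather than the lower bound $\ge dp$, one further discretises the densities into $O(1/\eps_1)$ buckets). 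The only genuinely delicate points are controlling $e_G(V_i',V_0')$ — in a sparse graph the regularity lemma's exceptional set could a priori absorb a constant fraction of a vertex's neighbourhood, which is why the density upper bound together with the padding trick is needed — and calibrating the failure probability $e^{-\Omega(N^2p)}$ against the $4^N$-sized union bound, which is what forces $p=\omega(1/N)$; everything else is routine bookkeeping of the constants $\eps_1,m_1,M$.
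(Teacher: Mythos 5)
The paper does not prove this statement at all: it is quoted from \cite{noever2017local} with the remark that it follows from Theorem~\ref{thm:SRL} by standard arguments, so there is no internal proof to compare against. Your derivation is exactly that standard argument (Chernoff plus a union bound to get upper density bounds for all linear-sized sets, Scott's sparse regularity lemma, a degree-sum count showing each class has many regular partners of density at least $\nu p/4$, then discarding the few classes lying in many irregular pairs), and it is essentially correct; the probability bookkeeping, the padding trick for $e_G(V_i',V_0')$, and the counting that yields $|J_i|\geq(\mu+\nu/2)k_1$ all check out. Two small points. First, ``of density $dp$'' cannot be meant literally (the density of the induced pair $G[V_i,V_j]$ is whatever it is), so the only usable reading --- and the one in the cited source --- is ``density at least $dp$'', which is precisely what you prove with $d=\nu/4$; your parenthetical about discretising densities into buckets should be dropped, since pigeonholing over buckets would not produce a single $d$ valid for every $i$ simultaneously without destroying the $\mu k$ count, and it is not needed. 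Second, Theorem~\ref{thm:SRL} as stated is normalised by the density of $G$ itself rather than by the ambient $p$; on your good event that density lies in $[(\mu+\nu)p,(1+\eps_1)p]$, so an $(\eps_1,\cdot)$-regular pair in that normalisation is $(2\eps_1,p)$-regular and the discrepancy is absorbed by shrinking $\eps_1$, but this step deserves a sentence. (Strictly, your $M$ also depends on $\mu,\nu$, not only on $\eps,m$; this is harmless since it remains a constant independent of $N$.)
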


\subsection{Regularity inheritance}
The key ingredients of our proof strategy---namely, Lemma~\ref{lem:regularity-partitioning} and Lemma~\ref{lem:clique-expansion-lemma}---require that besides large subsets (see, Proposition~\ref{prop:large-subsets-inheritance}), \emph{most} of the \emph{small} subsets inherit $(\eps, p)$-regularity. In particular, a random pair of subsets of size roughly $p^{-1}$ is $(\eps', p)$-regular with high probability, for a slightly worse $\eps'$. A somewhat weaker property which serves as the main step towards achieving that goal was first observed by Gerke, Kohayakawa, R\"{o}dl, and Steger~\cite{gerke2007small}.
\begin{theorem}[Corollary 3.9 in \cite{gerke2007small}]\label{thm:GKRS}
  For all $0 < \beta, \eps' < 1$, there exist $\eps_0(\beta, \eps')$ and $C(\eps')$ such that, for any $0 < \eps \leq \eps_0$ and $0 < p < 1$, the following holds. Suppose $G = (V_1 \cup V_2, E)$ is an $(\eps, p)$-regular graph of density $dp$ and suppose $q_1, q_2 \geq C(dp)^{-1}$. Let $N$ be the number of pairs $(Q_1, Q_2)$ with $Q_i \subseteq V_i$ and $|Q_i| = q_i$ ($i = 1, 2$), and such that there are $\tilde{Q}_i \subseteq Q_i$ with $|\tilde{Q}_i| \geq (1 - \eps')|Q_i|$ for which we have
  \begin{enumerate}[label=(\roman*)]
    \item $G'= G[\tilde{Q}_1, \tilde{Q}_2]$ is $(\eps', p)$-regular,
    \item the density $d'p$ of $G'$ satisfies $(1 - \eps')dp \leq d'p \leq (1 + \eps')dp$.
  \end{enumerate}
  Then
  \[
    N \geq (1 - \beta^{\min\{q_1, q_2\}})\binom{|V_1|}{q_1}\binom{|V_2|}{q_2}.
  \]
\end{theorem}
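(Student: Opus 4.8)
The plan is to reduce $(\eps', p)$-regularity of the sampled pair to a short list of \emph{local counting conditions} that are manifestly stable under vertex sampling, and then to verify this stability by hypergeometric concentration, keeping careful track of how the auxiliary parameters depend on $\beta$. So first I would record the (essentially standard) quantitative equivalence: a bipartite pair $(A,B)$ of density $dp$ is $(\eps,p)$-regular if and only if, up to a polynomial change in the regularity parameter, it satisfies a \emph{degree condition} --- all but an $O(\eps)$-fraction of the vertices of $A$ have degree $(d\pm O(\eps))p|B|$ into $B$, and symmetrically for $B$ --- together with a \emph{pair condition} --- all but an $O(\eps)$-fraction of the pairs from $A$ have co-degree $(d^2\pm O(\eps))p^2|B|$ in $B$, and symmetrically; equivalently, in the aggregate, the number of labelled $4$-cycles is $(1\pm O(\eps))(dp)^4|A|^2|B|^2$. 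The forward direction is the routine application of the regularity definition to the sets of high/low degree (resp.\ high/low co-degree); the reverse direction is the usual defect Cauchy--Schwarz computation. The point of this reformulation is that the degree, cherry ($P_3$) and $C_4$ counts are precisely the quantities one can hope to control after replacing $V_i$ by a random subset $Q_i$.

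Next I would pass from $(V_1,V_2)$ to $(Q_1,Q_2)$ by two-round exposure, revealing $Q_2$ first. Applying $(\eps,p)$-regularity of $(V_1,V_2)$ to the neighbourhoods $N(v)\cap V_1$ for $v\in V_2$ (and to pairs of such neighbourhoods), the number of exceptional vertices and pairs of $V_i$ is only $O(\eps)|V_i|$; since the hypothesis $q_i\ge C(dp)^{-1}$ forces the sampled degrees $dp\cdot q_i$ and the sampled co-degree sums to be of order at least $C$, a hypergeometric tail bound shows that, except on an event of probability at most $\beta^{q_2}$, the pair $(V_1,Q_2)$ again satisfies the degree and pair conditions with a somewhat worse constant. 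Here $C$ is chosen large in terms of $\eps'$, while the input parameter $\eps_0$ is chosen small in terms of $\beta$ and $\eps'$; the latter is exactly what makes the crude binomial-coefficient estimate $\binom{q_i}{\eps' q_i}\bigl(O(\eps_0)\bigr)^{\eps' q_i}\le\beta^{q_i}$ go through (it suffices that $\eps_0\ll\eps'\beta^{1/\eps'}$), so that the sample meets the exceptional set of $V_i$ in at most an $\eps'$-fraction. Running the same argument for $Q_1$ inside $(V_1,Q_2)$ and letting $\tilde Q_i$ be $Q_i$ with its $O(\eps')q_i$ atypical vertices deleted, one gets that $(\tilde Q_1,\tilde Q_2)$ satisfies the degree and pair conditions and hence, by the reverse direction of the first step, is $(\eps',p)$-regular; its density lies in $(1\pm\eps')dp$ because the edge count of $(Q_1,Q_2)$ concentrates and only an $\eps'$-fraction of vertices was discarded.

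The step I expect to be the genuine obstacle is making the co-degree half of the characterisation work in the \emph{very sparse} regime $dp\ll1$: there the typical co-degree $(dp)^2q_i$ may be well below $1$, so one cannot argue about concentration of individual co-degrees and must instead control the \emph{aggregate} $C_4$-count $\sum_{u,u'}|N(u)\cap N(u')\cap\tilde Q_2|^2$, which is still of order at least a fixed power of $C$. Bounding this degree-$4$ statistic from above under sampling --- a Kim--Vu / DeMarco--Kahn-type upper-tail estimate in the spirit of Proposition~\ref{prop:counting-cliques} but for $C_4$ rather than $K_t$ --- and then deducing that no $\eps'$-sized subsets of $\tilde Q_1$ and $\tilde Q_2$ carry a badly deviating density is where the real content of Gerke--Kohayakawa--R\"odl--Steger lies.

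A secondary technical point one must respect throughout is that there is no room for a union bound over all subsets of $V_i$ of size $\ge\eps'|V_i|$: there are $2^{\Theta(|V_i|)}$ of them, which swamps the $\beta^{q_i}$ budget since $q_i\ll|V_i|$. Consequently every failure event has to be routed through the bounded family of exceptional vertices and exceptional pairs furnished by regularity of the source pair, and one should never certify regularity of the sampled pair by inspecting its large subsets directly --- only by verifying the three (edge, cherry/degree, $C_4$) counting conditions, each of which is a single concentration statement.
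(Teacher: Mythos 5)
There is a genuine gap, and you put your finger on it yourself: the step you defer as ``where the real content of Gerke--Kohayakawa--R\"odl--Steger lies'' is not a technical loose end but the entire theorem. Your strategy rests on the Chung--Graham--Wilson-type equivalence between $(\eps,p)$-regularity and degree/co-degree (equivalently $C_4$-count) statistics with only a polynomial loss in the parameter. That equivalence is valid for dense graphs, but it is \emph{false} in the regime that Theorem~\ref{thm:GKRS} is actually about, namely $dp\to 0$ (note the hypothesis $q_i\ge C(dp)^{-1}$ is vacuous otherwise). An $(\eps,p)$-regular pair of density $dp$ may contain, say, a complete bipartite graph on two sets of size $o(\eps|V_i|)$; this does not disturb $(\eps,p)$-regularity, yet it can contribute far more than $(dp)^4|V_1|^2|V_2|^2$ labelled $C_4$'s once $dp\ll |V_1|^{-1/2}$, so the aggregate $C_4$-count of the source pair is simply not bounded above by $(1+O(\eps))(dp)^4|V_1|^2|V_2|^2$, and the defect Cauchy--Schwarz ``reverse direction'' gives nothing. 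Moreover, your proposed remedy --- an upper-tail estimate for $C_4$'s in the spirit of Proposition~\ref{prop:counting-cliques} --- is not available here, because Theorem~\ref{thm:GKRS} is a \emph{deterministic} statement about an arbitrary $(\eps,p)$-regular graph $G$: the only randomness is the choice of $(Q_1,Q_2)$, and there is no ambient random graph whose tail bounds you may invoke. (The variant that does sit inside a random graph is Corollary~\ref{cor:small-subsets-inheritance}, which is deduced \emph{from} Theorem~\ref{thm:GKRS}, not the other way around.) So the plan proves the dense case, where the statement is classical, and collapses exactly in the sparse case.

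For calibration: the paper does not prove this statement at all --- it is imported as a black box from \cite{gerke2007small} --- and the argument there does not go through co-degree or $C_4$ statistics. It bounds the number of ``bad'' pairs $(Q_1,Q_2)$ directly, by a delicate counting/averaging argument over the choice of the sampled sets (exposing them in a way that lets all failure events be charged to witnesses inside the sample, whose number is only $2^{O(q_1+q_2)}$, combined with hypergeometric-type estimates at scale $dp\,q_i\ge C$); this is also why the conclusion only guarantees regular \emph{subsets} $\tilde Q_i\subseteq Q_i$ rather than regularity of $(Q_1,Q_2)$ itself. Your final remark about the impossibility of a union bound over large subsets of $V_i$ is correct and well taken, but it does not rescue the quasirandomness reduction on which the rest of the proposal depends.
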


It turns out that when we are dealing with a subset of a random graph, we do not need to look into subsets of sets $Q_1$ and $Q_2$, but the sets themselves span a regular pair. We point out that such a statement is not true in general (see, Section 3.3 in \cite{gerke2007small}).
\begin{corollary}\label{cor:small-subsets-inheritance}
  For all $0 < \beta, \eps', d, p < 1$, there exist positive constants $\eps_0(\beta, \eps', d)$, $D(\eps')$, and $c(\eps')$ such that, for every $q_0 \ge D p^{-1} \log N$ and $0 < \eps \le \eps_0$, a random graph $\GNp$ has the following property with probability at least $1 - e^{-c \cdot q_0^2 p}$. Suppose $G = (V_1 \cup V_2, E)$ is an $(\eps, p)$-regular graph of density $dp$ which is a subgraph of $\GNp$. Then for all $q_1, q_2 \ge q_0$, there are at least
  \[
    (1 - \beta^{\min\{q_1, q_2\}})\binom{|V_1|}{q_1}\binom{|V_2|}{q_2}
  \]
  sets $Q_i \subseteq V_i$ of size $|Q_i| = q_i$ ($i = 1, 2$) which induce an $(\eps', p)$-regular graph of density $(1 \pm \eps')dp$.
\end{corollary}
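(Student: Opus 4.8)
The plan is to combine the \emph{deterministic} content of Theorem~\ref{thm:GKRS} with a single high-probability edge-counting property of $\GNp$, namely the $t = 2$ case of Proposition~\ref{prop:counting-cliques} (taken with error parameter $1$): on an event of probability at least $1 - e^{-c q_0^2 p}$, every two disjoint sets $S_1, S_2 \subseteq V(\GNp)$ with $|S_1|, |S_2| \geq \eps_1 q_0$ span at most $2|S_1||S_2|p$ edges, where $\eps_1 = \eps_1(\eps', d) \in (0, 1)$ is a small parameter fixed below. Since this event concerns $\GNp$ alone, once we condition on it we may argue deterministically for \emph{every} admissible subgraph $G$ simultaneously. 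Theorem~\ref{thm:GKRS}, applied with parameters $\beta$ and $\eps_1$, already guarantees that for all but a $\beta^{\min\{q_1, q_2\}}$-fraction of the pairs $(Q_1, Q_2)$ of the prescribed sizes there are witnesses $\tilde{Q}_i \subseteq Q_i$ with $|\tilde{Q}_i| \geq (1 - \eps_1)|Q_i|$ such that $G[\tilde{Q}_1, \tilde{Q}_2]$ is $(\eps_1, p)$-regular of density $(1 \pm \eps_1)dp$. The whole task is then to show that, under the conditioned edge-count, passing from $\tilde{Q}_i$ back to $Q_i$ costs almost nothing: the discarded sets $B_i := Q_i \setminus \tilde{Q}_i$ have size at most $\eps_1 |Q_i|$, hence are incident to at most $O(\eps_1 |Q_1||Q_2|p)$ edges of $G$, which is negligible next to the roughly $dp\,|Q_1||Q_2|$ edges the pair carries once $\eps_1$ is small compared with $\eps'$ and $d$.

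To carry this out I would first fix $\eps_1 = \eps_1(\eps', d)$ small enough for the estimates below; it suffices to take $\eps_1 \leq \eps'/2$ together with $\eps_1$ at most a suitable constant multiple of $(\eps')^2 d$. Applying Theorem~\ref{thm:GKRS} with $\beta$ and $\eps_1$ produces a constant $\eps_0$ (which we take as the $\eps_0$ of the corollary) and the constant $C_{\ref{thm:GKRS}}(\eps_1)$; I then set $D := \max\{\,C_{\ref{thm:GKRS}}(\eps_1)/d,\ C_{\ref{prop:counting-cliques}}/\eps_1\,\}$ and $c := \eps_1^2\, c_{\ref{prop:counting-cliques}}$. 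With $q_0 \geq D p^{-1}\log N$ and $\log N \geq 1$ this yields, on one hand, $q_1, q_2 \geq q_0 \geq C_{\ref{thm:GKRS}}(\eps_1)(dp)^{-1}$, so Theorem~\ref{thm:GKRS} applies to every $(\eps, p)$-regular subgraph of density $dp$ with $\eps \leq \eps_0$, and, on the other hand, $\eps_1 q_0 \geq C_{\ref{prop:counting-cliques}} p^{-1}\log N$, so Proposition~\ref{prop:counting-cliques} with $t = 2$ and $n = \eps_1 q_0$ supplies the edge-counting event with failure probability $e^{-c_{\ref{prop:counting-cliques}}(\eps_1 q_0)^2 p} = e^{-c q_0^2 p}$. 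I would condition on this event for the rest of the argument.

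The final, deterministic step. Fix an $(\eps, p)$-regular $G = (V_1 \cup V_2, E) \subseteq \GNp$ of density $dp$, integers $q_1, q_2 \geq q_0$, and a pair $(Q_1, Q_2)$ delivered by Theorem~\ref{thm:GKRS} with witnesses $\tilde{Q}_i$; it is enough to verify that $G[Q_1, Q_2]$ itself is $(\eps', p)$-regular of density $(1 \pm \eps')dp$, since then the number of such pairs is at least the number guaranteed by Theorem~\ref{thm:GKRS}. For the verification, take any $W_i \subseteq Q_i$ with $|W_i| \geq \eps'|Q_i|$ (the choice $W_i = Q_i$ also pinning down $d(Q_1, Q_2)$) and set $W_i' := W_i \cap \tilde{Q}_i$; then $|W_i'| \geq |W_i| - |B_i| \geq (1 - \eps_1/\eps')|W_i| \geq \eps_1 |\tilde{Q}_i|$ because $\eps_1 \leq \eps'/2$, so the $(\eps_1, p)$-regularity of $G[\tilde{Q}_1, \tilde{Q}_2]$ gives $e(W_1', W_2') = (d \pm \eps_1(d + 1))p\,|W_1'||W_2'|$, while $0 \leq e(W_1, W_2) - e(W_1', W_2') \leq e(B_1, Q_2) + e(Q_1, B_2)$, and the conditioned edge-count bounds each of these last two terms by $O(\eps_1 |Q_1||Q_2|p)$ (after enlarging each $B_i$ within $V_i$ up to size $\eps_1 q_0$, which is possible since $|V_i| \geq q_0$). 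Dividing by $|W_1||W_2|$, using $\eps'|Q_i| \leq |W_i| \leq |Q_i|$, and absorbing the resulting error into $\eps' dp$ via the choice of $\eps_1$ gives $d(W_1, W_2) = (1 \pm \eps')dp$; applying the same with $W_i = Q_i$ then gives $|d(W_1, W_2) - d(Q_1, Q_2)| \leq \eps' p$, as needed. I expect this last quantitative comparison to be the only delicate point: the $O(\eps_1 |Q_1||Q_2|p)$ edge loss is amplified by the factor $|Q_i|/|W_i| \leq 1/\eps'$ when transferred to the density of $(W_1, W_2)$, which is exactly why $\eps_1$ must be chosen small relative to $(\eps')^2 d$ rather than merely to $\eps'$. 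Beyond this bookkeeping there is no genuine obstacle, since the uniformity over the exponentially many pairs $(Q_1, Q_2)$ and over all subgraphs $G$ is entirely absorbed into the single edge-counting event.
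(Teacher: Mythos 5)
Your proposal is correct and follows essentially the same route as the paper's proof: apply Theorem~\ref{thm:GKRS} with an auxiliary parameter $\eps_1 \ll \eps', d$, condition on the $t=2$ edge-concentration event from Proposition~\ref{prop:counting-cliques}, and then deterministically transfer regularity from the witness sets $\tilde Q_i$ to the full sets $Q_i$ by showing the few edges meeting $Q_i \setminus \tilde Q_i$ are negligible. The only (harmless) deviation is bookkeeping: since you bound the stray edges against all of $Q_j$ rather than against $W_j$ as the paper does, dividing by $|W_1||W_2| \geq (\eps')^2|Q_1||Q_2|$ actually forces $\eps_1$ to be a small multiple of $(\eps')^3 d$ rather than the $(\eps')^2 d$ you state, which is absorbed by your blanket ``$\eps_1$ small enough'' convention.
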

\begin{proof}
  The proof is straightforward: we apply Theorem~\ref{thm:GKRS} for an $\eps''$ that we choose small enough (depending on $\eps'$ and $d$) and then use that the number of edges in $\GNp$ between any sufficiently large sets is sharply concentrated around the expectation. This allows us to show that all pairs $(Q_1, Q_2)$ for which subsets $\tilde Q_i$ exist and satisfy the condition of Theorem~\ref{thm:GKRS} are in fact $(\eps', p)$-regular themselves. We now make this more precise.

  For given $\beta, \eps', d$, let $\eps''$ be small enough depending on $\eps'$ and $d$ such that the calculations below hold, $\eps_0 = \eps_{0_{\ref{thm:GKRS}}}(\beta, \eps'')$, $D = \max\{C_{\ref{prop:counting-cliques}}(\eps'', 2)/\eps'', C_{\ref{thm:GKRS}}(\eps'')\}$, and $c = c_{\ref{prop:counting-cliques}}(\eps'', 2)$. Let $Q_1, Q_2, \tilde Q_1, \tilde Q_2$ be as in Theorem \ref{thm:GKRS} applied with $\beta$ and $\eps''$ as $\eps'$. Consider arbitrary subsets $Q_i' \subseteq Q_i$ of size at least $\eps' |Q_i|$, and set $\tilde Q_i' := \tilde Q_i \cap Q_i'$. Observe that
  \[
    |\tilde Q_i'| \geq (\eps' - \eps'')|{Q}_i| \ge (\eps'-\eps'') |\tilde{Q}_i| \ge \eps''|\tilde{Q}_i|,
  \]
  as $\eps''$ is sufficiently small compared to $\eps'$. We thus know that
  \[
    d(\tilde Q_1', \tilde Q_2') = d(\tilde Q_1, \tilde Q_2) \pm \eps'' p = (1\pm \eps'')dp \pm \eps'' p = (1 \pm (\eps'' + \eps''/d)) dp.
  \]
  What remains is to show that $d(Q_1', Q_2')$ is sufficiently close to $d(\tilde Q_1', \tilde Q_2')$, which in particular implies that $d(Q_1', Q_2')$ is close to $d(Q_1, Q_2)$.

  By applying Proposition~\ref{prop:counting-cliques} with $t = 2$ and $\eps''$ as $\eps$, we get that with probability at least $1 - e^{-c \cdot q_0^2p}$ any two disjoint sets $S_1, S_2$ of size at least $\eps'' q_0$ satisfy $e(S_1, S_2) \le (1 + \eps'') |S_1| |S_2| p$. Since $|Q'_i| \ge \eps' |Q_i| \ge \eps'' q_0$ and
  \[
    |Q_i' \setminus \tilde Q_i'| \leq \eps'' |Q_i| \leq (\eps''/\eps')|Q_i'|,
  \]
  it holds that
  \[
    d(Q'_2 \setminus \tilde Q'_2, Q'_1), d(Q'_1 \setminus \tilde Q'_1, Q'_2) \le (1 + \eps'') (\eps'' / \eps') |Q'_1| |Q'_2|p.
  \]
  Note that if $Q_i' \setminus \tilde Q_i'$ are not of size at least $\eps''|Q_i|$ we may simply take arbitrary (super)sets of size $\eps''|Q_i|$. From here we get
  \begin{align*}
    e(Q_1', Q_2') &\leq e(\tilde Q_1', \tilde Q_2') + e(Q_1' \setminus \tilde Q_1', Q_2') + e(Q_1', Q_2' \setminus \tilde Q_2') \\
    &\leq e(\tilde Q_1', \tilde Q_2') + 2 (\eps''/\eps') (1 + \eps'') |Q_1'||Q_2'|p.
  \end{align*}
  Using the fact that $(1 - \eps''/\eps') \leq |\tilde Q_i'|/|Q_i'| \leq 1$ and the previous inequality, we obtain
  \[
    (1 - \eps''/\eps')^2 d(\tilde Q_1', \tilde Q_2') \le d(Q_1', Q_2') \le d(\tilde Q_1',\tilde Q_2') + \frac{2\eps'' (1 + \eps'')}{\eps'}p.
  \]
  As $d(\tilde Q_1', \tilde Q_2') = (1 \pm (\eps'' + \eps''/d)) dp$, we see that whenever $\eps'' = \eps''(\eps', d)$ is small enough then $d(Q_1', Q_2') = dp \pm \eps'p = d(Q_1, Q_2) \pm\eps'p$, as claimed.
\end{proof}

The next lemma shows that in a random graph any $(\eps, p)$-regular partition with $k$ classes can be used to generate $(\eps', p)$-regular partitions with the number of classes growing with $N$, for a slightly worse $\eps'$. We remark that such a lemma may be of independent interest and prove useful in other problems concerning embeddings of large structures into random graphs.
\begin{lemma}\label{lem:regularity-partitioning}
  For all $0 < \eps', d, p < 1$ there exist positive constants $\eps_0(\eps', d)$, $D(\eps')$, and $c(\eps')$, such that for every integer $q \ge D p^{-1} \log N$ and every $0 < \eps \leq \eps_0$, the following holds with probability at least $1 - e^{ - c \cdot q^2 p}$. Suppose $(V_i)_{i = 0}^{k}$ is an $(\eps, p)$-regular partition of the vertex set of a spanning subgraph $G$ of a random graph $\GNp$. Then there exists a partition $(V_i^j)_{j = 0}^{t}$ of each $V_i$ such that:
  \begin{enumerate}[label=(\roman*)]
    \item\label{reg-size} $|V_{i}^{0}| \leq q$ and $|V_i^1| = \ldots = |V_i^t| = q$,
    \item\label{reg-inheritance} if $(V_{i_1}, V_{i_2})$ is an $(\eps, p)$-regular pair of density $dp$, then $(V_{i_1}^{j_1}, V_{i_2}^{j_2})$ is an $(\eps', p)$-regular pair of density $(1 \pm \eps')dp$, for all $j_1, j_2 \in [t]$.
  \end{enumerate}
\end{lemma}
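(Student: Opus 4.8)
The plan is to decouple the randomness: first use Corollary~\ref{cor:small-subsets-inheritance} to fix a single high-probability event over $\GNp$, and then, \emph{on that event}, produce the desired refinement by a short probabilistic-method argument in which the only randomness is a uniformly random choice of how to cut each $V_i$ into blocks of size $q$. Concretely, I would set $\beta := 1/2$ and apply Corollary~\ref{cor:small-subsets-inheritance} with this $\beta$ and the given $\eps', d, p$ to obtain $\eps_{0_{\ref{cor:small-subsets-inheritance}}}, D_{\ref{cor:small-subsets-inheritance}}, c_{\ref{cor:small-subsets-inheritance}}$, and then put $\eps_0 := \eps_{0_{\ref{cor:small-subsets-inheritance}}}$, $D := \max\{D_{\ref{cor:small-subsets-inheritance}}, 3\}$, and $c := c_{\ref{cor:small-subsets-inheritance}}$. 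For $q \ge D p^{-1}\log N$, with probability at least $1 - e^{-c q^2 p}$ the random graph $\GNp$ then has the property of Corollary~\ref{cor:small-subsets-inheritance} applied with $q_0 := q$: for every $(\eps, p)$-regular graph $(W_1 \cup W_2, E) \subseteq \GNp$ of density $dp$ with $\eps \le \eps_0$, at least a $(1 - \beta^{q})$-fraction of all pairs of $q$-element sets $(Q_1, Q_2)$ with $Q_i \subseteq W_i$ induce an $(\eps', p)$-regular graph of density $(1 \pm \eps')dp$. I would condition on this event; since it already carries the failure probability $e^{-c q^2 p}$, everything below is deterministic given it.

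Next I would produce the refinement. Fix an arbitrary spanning $G \subseteq \GNp$ and an $(\eps, p)$-regular partition $(V_i)_{i=0}^{k}$ of $V(G)$ with $\eps \le \eps_0$. Since $V_1, \dots, V_k$ all have the same size, set $t := \lfloor |V_1|/q \rfloor$ (if $t = 0$ there is nothing to prove). Independently for each $i \in [k]$, choose a uniformly random ordered partition $V_i = V_i^0 \cup V_i^1 \cup \dots \cup V_i^t$ with $|V_i^1| = \dots = |V_i^t| = q$ and $|V_i^0| = |V_i| - tq < q$; this gives property~\ref{reg-size} for free. For property~\ref{reg-inheritance}, fix a regular pair $(V_{i_1}, V_{i_2})$ of density $dp$ and indices $j_1, j_2 \in [t]$. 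By symmetry $V_{i_1}^{j_1}$ is a uniformly random $q$-subset of $V_{i_1}$ and $V_{i_2}^{j_2}$ is a uniformly random $q$-subset of $V_{i_2}$, and — crucially, because the partitions of distinct classes are chosen independently — the pair $(V_{i_1}^{j_1}, V_{i_2}^{j_2})$ has exactly the product distribution addressed by the conditioned property. Applying that property to $G[V_{i_1}, V_{i_2}] \subseteq \GNp$ (which is $(\eps,p)$-regular of density $dp$ by assumption), the probability that $(V_{i_1}^{j_1}, V_{i_2}^{j_2})$ fails to be an $(\eps', p)$-regular pair of density $(1 \pm \eps')dp$ is at most $\beta^{q}$.

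To finish, I would take a union bound over the at most $\binom{k}{2}$ regular pairs of density $dp$ and the at most $t^2$ choices of $(j_1, j_2)$. Since the blocks are disjoint we have $kt \le N$, so there are at most $N^2$ such triples, and the probability that \emph{some} $(V_{i_1}^{j_1}, V_{i_2}^{j_2})$ violates~\ref{reg-inheritance} is at most $N^2 \beta^{q} = N^2 2^{-q}$, which is strictly less than $1$ because $q \ge D\log N \ge 3\log N$. Hence a choice of partitions satisfying both~\ref{reg-size} and~\ref{reg-inheritance} exists, which is what the lemma claims.

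I do not expect a serious obstacle here; the argument is essentially bookkeeping on top of Corollary~\ref{cor:small-subsets-inheritance}. The only points requiring care are (i) observing that distinct classes are refined independently, so that a cross-class pair of blocks is a genuine independent pair of uniform $q$-subsets and the corollary applies verbatim, and (ii) balancing parameters: $\beta$ is a constant we are free to pick (taking $\beta = 1/2$ is convenient), and $D$ must be taken large enough that the $N^2$ loss in the union bound is absorbed by $\beta^{q}$, while the entire probabilistic cost over $\GNp$ — and hence the claimed failure probability $e^{-c q^2 p}$ — is inherited unchanged from Corollary~\ref{cor:small-subsets-inheritance}. Note also that the remainder blocks $V_i^0$ never enter~\ref{reg-inheritance}, so no regularity needs to be argued for them.
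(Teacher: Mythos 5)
Your proposal is correct and follows essentially the same route as the paper: fix the high-probability event from Corollary~\ref{cor:small-subsets-inheritance} with $\beta = 1/2$, refine each $V_i$ by a uniformly random equitable partition into blocks of size $q$, bound the failure probability of each cross-class block pair by $\beta^{q}$, and absorb the union bound over at most $(kt)^2 \le N^2$ pairs using $q \ge D p^{-1}\log N$. The only differences are cosmetic bookkeeping (your $D \ge 3$ versus the paper's $D \ge 10$, and applying the corollary once as a uniform statement rather than union-bounding over the $\binom{k}{2}$ regular pairs), neither of which affects correctness.
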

\begin{proof}
  Let $\beta = 1/2$, $\eps_0 = \eps_{0_{\ref{cor:small-subsets-inheritance}}}(\beta, \eps', d)$, $D = \max \{D_{\ref{cor:small-subsets-inheritance}}(\eps'), 10\}$, $c' = c_{\ref{cor:small-subsets-inheritance}}(\eps')$, $c = c' / 2$, and suppose that $(V_i)_{i = 0}^{k}$ is an $(\eps, p)$-regular partition of $V(G)$. Let $q \geq Dp^{-1} \log{N}$ be fixed. From here on, we assume that the conclusions of Corollary \ref{cor:small-subsets-inheritance} hold when applied with pair $(V_{i_1}, V_{i_2})$, for every two $i_1, i_2 \in [t]$. This is true with probability at least $1 - t^2 e^{-c' \cdot q^2p} \ge 1 - e^{-c \cdot q^2 p}$.

  For every $i \in [k]$, take $(V_{i}^{j})_{j = 0}^{t}$ to be a partition chosen u.a.r.\ among all partitions satisfying $|V_{i}^{1}| = \ldots = |V_{i}^{t}| = q$, with a leftover set $|V_{i}^{0}| \leq q$. We show that such a partition w.h.p.\ satisfies the required properties.

  By Corollary~\ref{cor:small-subsets-inheritance} we know that for every $(\eps, p)$-regular pair $(V_{i_1}, V_{i_2})$ of density $dp$ there are at most
  \[
    \beta^{q} \binom{|V_{i_1}|}{q} \binom{|V_{i_2}|}{q}
  \]
  sets $Q_{i_1} \subseteq V_{i_1}, Q_{i_2} \subseteq V_{i_2}$, of size $q$ which do not induce an $(\eps', p)$-regular pair of density $(1 \pm \eps')dp$. Therefore, the probability that a pair $(V_{i_1}^{j_1}, V_{i_2}^{j_2})$ does not satisfy $\ref{reg-inheritance}$ is at most $\beta^{q}$, for any fixed $j_1, j_2 \in [t]$.

  Applying a union bound over at most $\binoms{k}{2}$ possible $(\eps, p)$-regular pairs $(V_{i_1}, V_{i_2})$ and all $j_1, j_2 \in [t]$, we obtain that the probability that our partition $(V_{i}^{j})_{j = 1}^{t}$ does not satisfy $\ref{reg-inheritance}$, is at most
  \[
     \binom{k}{2} t^2 \beta^{q} \leq (kt)^2 \cdot \beta^{q} \le N^2 \cdot 2^{-\frac{10 \log N}{p}} = o(1),
  \]
  where the last inequality holds as $\beta = 1/2$ and $q \ge 10 p^{-1} \log N$. This, in particular, implies that there {\em exists} a partition of $V(G)$ as required, satisfying both $\ref{reg-size}$ and $\ref{reg-inheritance}$, which completes the proof.
\end{proof}

\subsection{Typical vertices and blow-ups}
One nice property of regularity is that in an $(\eps, p)$-regular $t$-tuple most of the vertices have neighbourhoods in the other sets of roughly the expected size. For us it is useful if these neighbourhoods also induce regular pairs. We capture this in the following definition.
\begin{definition}[$\eps$-\textbf{typical}]\label{def:typical-tuples}
  Let $t \geq 3$ be an integer and $(V_1, \ldots, V_t)$ a $t$-tuple with densities $d_{ij}p$ between $V_i$ and $V_j$. A vertex $v \in V_i$, for $i \in [t]$, is said to be $\eps$-\emph{typical} if:
  \begin{enumerate}[(i), font=\itshape]
    \item $|N_j| = (1 \pm \eps)d_{ij}|V_j|p$ for all $j \in [t] \setminus \{i\}$, where $N_j := N(v, V_j)$, and
    \item\label{typical-tuple-reg-inheritance} $(N_{j}, N_{k})$ is an $(\eps, p)$-regular pair of density $(1 \pm \eps)d_{jk}p$, for all $j, k \in [t] \setminus \{i\}$, $j \neq k$.
  \end{enumerate}
  Furthermore, we say that the $t$-tuple $(V_1, \ldots, V_t)$ is $\eps$-\emph{typical} if it is $(\eps, p)$-regular and for each $i \in [t]$ all but at most $\eps|V_i|$ vertices are $\eps$-\emph{typical}.
\end{definition}

Given a graph $H$ on the vertex set $\{1, \ldots, t\}$ and sequences $\mathbf n = (n_i)_{i \in V(H)}$ of positive integers and $\mathbf d = (d_{ij})_{\{i, j\} \in E(H)}$ of positive reals, we denote by $\cG(H, \mathbf n, \mathbf d, \eps, p)$ the class of graphs that consist of $|V(H)|$ disjoint sets of size $n_i$, each representing a vertex of $H$, and an $(\eps, p)$-regular graph of density $d_{ij}p$ between two sets whenever the corresponding vertices are adjacent in $H$. Similarly, for positive reals $\eta, \alpha$, we denote by $\cG(H, n, \eta, \alpha, \eps, p)$ the class of graphs as above where $n_i \in [\eta n, n]$ for all $i \in V(H)$ and $d_{ij} \in (1 \pm \eps)\alpha$ for all $\{i, j\} \in E(H)$.

It turns out that almost all graphs in $\cG(K_t, \mathbf n, \mathbf d, \eps, p)$ are $\eps'$-typical, whenever $\eps$ is sufficiently small compared to $\eps'$ and the class sizes are not too small. The following lemma makes this precise.
\begin{lemma}\label{lem:bad-graphs-count}
  Let $t \geq 3$ be an integer. For all $\alpha, \beta, \eps' > 0$ there exist positive constants $\eps_0(\alpha, \beta, \eps')$ and $D(\alpha, \eps')$ such that for every $\eps \leq \eps_0$, $\mathbf n$ with $n_i \geq n$, $\mathbf d$ with $d_{ij} \geq \alpha$, and $p \geq Dn^{-1/2}$, all but at most
  \[
    \beta^{\alpha n^2p} \prod_{1 \leq i < j \leq t} \binom{n_i n_j}{d_{ij} n_i n_j p}
  \]
  graphs in $\cG(K_t, \mathbf n, \mathbf d, \eps, p)$ are $\eps'$-typical, provided that $n$ is sufficiently large.
\end{lemma}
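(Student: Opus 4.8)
The plan is to bound the number of non-$\eps'$-typical graphs in $\cG := \cG(K_t, \mathbf n, \mathbf d, \eps, p)$ probabilistically. Since $\eps \le \eps_0 \le \eps'$, an $(\eps, p)$-regular pair is $(\eps', p)$-regular, so every graph in $\cG$ is already $(\eps', p)$-regular as a $t$-tuple; hence it fails to be $\eps'$-typical only if, for some $i \in [t]$, at least $\eps' n_i$ vertices of $V_i$ are not $\eps'$-typical. Let $\mathcal D$ be the distribution that chooses, independently over the pairs $\{i, j\}$, a uniformly random bipartite graph between $V_i$ and $V_j$ with exactly $m_{ij} := d_{ij} n_i n_j p$ edges. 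Every graph in $\cG$ has the same $\mathcal D$-probability $\big(\prod_{i<j}\binom{n_i n_j}{m_{ij}}\big)^{-1}$, so the number of non-$\eps'$-typical graphs in $\cG$ is at most $\prod_{i<j}\binom{n_i n_j}{m_{ij}}$ times
\[
  \Pr_{\mathcal D}\big[\exists\, i : \text{at least }\eps' n_i\text{ vertices of }V_i\text{ are not }\eps'\text{-typical}\big],
\]
and a union bound over $i$ reduces everything to bounding, for a fixed $i$, the $\mathcal D$-probability that $V_i$ contains $\eps' n_i$ non-typical vertices by (say) $t^{-1}\beta^{\alpha n^2 p}$.

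Fix $i$. The device is a two-stage exposure that breaks the dependence between the events ``$v$ is not $\eps'$-typical'', $v \in V_i$. First reveal all bipartite graphs $H_{jk}$ with $j, k \ne i$ (the \emph{external} graphs) together with the degree sequence $\big(|N(v, V_j)|\big)_{v \in V_i,\, j \ne i}$; then reveal the actual neighbourhoods $N_j(v) := N(v, V_j)$. The point is that, conditioned on the first stage, the $N_j(v)$ are mutually independent, each uniform among subsets of $V_j$ of its prescribed size, independent across $v \in V_i$ and independent of the external graphs. Now handle the two clauses of $\eps'$-typicality of a vertex. For clause~(i), $|N(v,V_j)|$ is hypergeometric with mean $d_{ij} n_j p \ge \alpha n p$; for each $j$ the indicators ``$|N(v,V_j)|$ too large'' (resp.\ ``too small''), $v \in V_i$, are negatively associated, being monotone functions of disjoint coordinates of a uniformly chosen edge set, so a Chernoff bound for negatively associated variables shows the number of vertices of $V_i$ with a wrong degree into some $V_j$ exceeds $\eps' n_i/2$ with probability at most $e^{-\Omega(n^2 p)}$ (using $n_i \ge n$). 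For clause~(ii), one first shows that with probability $1 - e^{-\Omega(n^2 p)}$ every external $H_{jk}$ is $(\eps, p)$-regular and has its edge counts tightly concentrated between all subsets down to scale $\Theta(\alpha n p)$ — this is where $p \ge D n^{-1/2}$ is used, so that the Chernoff bounds survive the union bound over small subsets. On this event, Corollary~\ref{cor:small-subsets-inheritance} (more precisely the deterministic core of its proof: Theorem~\ref{thm:GKRS} plus the edge-count concentration used to pass from the $\tilde Q_i$ to the $Q_i$) applies to each $H_{jk}$ with GKRS-parameter $\beta_0$, giving that all but a $\beta_0^{\Omega(n p)}$ fraction of pairs of subsets of the relevant sizes induce an $(\eps', p)$-regular pair of density $(1 \pm \eps') d_{jk} p$. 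Since, conditioned on the first stage, the $N_j(v)$ are uniform subsets of the right sizes and independent across $v$, the events ``$v$ violates clause~(ii)'' are independent with probability at most $\binom{t}{2}\beta_0^{\Omega(n p)}$ each, so a union bound over which $\eps' n_i/2$ vertices fail yields a bound $\binom{n_i}{\eps' n_i/2}\big(\binom{t}{2}\beta_0^{\Omega(n p)}\big)^{\eps' n_i/2}$, which is at most $\beta^{\alpha n^2 p}$ once $\beta_0$ is chosen small enough in terms of $\alpha, \beta, \eps'$.

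Putting the pieces together, for each fixed $i$ the probability that $V_i$ has $\eps' n_i$ non-typical vertices is at most the sum of the probabilities that (a) some external $H_{jk}$ is not ``nice'' in the above sense, (b) at least $\eps' n_i/2$ vertices of $V_i$ have a wrong degree, and (c) at least $\eps' n_i/2$ of the correct-degree vertices violate clause~(ii); each of these is $\le t^{-1}\beta^{\alpha n^2 p}$ after the choices of $\eps_0$, $D$ and $\beta_0$ and using $n_i \ge n$, and the final union bound over $i \in [t]$ is absorbed into the constants. The step I expect to be the real obstacle is clause~(ii): making precise that a random pair of small subsets of a regular pair again spans a regular pair of essentially the same density. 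This is exactly the content of the regularity-inheritance results, and the delicate point is that one needs edge-count concentration of the fixed external graphs down to the small scale $\Theta(n p)$ — precisely what $p \ge D n^{-1/2}$ provides — together with careful bookkeeping so that after conditioning on the external graphs and the degree sequence the ``bad vertex'' events become genuinely independent.
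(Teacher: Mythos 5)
Note first that the paper itself gives no proof of this lemma: it is stated with the remark that it follows ``by a straightforward modification of the proof of \cite[Lemma~5.1]{gerke2005sparse}'', so there is nothing to compare line by line. Judged on its own terms, your framework is partly sound: the reduction to the uniform model with exactly $d_{ij}n_in_jp$ edges per pair, the observation that conditioned on the degree sequence the neighbourhoods $N(v,V_j)$, $v\in V_i$, are independent uniform sets of prescribed sizes, the negative-association argument for clause~(i), and the per-vertex use of Theorem~\ref{thm:GKRS} (with $q_i=\Theta(np)$, which is where $p\ge Dn^{-1/2}$ enters so that $q_i\ge C(d_{jk}p)^{-1}$) are all fine. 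The gap is exactly where you flag it, and it is not a bookkeeping issue: your bridge from the subsets $\tilde Q_i$ of Theorem~\ref{thm:GKRS} to honest $(\eps',p)$-regularity of the full neighbourhood pairs rests on the claim that with probability $1-e^{-\Omega(n^2p)}$ every external $H_{jk}$ has concentrated edge counts between \emph{all} pairs of subsets down to scale $\Theta(\alpha np)$, and that claim is quantitatively false. A single pair of subsets of size $s=\Theta(np)$ spans only $\Theta(s^2p)=\Theta(n^2p^3)$ edges in expectation, so the probability that it is (say) twice as dense as it should be is $e^{-\Theta(n^2p^3)}$, which for $p=o(1)$ dwarfs the entire budget $\beta^{\alpha n^2p}=e^{-\Theta(n^2p)}$; worse, the union bound over the $e^{\Theta(np\log(1/p))}$ such pairs needs $np^2\gtrsim\log n$, whereas $p\ge Dn^{-1/2}$ only gives $np^2\ge D^2$, so the ``nice external graph'' event does not even hold w.h.p.\ at the bottom of the admissible range of $p$.

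This failure is not repairable by a sharper Chernoff bound, because the statement you are implicitly invoking --- that in a fixed $(\eps,p)$-regular graph a random pair of sets of size $\Theta(np)$ is itself regular of about the right density, except with probability $\beta_0^{\Omega(np)}$ --- is false for general regular graphs, as the paper points out before Corollary~\ref{cor:small-subsets-inheritance} (Section~3.3 of \cite{gerke2007small}): an $(\eps,p)$-regular $H_{jk}$ may contain a nearly complete $w\times w$ block with $w=\Theta(n\sqrt{p})\ll\eps n$, and then \emph{most} neighbourhood pairs are over-dense, so conditioned on such an external graph your per-vertex clause~(ii) failure probability is close to $1$ and the independence over $v\in V_i$ buys nothing. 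Corollary~\ref{cor:small-subsets-inheritance} escapes this only because there the graph sits inside $\GNp$ and Proposition~\ref{prop:counting-cliques} supplies the missing small-scale edge bounds; in the present purely enumerative lemma that crutch is unavailable, and one must instead \emph{count} the regular graphs containing such dense spots and show their number is below $\beta^{\alpha n^2p}\binom{n_jn_k}{d_{jk}n_jn_kp}$ --- a genuine extra counting ingredient (essentially what the cited Gerke--Steger argument provides), not a routine concentration step. A secondary, smaller issue: your clause~(i) bound is $e^{-c(\alpha,\eps',t)n^2p}$ with $c$ independent of $\beta$, so it cannot be ``$\le t^{-1}\beta^{\alpha n^2p}$'' once $\beta<e^{-c}$; this piece, unlike clause~(ii), can be fixed deterministically, since for $\eps\le\eps'\alpha/2$ the $(\eps,p)$-regularity of $(V_i,V_j)$ already forbids $\eps' n_i/2$ vertices from deviating in the same direction.
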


Lemma~\ref{lem:bad-graphs-count} is proven by a straightforward modification of the proof of \cite[Lemma~5.1]{gerke2005sparse}, we omit the details. Moreover, the number of non-typical graphs is so small that a simple union bound implies that w.h.p.\ in a random graph $\GNp$ every large enough regular tuple is also typical.
\begin{corollary}\label{cor:regular-to-typical}
  Let $t \geq 3$ be an integer. For all $\alpha, \eps', \eta > 0$, there exist positive constants $\eps_0(\alpha, \eps', \eta, t)$, $C(\alpha, \eps', \eta, t)$, and $c(\alpha, \eps', \eta, t)$ such that for every integer $n \geq \max\{Cp^{-2}, Cp^{-1}\log{N}\}$ a random graph $\GNp$ satisfies the following with probability at least $1 - e^{-c \cdot n^2 p}$. Every subgraph $G$ of $\GNp$ in $\cG(K_{t}, n, \eta, \alpha, \eps, p)$ is $\eps'$-typical, provided that $\eps \leq \eps_0$.
\end{corollary}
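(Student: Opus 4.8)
The plan is to combine Lemma~\ref{lem:bad-graphs-count} with a union bound over all possible configurations of $t$ vertex classes and all possible "edge-sets" between them that could arise as a subgraph of $\GNp$. Fix $\alpha, \eps', \eta$. Choose $\eps_0 = \eps_{0,\ref{lem:bad-graphs-count}}(\alpha', \beta, \eps')$ for a suitably small $\alpha'$ (roughly $\alpha' = (1-\eps')\alpha \ge \alpha/2$, since the densities of graphs in $\cG(K_t, n, \eta, \alpha, \eps, p)$ satisfy $d_{ij} \in (1\pm\eps)\alpha$) and for $\beta$ a small absolute constant, say $\beta = 1/2$; pick $D = D_{\ref{lem:bad-graphs-count}}(\alpha', \eps')$ and require $C \ge D^2$ so that the hypothesis $p \ge D n^{-1/2}$ of Lemma~\ref{lem:bad-graphs-count} is met whenever $n \ge Cp^{-2}$. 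The role of the hypothesis $n \ge Cp^{-1}\log N$ will be exactly to make the union bound (which loses a factor like $N^{tn}$) beat the error term $\beta^{\alpha' n^2 p}$ from Lemma~\ref{lem:bad-graphs-count}.

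First I would set up the union bound. A subgraph $G$ of $\GNp$ lying in $\cG(K_t, n, \eta, \alpha, \eps, p)$ is specified by: (a) a choice of $t$ pairwise disjoint vertex subsets $W_1, \ldots, W_t \subseteq V(\GNp)$ with $|W_i| = n_i \in [\eta n, n]$, and (b) for each pair $i < j$, a choice of which edges of $\GNp$ between $W_i$ and $W_j$ to keep, subject to the kept edges forming an $(\eps, p)$-regular pair of density $d_{ij}p$ with $d_{ij} \in (1\pm\eps)\alpha$. The number of choices in (a) is at most $\binom{N}{n_1}\cdots\binom{N}{n_t} \le N^{tn} \le e^{tn\log N}$, and there are at most $n^t$ choices of the sizes $(n_i)$. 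For (b), observe that every edge kept between $W_i$ and $W_j$ is an edge of $\GNp$; since a density-$d_{ij}p$ pair on classes of size $n_i, n_j$ has exactly $d_{ij}n_in_jp$ edges, the number of ways to realise the bipartite graph between $W_i$ and $W_j$ is at most $\binom{e(\GNp[W_i,W_j])}{d_{ij}n_in_jp}$. Conditioning on the (w.h.p.) event that $\GNp$ has at most $(1+o(1))n_in_jp \le 2n_in_jp$ edges between any two such sets — or more simply bounding $e(\GNp[W_i,W_j]) \le n_in_j$ — this is at most $\binom{n_in_j}{d_{ij}n_in_jp}$, matching the product in Lemma~\ref{lem:bad-graphs-count}. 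Crucially, once $W_1, \ldots, W_t$ and the target densities $(d_{ij})$ are fixed, the set of realisable bipartite graphs is (a subset of) $\cG(K_t, \mathbf n, \mathbf d, \eps, p)$ restricted to those lying inside $\GNp$, so Lemma~\ref{lem:bad-graphs-count} tells us that at most $\beta^{\alpha' n^2 p}\prod_{i<j}\binom{n_in_j}{d_{ij}n_in_jp}$ of them fail to be $\eps'$-typical.

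Then I would take the union bound. Summing over the at most $e^{tn\log N} \cdot n^t \cdot (\text{choices of }d_{ij})$ outer configurations — the number of admissible density vectors $(d_{ij})$ is at most, say, $(n_in_jp)^{\binom{t}{2}} \le N^{t^2}$, which is absorbed — the probability that $\GNp$ contains \emph{some} non-$\eps'$-typical $G \in \cG(K_t, n, \eta, \alpha, \eps, p)$ is at most
\[
  e^{O(tn\log N)} \cdot \beta^{\alpha' n^2 p} \;=\; \exp\!\Big( O(tn\log N) - \alpha' n^2 p \log(1/\beta) \Big).
\]
Since $n \ge Cp^{-1}\log N$ gives $n^2 p \ge C n \log N$, choosing $C = C(\alpha,\eps',\eta,t)$ large enough makes the exponent at most $-\tfrac12\alpha' n^2 p\log(1/\beta) =: -c\, n^2 p$, as required; and the conditioning event on edge counts of $\GNp$ also holds with probability $1 - e^{-\Omega(n^2p)}$ by a standard Chernoff-plus-union-bound argument (or can be sidestepped entirely using the crude bound $e(\GNp[W_i,W_j]) \le n_in_j$). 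The main technical point — really the only place any care is needed — is the bookkeeping in step (b): making sure the count of realisable bipartite graphs between two classes is bounded by exactly the binomial coefficient $\binom{n_in_j}{d_{ij}n_in_jp}$ appearing in Lemma~\ref{lem:bad-graphs-count}, so that the "bad fraction" $\beta^{\alpha' n^2p}$ multiplies against a quantity that then telescopes cleanly through the union bound rather than blowing up. Everything else is a routine large-deviation / union-bound estimate.
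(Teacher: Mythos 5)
Your overall skeleton (apply Lemma~\ref{lem:bad-graphs-count} per choice of vertex sets and densities, then union bound, with $n \ge Cp^{-2}$ guaranteeing $p \ge Dn^{-1/2}$ and $n \ge Cp^{-1}\log N$ beating the $e^{O(tn\log N)}$ placement factor) is the same as the paper's, but the key probabilistic step is missing and, as written, the bound does not follow. Lemma~\ref{lem:bad-graphs-count} gives a \emph{count} of bad graphs in $\cG(K_t,\mathbf n,\mathbf d,\eps,p)$; to bound the probability that $\GNp$ contains one of them you must multiply this count by the probability $p^{e(G)}$ that a \emph{fixed} such graph (a fixed edge set on fixed vertex sets) appears in $\GNp$, i.e.\ bound the expected number of bad subgraphs. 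Your union bound instead treats $\beta^{\alpha' n^2p}$ itself as the per-configuration probability, which is a non sequitur: the fact that bad graphs form a $\beta^{\alpha' n^2p}$-fraction of the whole class says nothing about how many of them sit inside $\GNp$ (the conditioning trick via ``realisable bipartite graphs'' does not repair this, since the bad graphs could in principle be exactly the realisable ones). Done correctly, the per-configuration contribution is
\[
  \beta^{\alpha' n^2 p}\prod_{i<j}\binom{n_in_j}{d_{ij}n_in_jp}\,p^{\sum_{i<j} d_{ij}n_in_jp}
  \;\le\; \Big(\beta^{\Theta(1)}\cdot \tfrac{e}{\alpha'}\Big)^{\Theta(n^2p)},
\]
so the discarded factor $\prod_{i<j}\binom{n_in_j}{d_{ij}n_in_jp}\,p^{\sum m_{ij}}$ is itself $e^{\Theta(n^2p)}$ \emph{large}, on the same exponential scale as the gain from Lemma~\ref{lem:bad-graphs-count}.

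This also invalidates your choice of $\beta$ as an absolute constant such as $1/2$: for small $\alpha$ the product above does not decay, so $\beta$ must be chosen small in terms of $\alpha$ (and $\eta$, $t$). This is exactly what the paper does: it aggregates the binomials via Vandermonde into $\tilde\beta^{\tilde m}\binom{\tilde n^2}{\tilde m}$ with $\tilde\beta = \tilde\alpha/e^2$, multiplies by $p^{\tilde m}$ and the $N^{\tilde n}$ placement term, and obtains an expected number of bad subgraphs at most $e^{\tilde n\log N - \tilde\alpha\tilde n^2 p} \le e^{-(\tilde\alpha/2)\tilde n^2 p}$, using $n \ge Cp^{-1}\log N$ precisely where you predicted. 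Since Lemma~\ref{lem:bad-graphs-count} lets you choose $\beta$ freely (its $\eps_0$ and $D$ depend on $\beta$), the fix is to insert the expected-count computation and back-solve for $\beta$; with that change your argument becomes the paper's proof.
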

\begin{proof}
  For given $\alpha, \eps', t$, let $\tilde\alpha = (1 - \eps')\alpha \eta^2/t^2$, $\tilde \beta = \tilde \alpha/e^2$, and choose $\beta$ such that equation \eqref{eq:bad-graphs-bound} below holds. Lastly, take $C = \max\{ D_{\ref{lem:bad-graphs-count}}((1 - \eps')\alpha, \eps')/\eta, 2t / (\eta \tilde \alpha )\}$, $\eps_0 = \min\{\eps', \eps_{0_{\ref{lem:bad-graphs-count}}}((1 - \eps')\alpha, \beta, \eps')\}$, and $c = \tilde \alpha \eta^2 /4$.

  Fix sequences $\mathbf n = (n_i)_{i \in V(K_t)}$ with all $n_i \in [\eta n, n]$ and $\mathbf d = (d_{ij})_{\{i, j\} \in E(K_t)}$ with all $d_{ij} \in (1 \pm \eps)\alpha$. Assume $G$ is a graph which belongs to $\cG(K_t, \mathbf n, \mathbf d, \eps, p)$ but is not $\eps'$-typical. Then, making use of Vandermonde's identity (cf.\ e.g.~\cite{aigner2007course}), i.e.\ the fact that $\binom{x + y}{k} = \sum_{i} \binom{x}{i} \binom{y}{k - i}$, we conclude that $G$ must be one of at most
  \begin{equation}\label{eq:bad-graphs-bound}
    \beta^{(1 - \eps')\alpha \eta^2 n^2 p} \prod_{1 \leq i < j \leq t} \binom{n_in_j}{d_{ij} n_in_j p}\leq \beta^{(1 - \eps')\alpha \eta^2 n^2 p} \binom{\sum_{1 \leq i < j \leq t} n_in_j}{\sum_{1 \leq i < j \leq t} d_{ij} n_in_j p} \leq \tilde \beta^{\tilde m} \binom{\tilde n^2}{\tilde m}
  \end{equation}
  graphs enumerated by Lemma~\ref{lem:bad-graphs-count}, where $\tilde n = \sum_{1 \leq i \leq t} n_i$ and $\tilde m = \sum_{1 \leq i < j \leq t} d_{ij} n_i n_j p$. One easily sees that our choice of $\tilde \alpha$ implies $\tilde m \geq \tilde\alpha \tilde n^2 p$. Hence, the expected number of copies of $G$ in $\GNp$ is at most
  \begin{align*}
    \bigg( \prod_{i=1}^t \binom{N}{n_i} \bigg) \cdot \beta^{\tilde m} \binom{\tilde n^2}{\tilde m} p^{\tilde m} & \leq N^{\tilde n} \beta^{\tilde m} \Big( \frac{e}{\tilde\alpha p} \Big)^{\tilde m} p^{\tilde m} \leq e^{\tilde n \log N} \Big( \frac{\tilde\alpha}{e^2} \Big)^{\tilde m} \Big( \frac{e}{\tilde\alpha} \Big)^{\tilde m} \\
    & \leq e^{\tilde n \log N - \tilde\alpha \tilde n^2p} \le e^{-(\tilde \alpha /2) \tilde n^2 p},
  \end{align*}
  where the last inequality follows from the choice of $C$. Applying a union bound for all sequences $\mathbf n$ and $\mathbf d$ we conclude that $\GNp$ with probability at least $1 - e^{-c \cdot n^2 p}$ does not contain any such graph as a subgraph.
\end{proof}

Let $H$ be a fixed graph on the vertex set $\{1, \ldots, t\}$ and let $G = (V_1 , \ldots, V_t)$ be a $t$-partite graph. We say that a $t$-tuple $(v_1, \ldots, v_t)$ is a \emph{canonical copy} of $H$ in $G$ if $v_i \in V_i$ for every $i \in V(H)$ and $\{v_i, v_j\} \in E(G)$ whenever $\{i, j\} \in E(H)$. We denote by $H(V_1, \ldots, V_t)$ the set of all canonical copies of $H$ in $(V_1, \ldots, V_t)$. Note that for $t = 2$ the number of canonical copies of $K_2$ in $(V_1,V_2)$ is exactly equal to the number of edges between $V_1$ and $V_2$, which is $|V_1||V_2|$ times the density of $(V_1,V_2)$. For $t \ge 3$ we get a similar statement whenever the corresponding $t$-tuple is sufficiently regular. The following lemma makes this precise.
\begin{lemma}\label{lem:counting-lemma}
  Let $t \geq 2$ be an integer. Then for all $\alpha, \delta, \eta > 0$, there exist positive constants $\eps_0(\alpha, \delta, \eta, t)$, $C(\alpha, \delta, \eta, t)$, and $c(\alpha, \delta, \eta, t)$ such that for every integer $n \geq \max\{ Cp^{-t}, Cp^{-t + 1}\log{N} \}$ a random graph $\GNp$ satisfies the following with probability at least $1 - e^{-c \cdot n^2 p^{2(t-2) + 1}}$. Every subgraph $G$ of $\GNp$ in $\cG(K_{t}, n, \eta, \alpha, \eps, p)$ contains
  \[
    (1 \pm \delta) \bigg(\prod_{1 \le i \le t} n_i \bigg) \bigg(\prod_{1 \le i < j \le t} d_{ij} p\bigg)
  \]
  canonical copies of $K_t$, provided that $\eps \leq \eps_0$.
\end{lemma}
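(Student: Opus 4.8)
The plan is to prove the counting lemma by induction on $t$, exactly as in the classical (dense) counting/embedding lemma, but carrying the sparse regularity tools developed above. The base case $t = 2$ is immediate: a canonical copy of $K_2$ in $(V_1, V_2)$ is just an edge, and since $(V_1, V_2)$ is $(\eps, p)$-regular of density $d_{12}p$, with $V_1, V_2$ themselves large enough, the number of edges is $(1 \pm \eps) d_{12} |V_1||V_2| p$, which is within the claimed error if $\eps \le \eps_0 \le \delta$. For $t = 3$ one counts, for each $v \in V_1$, the number of edges between $N(v, V_2)$ and $N(v, V_3)$; here we want to sum over the $\eps$-typical vertices of $V_1$ (using Corollary~\ref{cor:regular-to-typical} to guarantee almost all vertices are typical and hence have the right-sized neighbourhoods spanning $(\eps,p)$-regular pairs of density $(1 \pm \eps)d_{23}p$), apply the $t=2$ count inside each such pair, and discard the $\le \eps |V_1|$ atypical vertices. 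The probabilistic event we condition on is the conjunction of the conclusions of Corollary~\ref{cor:regular-to-typical} and Proposition~\ref{prop:counting-cliques} (the latter gives a matching upper bound on clique counts in all large sub-tuples and is what lets us bound the contribution of atypical vertices from above); all of these hold with probability at least $1 - e^{-c \cdot n^2 p^{2(t-2)+1}}$ after adjusting constants, and the hypothesis $n \ge \max\{Cp^{-t}, Cp^{-t+1}\log N\}$ is exactly what is needed so that the relevant neighbourhoods, of size $\approx \alpha n p$, still exceed the thresholds $Cp^{-(t-1)}$, $Cp^{-(t-2)}\log N$ required by those two statements when applied to $K_{t-1}$.

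For the inductive step, assume the statement for $t - 1$ and consider $G \in \cG(K_t, n, \eta, \alpha, \eps, p)$. For each $\eps$-typical vertex $v \in V_1$, its neighbourhoods $N_j := N(v, V_j)$ for $j \in \{2, \ldots, t\}$ have sizes $(1 \pm \eps) d_{1j} |V_j| p \in [\eta' np, \ldots]$ and, by typicality (Definition~\ref{def:typical-tuples}\ref{typical-tuple-reg-inheritance}), each pair $(N_j, N_k)$ is $(\eps, p)$-regular of density $(1 \pm \eps) d_{jk} p$. Thus $(N_2, \ldots, N_t)$ is itself (up to renaming parameters) a member of $\cG(K_{t-1}, \cdot, \cdot, \cdot, \cdot, p)$ with part sizes $\Theta(np)$ and densities $\Theta(\alpha)$, so the induction hypothesis gives $(1 \pm \delta')\prod_{2 \le j \le t}|N_j| \prod_{2 \le j < k \le t} d_{jk} p$ canonical copies of $K_{t-1}$ there, each of which together with $v$ forms a canonical copy of $K_t$. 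Summing over the $(1 \pm \eps)|V_1|$ typical vertices of $V_1$, substituting $|N_j| = (1 \pm \eps) d_{1j}|V_j|p$, and collecting error terms yields a lower bound of the required form; for the upper bound we add the contribution of the at most $\eps|V_1|$ atypical vertices, which by Proposition~\ref{prop:counting-cliques} applied to $(\{v\} \cup N_2 \cup \cdots)$ — or more cleanly to the whole $t$-partite graph — contribute at most an $O(\eps)$-fraction of the main term. Choosing $\eps_0$ and the auxiliary $\delta'$ small enough as functions of $\delta, \alpha, \eta, t$ makes the total error at most $\delta$.

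The main obstacle is bookkeeping the parameters through the induction: at each level the part sizes shrink by a factor $\approx \alpha p$ and the densities stay $\Theta(\alpha)$, so the "$\eta$" and "$\alpha$" one feeds into the $(t-1)$-level statement are not the original ones but $\eta^{t-1}$-ish and $(1\pm\eps)^{t-1}\alpha$-ish quantities, and the regularity parameter degrades from $\eps$ to something like $\eps/\eta$ (via Proposition~\ref{prop:large-subsets-inheritance}, which is why we only need $v$ to be $\eps$-typical rather than needing the $N_j$ to be regular with a better constant). One must therefore define the constants top-down: fix the final $\delta$, peel off to $t-1$ with a smaller $\delta'$ and larger $C'$, and only at the end choose $\eps_0$ small enough to absorb the accumulated $(1\pm\eps)$ factors; the probability bound $e^{-c n^2 p^{2(t-2)+1}}$ is dictated by the weakest link, which is Corollary~\ref{cor:regular-to-typical} applied to neighbourhoods of size $\Theta(np)$ (giving exponent $(np)^2 p = n^2 p^3$ at one level of recursion, and $n^2 p^{2(t-2)+1}$ after $t-2$ levels). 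None of these steps is deep, but the constant-chasing is the part that has to be done carefully.
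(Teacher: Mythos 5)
Your proposal is correct and follows essentially the same route as the paper: induction on $t$, conditioning on Corollary~\ref{cor:regular-to-typical} and Proposition~\ref{prop:counting-cliques}, counting copies through the neighbourhood tuples $(N_2,\ldots,N_t)$ of $\eps$-typical vertices of $V_1$ via the induction hypothesis (with part sizes $\Theta(np)$, which is exactly where the exponent $n^2p^{2(t-2)+1}$ and the hypothesis on $n$ come from), and bounding the contribution of the at most $\eps|V_1|$ atypical vertices from above by Proposition~\ref{prop:counting-cliques}. The only cosmetic slip is attributing the regularity of the pairs $(N_j,N_k)$ to Proposition~\ref{prop:large-subsets-inheritance} (the $N_j$ are far too small for that); as you note in the same breath, it comes from the typicality of $v$ itself, which is how the paper argues.
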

\begin{proof}
  The proof is by induction on $t$. For $t = 2$ there is nothing to show by the discussion above.
  Consider some $t \geq 3$.

  For given $\delta, \alpha, \eta$, let us choose $\delta'$ and $\tilde\eps$ such that $(1 \pm \delta')^{2} \subseteq (1 \pm \delta)$ and $\big( (1 \pm \tilde \eps)^{t^{2}} \pm 2 \tilde \eps/\alpha^{t^{2}} \big) \subseteq (1 \pm \delta')$. Furthermore, we define $\alpha_{t - 1} = \alpha$, $\delta_{t - 1} = \delta'$, $\eta_{t - 1} = (1 - \tilde\eps)^{2}\eta / (1 + \tilde\eps)^{2}$, $\eps_{t - 1} = \eps_{0_{\ref{lem:counting-lemma}}}(\alpha_{t - 1}, \delta_{t - 1}, \eta_{t - 1}, t-1)$, $C_{t - 1} = C_{\ref{lem:counting-lemma}}(\alpha_{t - 1}, \delta_{t - 1}, \eta_{t - 1}, t-1)$, $c_{t - 1} = c_{\ref{lem:counting-lemma}}(\alpha_{t - 1}, \delta_{t - 1}, \eta_{t - 1}, t-1)$, and $n_{t - 1} = (1 + \tilde\eps)^2 \alpha np$. Take $C' = C_{\ref{cor:regular-to-typical}}(\alpha, \tilde\eps, (1 - \tilde\eps)^2 \eta / (1 + \tilde\eps)^2)$, $\eps' = \min\{ \eps_{t - 1}, \tilde\eps, \eps_{0_{\ref{cor:regular-to-typical}}}(\alpha, \tilde\eps, t - 1) \}$, and $C = \max\{ C', C_{t - 1}\} /((1 - \tilde \eps)^{2} \alpha \eta)$. Finally, let $\eps_0 = \min\{\eps_{t - 1}, \eps_{0_{\ref{cor:regular-to-typical}}}(\alpha, \eps', \eta, t)\}$ and $c'_1 = c_{\ref{cor:regular-to-typical}}(\alpha, \eps', \eta, t)$, $c'_2 = c_{\ref{cor:regular-to-typical}}(\alpha, \tilde \eps, (1 - \tilde\eps)^2 \eta / (1 + \tilde\eps)^2, t-1)$, $c'_3 = c_{\ref{prop:counting-cliques}}(\eps', t)$. From now on we assume that the lemma holds when applied for $t - 1$, $\alpha_{t - 1}$, $\delta_{t - 1}$, $\eta_{t - 1}$, and $n_{t - 1}$. Since this happens with probability at least
  \[
    1 - e^{-c_{t-1} \cdot n_{t-1}^2 p^{2 (t- 3) + 1} } = 1 - e^{-c_{t-1} \cdot (1 + \tilde \eps)^2 \alpha^2 n^2 p^{2 (t- 2) + 1} },
  \]
  it is sufficient to show that the induction step holds with probability at least $1 - e^{- \Omega( n^2 p^{2 (t- 2) + 1})}$ with the hidden constant depending only on $\delta, \alpha$, and $\eta$, and then set $c$ to be sufficiently small with respect to that constant. We further assume that $\GNp$ is such that the conclusions of Proposition~\ref{prop:counting-cliques} for $t$, $\eps'$ as $\eps$, and $\eps' n_1$ as $n$, Corollary~\ref{cor:regular-to-typical} for $\alpha$, $\eps'$, and $\eta$, and Corollary~\ref{cor:regular-to-typical} for $\alpha$, $\tilde\eps$ as $\eps'$, $(1 - \tilde\eps)^2\eta / (1 + \tilde\eps)^2$ as $\eta$, and $(1 + \tilde\eps)^2 \alpha np$ as $n$, hold. This happens with probability at least
  \[
    1 - e^{-c'_1 n^2 p} - e^{-c'_2 \cdot (1 + \tilde\eps)^4 \alpha^2 \eta^2 n^2 p^3} - e^{-c'_3 \cdot \eps'^2 n^2 p^{t - 1}} = 1 - \eps^{-\Omega(n^2p^{2(t - 2) + 1})}.
  \]

  Observe that our choice of $\eps_0$ and Corollary~\ref{cor:regular-to-typical} imply that any $G$ as in the lemma is $\eps'$-typical. Let $v \in V_1$ be an $\eps'$-typical vertex and $N_i := N(v, V_i)$ its neighbourhoods for $2 \leq i \leq t$. By the definition of typical vertices we know that the sets $N_i$ satisfy $|N_i| = (1 \pm \eps')d_{ij} n_ip$. From the assumptions on $n_i$ and $d_{ij}$ (in the definition of the graph class $\cG(K_{t}, n, \eta, \alpha, \eps, p$)), the assumption on $n$ in the statement of the lemma, the choice of $C'$, and the fact that $\eps' \leq \tilde \eps$, we deduce that the sets $N_i$ satisfy
  \begin{align*}
    (1 + \tilde\eps)^2 \alpha np \geq |N_i| \ge (1 - \tilde\eps)^2 \alpha \eta np &\ge \max\{ C'p^{-t + 1}, C'p^{-t + 2}\log{N} \} \\
    &\ge \max\{ C'p^{-2}, C'p^{-1}\log{N} \}.
  \end{align*}
  Furthermore, we know that $(N_2, \ldots, N_t)$ is an $(\eps', p)$-regular $(t - 1)$-tuple with densities $(1 \pm \eps') d_{ij} p \subseteq (1 \pm \tilde\eps) d_{ij}p$ between $N_i$ and $N_j$.
  Hence, Corollary~\ref{cor:regular-to-typical} applied for $\alpha$, $\tilde\eps$ as $\eps'$, $(1 - \tilde\eps)^{2}\eta/(1 + \tilde\eps)^{2}$ as $\eta$, and $(1 + \tilde\eps)^2 \alpha np$ as $n$, shows that $(N_2, \ldots, N_{t})$ is $\tilde \eps$-typical. By induction hypothesis for $\delta'$, it follows that $v$ belongs to
  \[
    (1 \pm \delta') \bigg(\prod_{2 \leq i \leq t} |N_i| \bigg) \bigg(\prod_{2 \leq i < j \leq t} (1 \pm \tilde \eps) d_{ij}p \bigg) = (1 \pm \delta') (1 \pm \tilde \eps)^{\binom{t}{2}} \bigg(\prod_{2 \leq i \leq t} n_i \bigg) \bigg(\prod_{1 \leq i < j \leq t} d_{ij} p \bigg)
  \]
  canonical copies of $K_{t}$ in $(V_1, \ldots, V_{t})$. This settles the lower bound, as there are at least $(1 - \eps') n_1$ vertices $v \in V_1$ which are $\eps'$-typical, and by our choice of $\delta', \eps'$, and $\tilde\eps$.

  Moreover, as there are at most $\eps' n_1$ vertices $u \in V_1$ which are not $\eps'$-typical, by Proposition~\ref{prop:counting-cliques} applied for $t$, $\eps'$ as $\eps$, and $\eps' n_1$ as $n$, we get that they in total belong to at most $2\eps'n_1 \big( \prod_{2 \leq i \leq t} n_i \big) p^{e(K_{t})}$ such copies. Therefore, the upper bound for the number of canonical copies is given by
  \begin{align*}
    (1 + \delta') (1 + \tilde\eps)^{\binom{t}{2}} \bigg(\prod_{1 \leq i \leq t} n_i \bigg) \bigg(\prod_{1 \leq i < j \leq t} d_{ij} p \bigg) &+ 2\eps' \bigg(\prod_{1 \leq i \leq t} n_i \bigg) p^{e(K_{t})} \\
    &\leq (1 + \delta) \bigg(\prod_{1 \leq i \leq t} n_i \bigg) \bigg(\prod_{1 \leq i < j \leq t} d_{ij} p \bigg)
  \end{align*}
  again by our choice of $\delta'$, $\eps'$, and $\tilde\eps$.
\end{proof}

The definition of $\eps$-typical tuples states that the induced neighbourhoods of most of the vertices are of the right size and inherit regularity. We now generalise this idea from vertices to subgraphs. However, in order to simplify the notation and as this suffices for our purposes, we only consider the induced neighbourhoods in two sets.
\begin{definition}\label{def:typical-cliques}
  Let $t \geq 3$ be an integer and $(V_1, \ldots, V_t)$ a $t$-tuple with densities $d_{ij}p$ between $V_i$ and $V_j$. A canonical copy of $K_{t - 2}$ in $(V_1, \ldots, V_{t - 2})$ is said to be $\eps$-{\em typical} if:
  \begin{enumerate}[(i), font=\itshape]
    \item\label{typical-neighbourhoods-size} $|N_i| = (1 \pm \eps) n_i \big( \prod_{1 \leq j \leq t - 2} d_{ij} p \big)$, and
    \item\label{typical-neighbourhoods-regular} $(N_{t - 1}, N_t)$ is an $(\eps, p)$-regular pair of density $(1 \pm \eps) d_{(t - 1)t} p$,
  \end{enumerate}
  where $N_i := N(V(K_{t - 2}), V_i)$, for $i \in \{t - 1, t\}$.
\end{definition}
The next lemma provides a lower bound for the number of typical canonical copies of $K_{t - 2}$ in regular $t$-tuples. Combining it with the upper bound on the number of canonical copies given by Lemma~\ref{lem:counting-lemma} one easily sees that in a random graph $\GNp$ w.h.p.\ almost all possible canonical copies of $K_{t - 2}$ in any regular $t$-tuple are actually typical.
\begin{lemma}\label{lem:typical-cliques-counting}
  Let $t \geq 3$ be an integer. Then for all $\alpha, \delta, \eta > 0$, there exist positive constants $\eps_0(\alpha, \delta, \eta, t)$, $C(\alpha, \delta, \eta, t)$, and $c(\alpha, \delta, \eta, t)$ such that for every integer $n \geq \max\{Cp^{-t + 1}, Cp^{-t + 2}\log{N}\}$ a random graph $\GNp$ satisfies the following with probability at least $1 - e^{-c \cdot n^2 p^{2(t-3)+1}}$. Every subgraph $G$ of $\GNp$ in $\cG(K_t, n, \eta, \alpha, \eps, p)$ contains at least
  \[
    (1 - \delta) \bigg(\prod_{1 \le i \le t - 2} n_i \bigg) \bigg( \prod_{1 \le i < j \le t - 2} d_{ij} p \bigg)
  \]
  canonical copies of $K_{t - 2}$ in $(V_1, \ldots, V_{t - 2})$ which are $\delta$-typical, provided that $\eps \leq \eps_0$.
\end{lemma}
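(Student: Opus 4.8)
The plan is to prove the statement by induction on $t$, at each step peeling off one vertex of $V_1$ and passing to the $(t-1)$-tuple formed by its neighbourhoods. For the base case $t=3$ there is essentially nothing to do: a canonical copy of $K_{t-2}=K_1$ in $(V_1)$ is just a vertex $v\in V_1$, and comparing Definition~\ref{def:typical-cliques} with Definition~\ref{def:typical-tuples} shows that such a $v$ is a $\delta$-typical copy of $K_1$ precisely when it is a $\delta$-typical vertex of the $3$-tuple $(V_1,V_2,V_3)$. Hence, choosing $\eps$ small enough that every $G\in\cG(K_3,n,\eta,\alpha,\eps,p)$ lies in the class to which Corollary~\ref{cor:regular-to-typical} applies with $\eps'=\delta$, w.h.p.\ every such $G$ is $\delta$-typical and therefore has at least $(1-\delta)n_1$ typical vertices in $V_1$. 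The probability bound and the size hypothesis $n\ge\max\{Cp^{-2},Cp^{-1}\log N\}$ are exactly those of Corollary~\ref{cor:regular-to-typical}, and $2(t-3)+1=1$ there.

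For the inductive step, assume the statement for $t-1$ and let $G=(V_1,\ldots,V_t)\in\cG(K_t,n,\eta,\alpha,\eps,p)$. First apply Corollary~\ref{cor:regular-to-typical} to make $G$ itself $\gamma$-typical for a small auxiliary constant $\gamma=\gamma(\alpha,\delta,\eta,t)$; since that corollary speaks about \emph{every} subgraph of $\GNp$ in a fixed class, a single application handles all admissible $G$ at once, with failure probability $e^{-\Omega(n^2p)}$. Fix a $\gamma$-typical $v\in V_1$ and set $W_i:=N(v,V_i)$. By the definition of a typical vertex, $(W_2,\ldots,W_t)$ is a $(\gamma,p)$-regular $(t-1)$-tuple with $|W_i|=(1\pm\gamma)d_{1i}n_ip$ and densities $(1\pm\gamma)d_{ij}p$, so for $\gamma,\eps$ small it is a subgraph of $\GNp$ lying in $\cG(K_{t-1},n',\eta/2,\alpha,\eps',p)$ with $n'$ of order $\alpha\eta np$ and $\eps'=O(\gamma+\eps)$. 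This is where the hypothesis on $n$ enters: $n\ge C\max\{p^{-t+1},p^{-t+2}\log N\}$ guarantees $n'\ge C_{\ref{lem:typical-cliques-counting}}(\alpha,\delta/3,\eta/2,t-1)\cdot\max\{p^{-(t-2)},p^{-(t-3)}\log N\}$ once $C$ is large, i.e.\ the two terms $p^{-t+1}$ and $p^{-t+2}\log N$ are exactly what is needed so that the $(t-1)$-tuples, whose class size is a factor $\approx p$ smaller, still satisfy the size hypothesis of the $(t-1)$-case. The induction hypothesis for $t-1$, applied with target accuracy $\delta':=\delta/3$ and $\eta/2$, then yields at least $(1-\delta')\big(\prod_{2\le i\le t-2}|W_i|\big)\big(\prod_{2\le i<j\le t-2}(1\pm\gamma)d_{ij}p\big)$ canonical copies of $K_{t-3}$ in $(W_2,\ldots,W_{t-2})$ that are $\delta'$-typical inside $(W_2,\ldots,W_t)$.

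To conclude, observe that if $(w_2,\ldots,w_{t-2})$ is such a $\delta'$-typical copy then $(v,w_2,\ldots,w_{t-2})$ is a canonical copy of $K_{t-2}$ in $(V_1,\ldots,V_{t-2})$, and since $N(\{v,w_2,\ldots,w_{t-2}\},V_i)=N(\{w_2,\ldots,w_{t-2}\},W_i)$ for $i\in\{t-1,t\}$, substituting $|W_i|=(1\pm\gamma)d_{1i}n_ip$ and the density $(1\pm\gamma)d_{(t-1)t}p$ of $(W_{t-1},W_t)$ into the two conditions of Definition~\ref{def:typical-cliques} shows that, for $\gamma$ and $\delta'$ small in terms of $\delta$, the extended copy is $\delta$-typical. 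Copies arising from different $v$ differ in the first coordinate, so summing over the $\ge(1-\gamma)n_1$ typical $v$ and plugging in $|W_i|=(1\pm\gamma)d_{1i}n_ip$ gives
\[
  (1-\gamma)(1-\delta')(1\pm\gamma)^{O(t^2)}\,n_1\big(\textstyle\prod_{2\le i\le t-2}d_{1i}n_ip\big)\big(\textstyle\prod_{2\le i<j\le t-2}d_{ij}p\big)\ \ge\ (1-\delta)\big(\textstyle\prod_{1\le i\le t-2}n_i\big)\big(\textstyle\prod_{1\le i<j\le t-2}d_{ij}p\big)
\]
$\delta$-typical copies of $K_{t-2}$, as required. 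The auxiliary constants are fixed by a finite backwards recursion ($\delta':=\delta/3$; then $\gamma$ small enough that $\gamma$ and $O(\gamma+\eps)$ are both below $\eps_{0_{\ref{lem:typical-cliques-counting}}}(\alpha,\delta/3,\eta/2,t-1)$ and all the $(1\pm\gamma)$-corrections stay inside $(1\pm\delta/3)$; then $\eps_0:=\min\{\gamma,\eps_{0_{\ref{cor:regular-to-typical}}}(\alpha,\gamma,\eta,t)\}$), and $C$ as above. For the probability, the induction hypothesis is invoked on $(t-1)$-tuples of size $n'=\Theta(np)$, failing with probability $e^{-\Omega((n')^2p^{2(t-4)+1})}=e^{-\Omega(n^2p^{2(t-3)+1})}$; a union bound with the $e^{-\Omega(n^2p)}$ from Corollary~\ref{cor:regular-to-typical} gives the asserted $1-e^{-c\cdot n^2p^{2(t-3)+1}}$.

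I expect the only real difficulty to be organisational: maintaining a consistent chain of accuracy parameters $\delta\rightsquigarrow\delta',\gamma\rightsquigarrow\eps_0$ through the $t-2$ levels of the recursion — in particular reconciling the density distortion, which forces $\eps$ to shrink at each level — and making sure that the single applications of Corollary~\ref{cor:regular-to-typical} (and of the inductive statement) cover all the neighbourhood-tuples $(W_2,\ldots,W_t)$ simultaneously, which is exactly why the ``for every subgraph of $\GNp$'' phrasing of those results is indispensable.
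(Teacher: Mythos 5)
Your proposal is correct and follows essentially the same route as the paper's proof: induction on $t$, with the base case $t=3$ reduced to typicality of the tuple via Corollary~\ref{cor:regular-to-typical}, and the inductive step peeling off an $\eps'$-typical vertex $v\in V_1$, applying the inductive statement to the neighbourhood tuple $(N(v,V_2),\ldots,N(v,V_t))$ of size $\Theta(np)$ (which is exactly why $n\geq\max\{Cp^{-t+1},Cp^{-t+2}\log N\}$ and the exponent $2(t-3)+1$ appear), and extending the $\delta'$-typical copies of $K_{t-3}$ by $v$. The only cosmetic difference is that the paper additionally verifies $\tilde\eps$-typicality of the neighbourhood tuple via a second application of Corollary~\ref{cor:regular-to-typical}, which your argument does not need since the inductive statement only requires membership in the regular class $\cG(K_{t-1},\cdot)$.
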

\begin{proof}
  The proof is by induction on $t$. For $t = 3$ the assertion of the lemma follows directly from the definition of typical tuples. So consider some $t \geq 4$.

  For given $\delta, \alpha, \eta$, we choose the constants similarly as in the previous proof: choose $\delta'$ and $\tilde\eps$ such that $(1 \pm \delta')^{2} \subseteq (1 \pm \delta)$ and $(1 \pm \tilde \eps)^{t^{2}} \subseteq (1 \pm \delta')$. Furthermore, we define $\alpha_{t - 1} = \alpha$, $\delta_{t - 1} = \delta'$, $\eta_{t - 1} = (1 - \tilde\eps)^{2}\eta/(1 + \tilde\eps)^{2}$. Let $\eps_{t - 1} = \eps_{0_{\ref{lem:counting-lemma}}}(\alpha_{t - 1}, \delta_{t - 1}, \eta_{t - 1}, t- 1)$, $C_{t - 1} = C_{\ref{lem:counting-lemma}}(\alpha_{t - 1}, \delta_{t - 1}, \eta_{t - 1}, t-1)$, $c_{t - 1} = c_{\ref{lem:counting-lemma}}(\alpha_{t - 1}, \delta_{t - 1}, \eta_{t - 1}, t-1)$, and $n_{t - 1} = (1 + \tilde\eps)^2 \alpha np$. Take $C' = C_{\ref{cor:regular-to-typical}}(\alpha, \tilde\eps, (1 - \tilde\eps)^2 \eta / (1 + \tilde\eps)^2, t - 1)$, $\eps' = \min\{\eps_{t - 1},\tilde\eps, \eps_{0_{\ref{cor:regular-to-typical}}}(\alpha, \tilde\eps, \eta, t - 1)\}$, and $C = \max\{ C', C_{t -1} \}/((1 - \tilde\eps)^{2} \alpha \eta)$. Finally, let $\eps_0 = \eps_{0_{\ref{cor:regular-to-typical}}}(\alpha, \eps', \eta, t)$, $c'_1 = c_{\ref{cor:regular-to-typical}}(\alpha, \eps', \eta, t)$, and $c'_2 = c_{\ref{cor:regular-to-typical}}(\alpha, \tilde\eps, (1 - \tilde\eps)^2 \eta / (1 + \tilde\eps)^2, t-1)$. From now on we assume that the lemma holds when applied for $t - 1$, $\alpha_{t - 1}$, $\delta_{t - 1}$, $\eta_{t - 1}$, and $n_{t - 1}$. Since this happens with probability at least
  \[
    1 - e^{-c_{t-1} \cdot n_{t-1}^2 p^{2 (t- 4) + 1} } = 1 - e^{-c_{t-1} \cdot (1 + \tilde\eps)^2 \alpha^2 n^2 p^{2 (t- 3) + 1} }
  \]
  it is sufficient to show that the induction step holds with probability at least $1 - e^{-\Omega( n^2 p^{2 (t- 3) + 1})}$ with the hidden constant depending only on $\delta, \alpha$, $\eta$ and $t$, and then set $c$ to be sufficiently small with respect to that constant. We further assume that $\GNp$ is such that the conclusion of Corollary~\ref{cor:regular-to-typical} holds both for $\alpha$, $\eps'$, and $\eta$, as well as for $\alpha$, $\tilde\eps$ as $\eps'$, $(1 - \tilde\eps)^2\eta / (1 + \tilde\eps)^2$ as $\eta$, and $(1 + \tilde\eps)^2 \alpha np$ as $n$. This happens with probability at least
  \[
    1 - e^{-c'_1 n^2 p} - e^{-c'_2 \cdot (1 + \tilde\eps)^4 \alpha^2 \eta^2 n^2 p^3} = 1 - e^{-\Omega(n^2 p^{2 (t - 3) + 1})}.
  \]

  Let $G$ be as in the lemma and note that by Corollary \ref{cor:regular-to-typical} we have that $G$ is $\eps'$-typical. Let $v \in V_1$ be an $\eps'$-typical vertex and $N_i := N(v, V_i)$ its neighbourhoods for $2 \leq i \leq t$. Similarly as in the previous lemma we have
  \begin{align*}
    (1 + \tilde\eps)^2 \alpha np \geq |N_i| \ge (1 - \tilde\eps)^2 \alpha \eta np &\ge \max\{C'p^{-t + 2}, C'p^{-t + 3}\log{N}\} \\
    &\ge \max\{C'p^{-2}, C'p^{-1}\log{N}\},
  \end{align*}
  as $\eps' \leq \tilde\eps$. Furthermore, $(N_2, \ldots, N_t)$ is an $(\eps', p)$-regular $(t - 1)$-tuple with densities $(1 \pm \eps') d_{ij} p \subseteq (1 \pm \tilde\eps)d_{ij}p$ between $N_i$ and $N_j$, hence it must be $\tilde\eps$-typical due to Corollary~\ref{cor:regular-to-typical} applied for $t - 1$, $\alpha$, $\tilde \eps$ as $\eps'$, $(1 - \tilde\eps)^{2}\eta/(1 + \tilde\eps)^{2}$ as $\eta$, and $(1 + \tilde\eps)^2 \alpha np$ as $n$. By induction hypothesis for $\delta'$, we obtain that there are at least
  \[
    (1 - \delta') \bigg(\prod_{2 \leq i \leq t - 2} |N_i| \bigg) \bigg(\prod_{2 \leq i < j \leq t - 2} (1 - \tilde\eps)d_{ij} p \bigg) \geq (1 - \delta') \bigg( \prod_{2 \leq i \leq t - 2} n_i \bigg) \bigg( \prod_{1 \leq i < j \leq t - 2} (1 - \tilde\eps)d_{ij} p \bigg)
  \]
  $\delta'$-typical canonical copies of $K_{t - 3}$ in $(N_2, \ldots, N_{t - 2})$. As there are at least $(1 - \eps') n_1$ vertices in $V_1$ which are $\eps'$-typical and as $\eps' \le \tilde\eps$, it follows that there are at least
  \[
    (1 - \delta')(1 - \tilde\eps)^{t^{2}} \bigg(\prod_{1 \leq i \leq t - 2} n_i \bigg) \bigg(\prod_{1 \leq i < j \leq t - 2} d_{ij} p \bigg) \geq (1 - \delta) \bigg(\prod_{1 \leq i \leq t - 2} n_i \bigg) \bigg(\prod_{1 \leq i < j \leq t - 2} d_{ij} p \bigg)
  \]
  canonical copies of $K_{t - 2}$ in $(V_1, \ldots, V_{t - 2})$ which have a common neighbourhood in $V_{i}$, for $i \in \{t - 1, t\}$, of size
  \[
    (1 \pm \delta') |N_{i}| \bigg(\prod_{2 \leq j \leq t - 2} (1 \pm \tilde\eps) d_{ij} p \bigg) = (1 \pm \delta')^{2} n_i \bigg(\prod_{1 \leq j \leq t - 2} d_{ij} p \bigg) = (1 \pm \delta) n_i \bigg(\prod_{1 \leq j \leq t - 2} d_{ij} p \bigg).
  \]
 Since common neighbourhoods in $N_{t - 1}$ and $N_{t}$ of obtained copies of $K_{t - 3}$ span a $(\delta', p)$-regular pair of density $(1 \pm \delta') d_{(t - 1)t} p$ by induction hypothesis, it follows that all obtained copies of $K_{t - 2}$ have common neighbourhoods in $V_{t - 1}$ and $V_{t}$ which span a $(\delta, p)$-regular pair of density $(1 \pm \delta) d_{(t - 1)t} p$. Hence, all such copies are $\delta$-typical. This completes the proof of the lemma.
\end{proof}

We combine the properties of $t$-tuples given by the previous two lemmas into the following definition for the ease of further reference.
\begin{definition}[$(\delta, \eps)$-{\bf super-typical}]\label{def:super-typical}
  For an integer $t \geq 3$ we say that an $\eps$-typical $t$-tuple $(V_1, \ldots, V_t)$ is $(\delta, \eps)$-\emph{super-typical} if:
  \begin{enumerate}[(i), font=\itshape]
    \item\label{def:st-counting-lemma-small} there are
    \[
      (1 \pm \delta) \bigg( \prod_{2 \leq i \leq t - 1} |V_i| \bigg) \bigg( \prod_{2 \leq i < j \leq t - 1} d_{ij}p \bigg)
    \]
    canonical copies of $K_{t - 2}$ in $(V_2, \ldots, V_{t - 1})$,

    \item\label{def:st-counting-lemma-large} there are
    \[
      (1 \pm \delta) \bigg( \prod_{1 \leq i \leq t - 1} |V_i| \bigg) \bigg( \prod_{1 \leq i < j \leq t - 1} d_{ij}p \bigg) \quad \text{and} \quad (1 \pm \delta) \bigg( \prod_{2 \leq i \leq t} |V_i| \bigg) \bigg( \prod_{2 \leq i < j \leq t} d_{ij}p \bigg)
    \]
    canonical copies of $K_{t - 1}$ in $(V_1, \ldots, V_{t - 1})$ and $(V_2, \ldots, V_t)$, respectively, and

    \item\label{def:st-typical-cliques} there are at least
    \[
      (1 - \delta) \bigg( \prod_{2 \leq i \leq t - 1} |V_i| \bigg) \bigg( \prod_{2 \leq i < j \leq t - 1} d_{ij}p \bigg)
    \]
    canonical copies of $K_{t - 2}$ in $(V_2, \ldots, V_{t - 1})$ which are $\delta$-typical with respect to $V_1$ and $V_t$.
  \end{enumerate}
\end{definition}
In other words, the definition of super-typical tuples requires that almost all canonical copies of $K_{t - 2}$ in $(V_2, \ldots, V_{t - 1})$ have common neighbourhoods in $V_1$ and $V_t$ which are of the right size and form a $(\delta, p)$-regular pair. We illustrate a $(\delta, \eps)$-super-typical tuple in Figure~\ref{fig:super-typical} below.
\begin{figure}[!htbp]
  \centering
  \begin{subfigure}[t]{0.45\textwidth}
    \centering
    \includegraphics[scale=0.5]{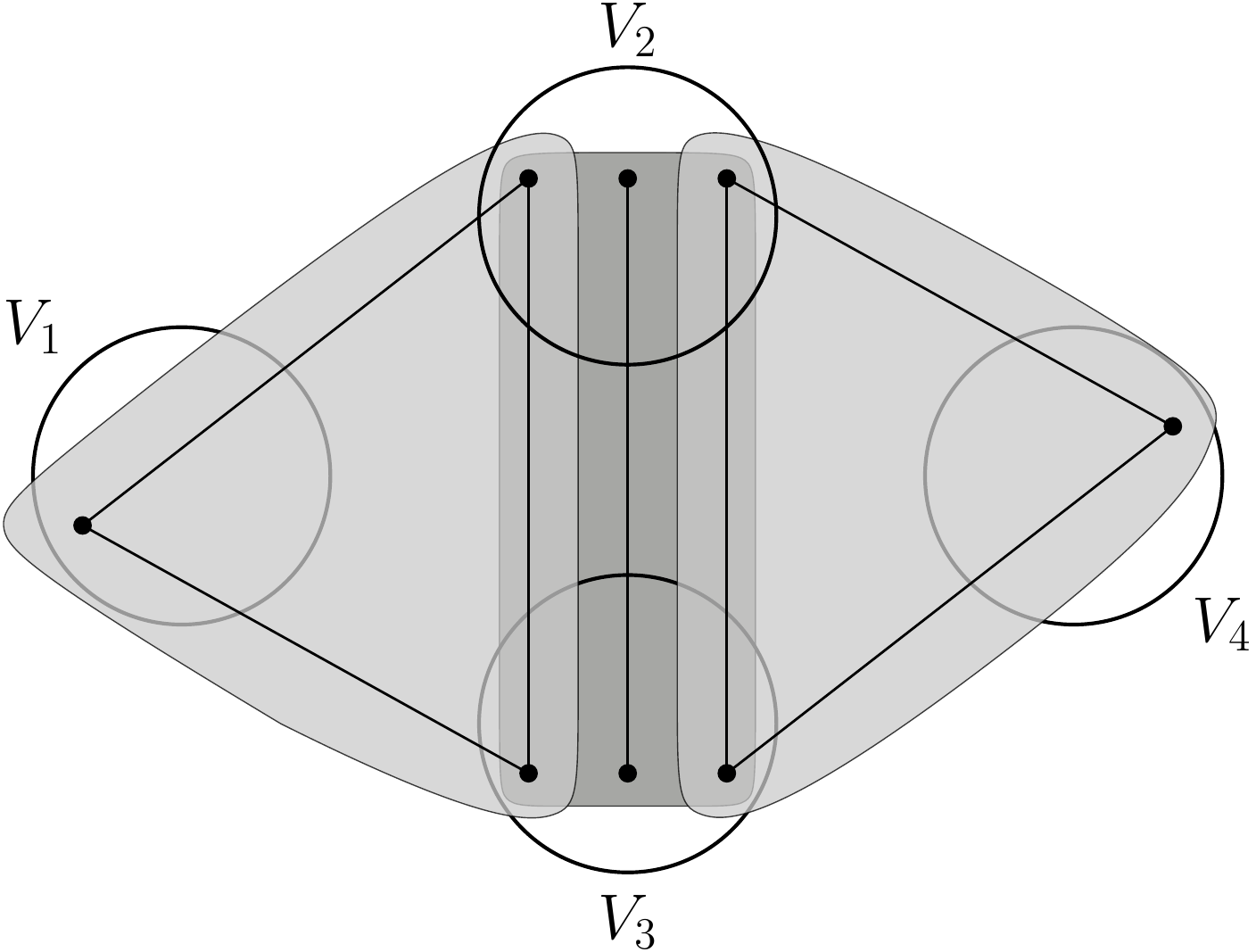}
    \caption{The number of copies of $K_3$ in $(V_1, V_2, V_3)$ and $(V_2, V_3, V_4)$ as well as $K_2$ in $(V_2, V_3)$ is roughly what one would expect in a random graph with pairs $(V_i, V_j)$ of density $d_{ij}p$.}
    \label{fig:super-typical-a}
  \end{subfigure}
  \hspace{2em}
  \begin{subfigure}[t]{0.45\textwidth}
    \centering
    \includegraphics[scale=0.5]{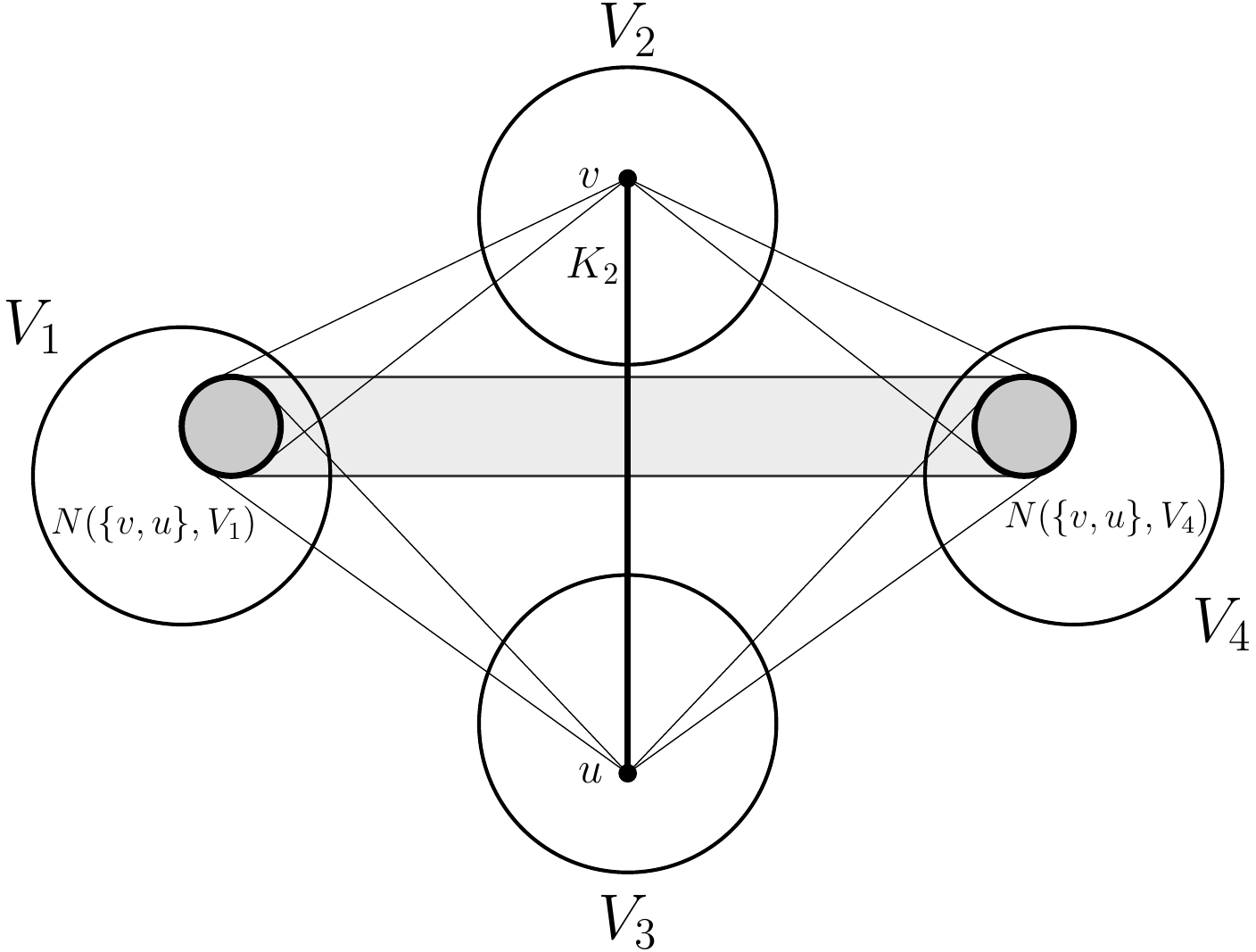}
    \caption{Common neighbourhoods in $V_1$ and $V_4$ of most of the copies of $K_2$ in $(V_2, V_3)$ are of size roughly as expected in a random graph with pairs $(V_i, V_j)$ of density $d_{ij}p$ and are in addition $(\delta, p)$-regular.}
    \label{fig:super-typical-b}
  \end{subfigure}
  \caption{$(\delta, \eps)$-super-typical $4$-tuple}
  \label{fig:super-typical}
\end{figure}

The notion from above allows us to claim that every time we encounter a $(\delta, \eps)$-super-typical $t$-tuple, the vertices of most of the copies of $K_{t - 1}$ in $(V_1, \ldots, V_{t - 1})$ have at least one common neighbour in $V_{t}$, as the induced copy of $K_{t - 2}$ in $(V_2, \ldots, V_{t - 1})$ is typically such that its neighbourhoods in $V_1$ and $V_t$ form a $(\delta, p)$-regular pair. This property turns out to be crucial in the embedding of a long $k$-cycle into $G$.

We conclude this section with a corollary of the previous two lemmas as well as Lemma~\ref{cor:regular-to-typical}, stating that every subgraph of $\GNp$ that belongs to the class $\cG(K_t, n, \eta, \alpha, \eps, p)$ is w.h.p.\ $(\delta, \eps')$-super-typical for sufficiently small $\eps$ compared to $\delta$ and $\eps'$.
\begin{corollary}\label{cor:regular-to-super-typical}
  Let $t \geq 3$ be an integer. Then for all $\alpha, \eps', \delta, \eta > 0$, there exist positive constants $\eps_0(\alpha, \eps', \delta, \eta, t)$, $C(\alpha, \eps', \delta, \eta, t)$, and $c(\alpha, \eps', \delta, \eta, t)$ such that for every integer $n \geq \max\{ Cp^{-t + 1}, Cp^{-t + 2}\log{N} \}$ a random graph $\GNp$ satisfies the following with probability at least $1 - e^{-c \cdot n^2 p^{2(t-3) + 1}}$. Every subgraph $G$ of $\GNp$ in $\cG(K_{t}, n, \eta, \alpha, \eps, p)$ is $(\delta, \eps')$-super-typical, provided that $\eps \leq \eps_0$.
\end{corollary}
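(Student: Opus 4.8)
The plan is simply to combine the three preceding results -- Corollary~\ref{cor:regular-to-typical}, Lemma~\ref{lem:counting-lemma}, and Lemma~\ref{lem:typical-cliques-counting} -- each applied with appropriately chosen parameters and smaller clique sizes, and then to intersect the corresponding good events for $\GNp$. No new idea is needed: the content is entirely in matching up parameters, clique sizes, the lower bound on $n$, and the failure probabilities.

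First I would fix the constants. Given $t \geq 3$ and $\alpha, \eps', \delta, \eta > 0$, let $\eps_0$ be the minimum of the thresholds supplied by Corollary~\ref{cor:regular-to-typical} applied with $(\alpha, \eps', \eta, t)$, by Lemma~\ref{lem:counting-lemma} applied with clique size $t - 1$ and (when $t \geq 4$) with clique size $t - 2$, each with parameters $(\alpha, \delta, \eta)$, and by Lemma~\ref{lem:typical-cliques-counting} applied with clique size $t$ and parameters $(\alpha, \delta, \eta)$; let $C$ be the maximum of the corresponding constants $C$ and $c$ a sufficiently small multiple of the minimum of the corresponding constants $c$. The size requirements coming from Lemma~\ref{lem:counting-lemma} with clique size $t-1$ and from Lemma~\ref{lem:typical-cliques-counting} with clique size $t$ are both exactly $n \geq \max\{Cp^{-t + 1}, Cp^{-t + 2}\log N\}$, while those from Corollary~\ref{cor:regular-to-typical} and from Lemma~\ref{lem:counting-lemma} with clique size $t-2$ are weaker, so the hypothesis of the corollary is precisely what is needed to invoke all of them. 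Likewise, the failure probabilities of the clique-size-$(t-1)$ application of Lemma~\ref{lem:counting-lemma} and of the clique-size-$t$ application of Lemma~\ref{lem:typical-cliques-counting} are both $e^{-\Omega(n^2 p^{2(t - 3) + 1})}$ and dominate the remaining ones (for $t = 3$ all the exponents equal $n^2 p$), so a union bound over these finitely many events leaves probability at least $1 - e^{-c \cdot n^2 p^{2(t - 3) + 1}}$ after shrinking $c$.

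Now I would condition on all these good events and fix an arbitrary subgraph $G$ of $\GNp$ lying in $\cG(K_t, n, \eta, \alpha, \eps, p)$, with classes $V_1, \ldots, V_t$ and densities $d_{ij}p$, where $\eps \leq \eps_0$. Corollary~\ref{cor:regular-to-typical} (clique size $t$) gives that $G$ is $\eps'$-typical, which is the first requirement of Definition~\ref{def:super-typical}. For item~\ref{def:st-counting-lemma-small} I would apply Lemma~\ref{lem:counting-lemma} with clique size $t - 2$ to the induced sub-tuple $(V_2, \ldots, V_{t - 1})$, which itself belongs to $\cG(K_{t - 2}, n, \eta, \alpha, \eps, p)$ and is a subgraph of $\GNp$ (when $t = 3$ this item is the trivial identity $|V_2| = (1 \pm \delta)|V_2|$ and there is nothing to prove). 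For item~\ref{def:st-counting-lemma-large} I would apply Lemma~\ref{lem:counting-lemma} with clique size $t - 1$ to each of the sub-tuples $(V_1, \ldots, V_{t - 1})$ and $(V_2, \ldots, V_t)$. For item~\ref{def:st-typical-cliques} I would apply Lemma~\ref{lem:typical-cliques-counting} with clique size $t$ to the \emph{reordered} tuple $(V_2, V_3, \ldots, V_{t - 1}, V_1, V_t)$, which is again a member of $\cG(K_t, n, \eta, \alpha, \eps, p)$: its conclusion yields at least $(1 - \delta)\bigl(\prod_{2 \leq i \leq t - 1}|V_i|\bigr)\bigl(\prod_{2 \leq i < j \leq t - 1}d_{ij}p\bigr)$ canonical copies of $K_{t - 2}$ in $(V_2, \ldots, V_{t - 1})$ that are $\delta$-typical with respect to the last two classes of the reordered tuple, namely $V_1$ and $V_t$, which is exactly what is asked. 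Together with the $\eps'$-typicality, these three conclusions are precisely items~\ref{def:st-counting-lemma-small}--\ref{def:st-typical-cliques} of Definition~\ref{def:super-typical}, so $G$ is $(\delta, \eps')$-super-typical.

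The only points requiring any care -- and the nearest thing here to an obstacle -- are the reordering in item~\ref{def:st-typical-cliques}, where one must make sure the two distinguished classes appearing in Definition~\ref{def:typical-cliques} end up being $V_1$ and $V_t$ and not some other pair, and the bookkeeping verifying that $n \geq \max\{Cp^{-t + 1}, Cp^{-t + 2}\log N\}$ and the probability $1 - e^{-c \cdot n^2 p^{2(t - 3) + 1}}$ are indeed the binding constraints among the several applications. Both become routine once the indices are written out, and the base case $t = 3$ collapses to Corollary~\ref{cor:regular-to-typical} together with the $t = 3$ instance of Lemma~\ref{lem:typical-cliques-counting}, items~\ref{def:st-counting-lemma-small} and~\ref{def:st-counting-lemma-large} being then immediate from the definition of the class $\cG$.
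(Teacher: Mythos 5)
Your proposal is correct and follows essentially the same route as the paper: fix constants as the min/max of those supplied by Corollary~\ref{cor:regular-to-typical}, Lemma~\ref{lem:counting-lemma} (for clique sizes $t-2$ and $t-1$), and Lemma~\ref{lem:typical-cliques-counting} (for clique size $t$), intersect the good events by a union bound (with the $t=3$ case handled separately via typicality), and read off the three items of Definition~\ref{def:super-typical} from the respective conclusions. Your explicit remark about reordering the tuple to $(V_2,\ldots,V_{t-1},V_1,V_t)$ for item~\ref{def:st-typical-cliques} is a point the paper leaves implicit, and your bookkeeping of the binding size and probability constraints matches the paper's.
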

\begin{proof}
  If $t = 3$ then just by choosing $\eps_0 = \eps_{0_{\ref{cor:regular-to-typical}}}(\alpha, \min\{\delta, \eps'\}, \eta, 3)$, $C = C_{\ref{cor:regular-to-typical}}(\alpha, \min\{\delta, \eps'\}, \eta, 3)$, $c = c_{\ref{cor:regular-to-typical}}(\alpha, \min\{\delta, \eps'\}, \eta),3$ and applying Corollary~\ref{cor:regular-to-typical} to $G \in \cG(K_3, n, \eta, \alpha, \eps, p)$ with $t = 3$, $\alpha$, $\delta$ as $\eps'$, and $\eta$, we get that $G$ is $(\delta, \eps')$-super-typical with probability at least $1 - e^{-c n^2 p}$.
   Otherwise, for $t \geq 4$, given $\alpha, \delta, \eta$, let
  \[
    \eps_0 = \min\{ \eps_{0_{\ref{cor:regular-to-typical}}}(\alpha, \eps', \eta, t), \eps_{0_{\ref{lem:counting-lemma}}}(\alpha, \delta,\eta, t - 2), \eps_{0_{\ref{lem:counting-lemma}}}(\alpha, \delta, \eta, t - 1), \eps_{0_{\ref{lem:typical-cliques-counting}}}(\alpha, \delta, t) \},
  \]
  \[
    C = \max\{ C_{\ref{cor:regular-to-typical}}(\alpha, \eps', \eta, t), C_{\ref{lem:counting-lemma}}(\alpha, \delta, \eta, t- 2),
    C_{\ref{lem:counting-lemma}}(\alpha, \delta, \eta, t- 1), C_{\ref{lem:typical-cliques-counting}}(\alpha, \delta, \eta, t) \},
  \]
  and take $c'_1 = c_{\ref{cor:regular-to-typical}}(\alpha, \eps', \eta, t)$, $c'_2 = c_{\ref{lem:counting-lemma}}(\alpha, \delta, \eta, t-2)$, $c'_3 = c_{\ref{lem:counting-lemma}}(\alpha, \delta, \eta, t-1)$, $c'_4 = c_{\ref{lem:typical-cliques-counting}}(\alpha, \delta, \eta, t)$, and $c = \min\{ c'_1, c'_2, c'_3, c'_4 \}/10$. We assume that $\GNp$ is such that the conclusions of Corollary~\ref{cor:regular-to-typical}, Lemma~\ref{lem:counting-lemma} both for $t - 2$ and $t - 1$, and Lemma~\ref{lem:typical-cliques-counting}, hold. This happens with probability at least
  \[
    1 - e^{-c_1' \cdot n^2p} - e^{-c_2' \cdot n^2p^{2(t - 3) + 1}} - e^{-c_3' \cdot n^2p^{2(t - 4) + 1}} - e^{-c_4' \cdot n^2p^{2(t - 3) + 1}} \geq 1 - e^{-c \cdot n^2 p^{2(t - 3) + 1}}.
  \]

  Let $G$ be a subgraph of $\GNp$ which belongs to the class $\cG(K_t, n, \eta, \alpha, \eps, p)$. By Corollary~\ref{cor:regular-to-typical} and our choice of $\eps_0$, $G$ is $\eps'$-typical. Moreover, by Lemma~\ref{lem:counting-lemma} applied for $t - 1$ as $t$, the number of copies of $K_{t - 1}$ in both $(V_1, \ldots, V_{t - 1})$ and $(V_2, \ldots, V_t)$ is as required by Definition~\ref{def:super-typical}~$\ref{def:st-counting-lemma-large}$. Similarly, Lemma~\ref{lem:counting-lemma} applied this time for $t - 2$ as $t$ shows that the number of copies of $K_{t - 2}$ in $(V_2, \ldots, V_{t - 1})$ is again as required. Lastly, graph $G$ satisfies the assertion of Lemma~\ref{lem:typical-cliques-counting} applied for $t$, that is it contains the required number of $\delta$-typical canonical copies of $K_{t - 2}$ as in Definition~\ref{def:super-typical}~$\ref{def:st-typical-cliques}$.
\end{proof}

\section{Proof of the Main Theorem}\label{sec:main-proof}

The overall structure of our proof follows the standard approach for embedding structures into sparse graphs. First we apply the sparse regularity lemma, more precisely, the version given by Corollary~\ref{cor:nice-regularity}, to obtain an $(\eps, p)$-regular partition of $V(G)$. Then we find a $k$-cycle on the partition classes using the result for dense graphs, i.e.\ Theorem~\ref{thm:KSS}. As usual, the main work consists of subsequently using this $k$-cycle on the partition classes to find an almost spanning $k$-cycle in the original graph. To achieve this, our main tool is a \textit{clique expansion lemma} (Lemma~\ref{lem:clique-expansion-lemma}) that roughly states the following. Suppose we have an $\ell$-tuple $(V_1, \ldots, V_{\ell})$ of sets of size $n$ so that for all $i \in [\ell - k]$ the tuple $(V_{i}, \ldots, V_{i + k})$ is $(\delta, \eps)$-super-typical. Then every $\delta$-fraction of the canonical copies of $K_k$ in $(V_{1}, \ldots, V_{k})$ contains a copy $K$ such that almost all canonical copies of $K_k$ in $(V_{\ell - k + 1}, \ldots, V_{\ell})$ can be reached by a $k$-path in $(V_1, \ldots, V_{\ell})$ that starts in $K$.

With such a clique expansion lemma at hand, the theorem can then be proven using a standard approach. We briefly explain the main idea. Assume that the value of $\ell$ (from the clique expansion lemma) is smaller than the number $t$ of partition classes in the $(\eps, p)$-regular partition of $V(G)$. Assume furthermore that the partition classes $V_1, \ldots, V_t$ form a $k$-cycle, that is $(V_i, V_j)$ is an $(\eps, p)$-regular pair of positive density whenever $|i - j| \leq k$. By choosing constants appropriately and conditioning on the fact that the underlying random graph $\GNp$ is `nice'---most notably satisfying Corollary~\ref{cor:regular-to-super-typical}---we may assume that any tuple $(V'_{i}, \ldots, V'_{i + \ell - 1})$ with $V'_j \subseteq V_j$ and $|V'_j| = \eps |V_j|$ satisfies the assumption of the clique expansion lemma. By considering $(V'_{1}, \ldots, V'_{\ell})$ we thus find a $K \in K_k(V'_{1}, \ldots, V'_{k})$ that expands to almost all canonical copies of $K_k$ in $(V'_{\ell - k + 1}, \ldots, V'_{\ell})$. Now consider a tuple $(V'_{\ell - k + 1}, \ldots, V'_{2\ell - k})$. Repeating the expansion argument we see that there is a $k$-{\em path} from $K$ to some $K' \in K_k(V'_{\ell - k + 1}, \ldots, V'_{\ell})$ such that $K'$ expands to almost all canonical copies of $K_k$ in $(V'_{2\ell - 2k + 1}, \ldots, V'_{2\ell - k})$. Clearly, we can repeat this process until we have covered all but at most $\eps |V_i|$ vertices in each partition class $V_i$, thereby obtaining an almost spanning $k$-path. Moreover, the usual modifications of this approach allow us to actually find an almost spanning $k$-cycle.

Our plan for the rest of the section is as follows. In Section~\ref{sec:clique-expansion} we state and give a proof of the clique expansion lemma. Unfortunately, our proof requires that $\ell$ is logarithmic in $N$. This is where Lemma~\ref{lem:regularity-partitioning} comes into the game: starting from a basic regular partition with a constant number of classes (as given by the sparse regularity lemma) we can obtain a regular partition with logarithmically many partition classes, to which we can then apply the embedding strategy outlined above. The price we have to pay for extending the standard partition to one with logarithmically many classes is that we have to make $p$ slightly larger, in order to inherit regularity between the smaller classes.

\subsection{Clique expansion lemma}\label{sec:clique-expansion}
For an integer $\ell$ and a graph $G \in \cG(P_\ell^k, n, \eta, \alpha, \eps, p)$ on the vertex partition $V_1, \ldots, V_{\ell}$, we say that a subset $\cK_k' \subseteq K_k(V_i, \ldots, V_{i + k - 1})$ \emph{expands} to a subset $\cK_k'' \subseteq K_k(V_j, \ldots, V_{j + k - 1})$ for some $1 \leq i < j \leq \ell - k + 1$, if for every $H'' \in \cK_k''$ there exist $H' \in \cK_k'$ and a canonical copy of $P^k_{j + k - i}$ in $G[V_i, V_{i + 1}, \ldots, V_{j + k - 1}]$ which connects $H'$ with $H''$.

Denote by $\cG_\delta(P^{k}_{\ell}, n, \alpha, \eps, p) \subseteq \cG(P^{k}_{\ell}, n, 1, \alpha, \eps, p)$ the class of graphs for which $|V_i| = n$ and $(V_i, \ldots, V_{i + k})$ is $(\delta, \eps)$-super-typical for all $i \in [\ell - k]$. Recall that we have $\cG(P^{k}_{\ell}, n, 1, \alpha, \eps', p)\subseteq \cG(P^{k}_{\ell}, n, 1, \alpha, \eps, p)$ for all $\eps'\le \eps$. Corollary~\ref{cor:regular-to-super-typical} thus implies that, for $\eps'$ small enough, w.h.p.\ a random graph $\GNp$ is such that all graphs from $\cG(P^{k}_{\ell}, n, 1, \alpha, \eps', p)$ which are subgraphs $\GNp$ also belong to $\cG_\delta(P^{k}_\ell, n, \alpha, \eps, p)$. With this definition at hand we state the clique expansion lemma.
\begin{lemma}\label{lem:clique-expansion-lemma}
  Let $k \geq 2$ be an integer. For all $\alpha > 0$ and $0 < \delta < 1/(20k)$, there exist positive constants $\eps_0(\alpha, \delta, k)$, $C(\alpha, \delta, k)$, and $c(\alpha, \delta, k)$ such that for every $\ell$, where $3k^2 \log N \leq \ell \leq N$, and every integer $n \geq \max\{Cp^{-k}, Cp^{-k + 1}\log{N}\}$ a random graph $\GNp$ satisfies the following with probability at least $1-e^{-c n^2 p^{2(k-2) +1}}$. Every subgraph $G$ of $\GNp$ in $\cG_\delta(P_{\ell}^{k}, n, \alpha, \eps, p)$ with $\eps \leq \eps_0$, is such that for every $\cK \subseteq K_{k}(V_1, \ldots, V_k)$ of size $|\cK| \geq \delta n^{v(K_{k})} (\alpha p)^{e(K_{k})}$, there exists $K \in \cK$ which expands to at least $(1 - 20k\delta) n^{v(K_{k})} (\alpha p)^{e(K_{k})}$ canonical copies of $K_k$ in $(V_{\ell - k + 1}, \ldots, V_{\ell})$.
\end{lemma}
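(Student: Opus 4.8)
Throughout write $M:=n^{v(K_k)}(\alpha p)^{e(K_k)}=n^k(\alpha p)^{\binom k2}$ and $m:=n(\alpha p)^{k-1}$. By the super-typicality hypothesis (recall $G\in\cG_\delta(P_\ell^k,\ldots)$, Definition~\ref{def:super-typical}) the number of canonical copies of $K_k$ in any $k$ consecutive parts is $(1\pm O(\delta))M$, and for every $i\in[\ell-k]$ all but a $\delta$-fraction of the canonical copies $W$ of $K_{k-1}$ in the \emph{middle} parts $(V_{i+1},\dots,V_{i+k-1})$ are $\delta$-typical with respect to the \emph{outer} parts $V_i,V_{i+k}$, i.e.\ $A_W:=N(W,V_i)$ and $B_W:=N(W,V_{i+k})$ have size $(1\pm O(\delta))m$ and form a $(\delta,p)$-regular pair of density $(1\pm O(\delta))\alpha p$. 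The plan is to first condition on the random graph being ``nice'': the conclusions of Proposition~\ref{prop:counting-cliques} (upper bounds on clique counts in all sufficiently large vertex subsets) and of Corollary~\ref{cor:small-subsets-inheritance} (inheritance of $(\eps,p)$-regularity to most pairs of small subsets) hold; both fail with probability $e^{-\Omega(n^2p^{2(k-2)+1})}$, which is absorbed into $c$, and from here the argument is deterministic. One elementary bound I use repeatedly is that, since $G\subseteq\GNp$, any single vertex has at most $(1+o(1))pn$ neighbours in any $V_j$; together with the clique counts this implies that in each tuple the canonical copies of $K_k$ passing through an \emph{atypical} middle clique number at most $O(\delta)M$.

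The combinatorial heart is a one-step analysis. A canonical $k$-path in $G[V_i,\dots,V_{i+k}]$ joining a $K_k$ of $(V_i,\dots,V_{i+k-1})$ to a $K_k$ of $(V_{i+1},\dots,V_{i+k})$ exists precisely when the two cliques share the same middle $(k-1)$-clique $W$ and their outer vertices $u\in A_W$, $w\in B_W$ satisfy $uw\in E(G)$. Hence, writing a reachable set $\cR\subseteq K_k(V_i,\dots,V_{i+k-1})$ as the disjoint union over $W$ of its ``slices'' $A_W^{\cR}:=\{u\in A_W:(u,W)\in\cR\}$, the copies through $W$ in the one-step image of $\cR$ inside $K_k(V_{i+1},\dots,V_{i+k})$ are exactly the $(W,w)$ with $w\in B_W$ having a neighbour in $A_W^{\cR}$. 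Using that $(A_W,B_W)$ is $(\delta,p)$-regular of density $\approx\alpha p$, whenever $|A_W^{\cR}|\ge\delta|A_W|$ this image slice covers all but a $\delta$-fraction of $B_W$, and I then track $\cR_1:=\cK,\cR_2,\cR_3,\dots$ along the path. The crucial phenomenon is that $|\cR_i|$ may drop by a constant factor in a single step, but once a middle clique is \emph{fully} reached it pushes essentially all of its outer neighbourhood forward, which forces essentially all $(k-2)$-cliques of the next tuple to ``fully feed'' the step after; iterating this up the $k$ nested levels of the clique structure shows that after $O(k)$ further steps $|\cR_i|$ climbs to $(1-O(k\delta))M$ and then stays there. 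This proves that $\cK$ \emph{collectively} expands to at least $(1-20k\delta)M$ canonical copies of $K_k$ in $(V_{\ell-k+1},\dots,V_\ell)$.

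To promote this to a \emph{single} $K\in\cK$ I would exploit that Definition~\ref{def:super-typical} is symmetric in the two outer parts, so the same expansion analysis applies to the reversed path $V_\ell,V_{\ell-1},\dots,V_1$; thus backward expansion from the target set $T:=K_k(V_{\ell-k+1},\dots,V_\ell)$ (which has $\ge(1-O(\delta))M\ge\delta M$ elements) also reaches all but $O(k\delta)M$ of $K_k(V_1,\dots,V_k)$. I then double-count over the ``reachability'' bipartite graph between $K_k(V_1,\dots,V_k)$ and $T$: if \emph{every} $K\in\cK$ expanded to fewer than $(1-20k\delta)M$ copies of $T$, summing the copies each misses would force a non-negligible fraction of the copies of $T$ to be reached backward from far fewer than $(1-O(k\delta))M$ copies of $K_k(V_1,\dots,V_k)$, contradicting the backward statement once all error terms are chosen sufficiently small compared with $\delta$.

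I expect the main obstacle to be exactly this last step, together with the bookkeeping behind it, and it is here that the hypothesis $\ell\ge 3k^2\log N$ is needed. For the contradiction to close one must drive the number of ``exceptional'' cliques at a given position — those whose individual forward (or backward) reach fails to become almost all of the relevant part because, at some level of the $k$-fold nested neighbourhood structure, the clique persistently meets atypical middle cliques — below roughly $\delta M$, and running the expansion for a logarithmic number of steps is what shrinks this exceptional set to the required size; controlling it cleanly, in particular at the very first step where $\cK$ could be adversarially supported on copies through atypical middle cliques, is the delicate part. A secondary point requiring care is the regime of small $\alpha$: there the naive implication ``relative density $\ge\delta$ forces the neighbourhood to cover a $(1-\delta)$-fraction'' fails for a $(\delta,p)$-regular pair of density $\approx\alpha p$, and one must instead route the estimates through the clique-counting and regularity-inheritance conclusions conditioned on above, at the cost of slightly worse — but still $O(k\delta)$ — error terms.
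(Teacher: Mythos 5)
Your collective-expansion analysis is in the spirit of the paper's Lemmas~\ref{lem:one-step-expansion} and~\ref{lem:main-expansion}, but the step where you promote collective expansion to a \emph{single} $K \in \cK$ — which is the actual content of Lemma~\ref{lem:clique-expansion-lemma} — does not work as proposed. Knowing (i) that every $\delta$-fraction of $K_k(V_1,\ldots,V_k)$ reaches all but $O(k\delta)M$ of the target copies, and (ii) the symmetric backward statement from $T = K_k(V_{\ell-k+1},\ldots,V_\ell)$, does not force any individual copy to have large reach: consider a reachability bipartite graph between the $\approx M$ start copies and $\approx M$ target copies that behaves like a random bipartite graph of density $1/2$. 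There every set of size $\delta M$ on either side has neighbourhood covering essentially the whole other side, so both your forward and backward collective statements hold, yet every single start copy reaches only about half of $T$, far below $(1-20k\delta)M$. The double count you sketch (summing the copies each $K$ misses) only shows that many target copies are missed by many sources, which is perfectly consistent with the backward expansion property; no contradiction arises. Your own closing paragraph flags this step as the delicate one, but the mechanism you gesture at (shrinking a set of cliques with bad individual reach by running the expansion longer) is not supplied and is not what makes the logarithmic length do its job.

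The paper closes this gap differently, via a halving argument along the long path (Claim~\ref{cl:halving}): if a subset $A \subseteq \cK$ expands to almost all copies at the current block, split $A$ into two halves; one half must account for at least half of those reached copies, which is still at least $\delta M$, so Lemma~\ref{lem:main-expansion} applied at the \emph{next} $2k$ consecutive parts upgrades this back to almost all copies one block further along. Each such step halves the candidate set at the cost of $k$ parts, and since $|\cK| \le N^k$, after roughly $k\log_2 N$ halvings the candidate set is a single clique $K$ that still expands to almost all copies near the end; a final $O(k)$ applications of Lemma~\ref{lem:one-step-expansion} finish the job. This is precisely where $\ell \ge 3k^2\log N$ enters. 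A secondary remark: your claim that iterating the one-step slice analysis makes $|\cR_i|$ climb from a $\delta$-fraction to $(1-O(k\delta))M$ within $O(k)$ steps is also doing real work that a one-step argument alone cannot deliver (one step only preserves, indeed slightly degrades, the fraction); in the paper this boosting is Lemma~\ref{lem:main-expansion}, proved by induction on $k$ inside neighbourhoods of typical vertices, so that part of your sketch would need to be fleshed out along similar lines.
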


We split the proof into several lemmas which, together with some additional observations, allow us to show the intended statement. First we show that the definition of a $(\delta, \eps)$-super-typical $(k + 1)$-tuple $(V_1, \ldots, V_{k + 1})$ allows to show that any large enough fraction of the canonical copies of $K_k$ in $(V_1, \ldots, V_k)$ expands to roughly the same fraction of the canonical copies in $(V_2, \ldots, V_{k+1})$---even if $V_1$ is much smaller than the other sets $V_i$. We give a brief explanation of why we find such a statement useful for what is to come. Assume we are given a $(\delta, \eps)$-super-typical $(k + 1)$-tuple $(V_1, \ldots, V_{k + 1})$ with $|V_1| = \ldots = |V_{k + 1}|$. Since $(\eps, p)$-regularity is inherited in large subsets (see, Proposition~\ref{prop:large-subsets-inheritance}), by taking any subset $\tilde V_1 \subseteq V_1$ that is not too small and applying Corollary~\ref{cor:regular-to-super-typical} to the tuple $(\tilde V_1, V_2, \ldots, V_{k + 1})$ we get that it is also $(\delta', \eps')$-super-typical for appropriate constants $\delta'$ and $\eps'$. Thus, following the reasoning from above, in such a tuple starting from almost all canonical copies of $K_k$ in $(\tilde V_1, V_2, \ldots, V_k)$ we expand in one step to almost all canonical copies of $K_k$ in $(V_2, \ldots, V_{k + 1})$. It is important to note that in the following lemma we do not require that the sets $V_i$ are of the same size.
\begin{lemma}\label{lem:one-step-expansion}
  Let $k \geq 2$ be an integer. Then for all $\alpha, \delta, \kappa > 0$, the following holds. Let $(V_1, \ldots, V_{k + 1})$ be a $(\delta, \eps)$-super-typical tuple with densities $d_{ij}p$ between $V_i$ and $V_j$. Then any set of at least $\kappa (\prod_{i = 1}^{k} |V_i|) (\prod_{1 \leq i < j \leq k} d_{ij}p)$ canonical copies of $K_k$ in $(V_1, \ldots, V_k)$ expands to at least $(\kappa - 3\kappa\delta - 6\delta) (\prod_{i = 1}^{k} |V_i|) (\prod_{2 \leq i < j \leq k + 1} d_{ij} p)$ canonical copies of $K_k$ in $(V_{2}, \ldots, V_{k + 1})$.
\end{lemma}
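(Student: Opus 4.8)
The plan is to exploit the three defining properties of $(\delta, \eps)$-super-typicality in the $(k+1)$-tuple $(V_1, \ldots, V_{k+1})$. Write $\cK \subseteq K_k(V_1, \ldots, V_k)$ for the given set of canonical $K_k$'s, with $|\cK| \geq \kappa (\prod_{i=1}^k |V_i|)(\prod_{1 \leq i < j \leq k} d_{ij}p)$. The key observation is that the $k$ vertices of any copy in $K_k(V_2, \ldots, V_{k+1})$ split into: a copy of $K_{k-1}$ in $(V_2, \ldots, V_k)$ (which we should think of as the ``overlap'' with $K_k(V_1, \ldots, V_k)$), together with one vertex in $V_{k+1}$. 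Likewise a copy in $\cK$ splits into a vertex in $V_1$ plus a copy of $K_{k-1}$ in $(V_2, \ldots, V_k)$. So the combinatorial structure that controls the expansion is the set $\cJ$ of canonical copies of $K_{k-1}$ in $(V_2, \ldots, V_k)$: a copy $J \in \cJ$ has an ``extension'' into $V_1$ (whose common neighbourhood we call $N_1(J) \subseteq V_1$) and into $V_{k+1}$ (common neighbourhood $N_{k+1}(J) \subseteq V_{k+1}$). A copy $K'' \in K_k(V_2, \ldots, V_{k+1})$ is reached from $\cK$ in one step precisely when its $K_{k-1}$-part $J$ has $N_1(J)$ intersecting the ``$V_1$-side'' of $\cK$; and we want to count pairs $(J, w)$ with $w \in N_{k+1}(J)$ and $J$ ``good'' in this sense.

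The steps, in order. First, use property~\ref{def:st-typical-cliques} of Definition~\ref{def:super-typical}: all but at most $\delta (\prod_{2 \leq i \leq k}|V_i|)(\prod_{2 \leq i < j \leq k} d_{ij}p)$ of the copies $J \in \cJ$ are $\delta$-typical with respect to $V_1$ and $V_{k+1}$, meaning $|N_1(J)| = (1 \pm \delta)|V_1|\prod_{2 \leq i \leq k} d_{1i}p$ and $|N_{k+1}(J)| = (1 \pm \delta)|V_{k+1}|\prod_{2 \leq i \leq k} d_{i(k+1)}p$. Call these the typical $J$'s; by property~\ref{def:st-counting-lemma-small} there are $(1 \pm \delta)(\prod_{2 \leq i \leq k}|V_i|)(\prod_{2 \leq i < j \leq k} d_{ij}p)$ copies in $\cJ$ in total, so the typical ones number at least $(1 - 2\delta)$ times that. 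Second, decompose $\cK$ over $\cJ$: each $K' \in \cK$ projects to some $J \in \cJ$, and the number of $K'$ projecting onto a \emph{non-typical} $J$ is at most $|V_1|$ each, hence at most $\delta(\prod_{i=1}^k |V_i|)(\prod_{2 \leq i < j \leq k} d_{ij}p) \leq (\delta/\alpha^{k-1})(\prod_{i=1}^k|V_i|)(\prod_{1 \leq i < j \leq k} d_{ij}p)$ in total — absorbing this into the $-6\delta$ slack (here $\prod_{1 \leq i < j \leq k} d_{ij}p = (\prod_{1 < j \leq k} d_{1j}p)(\prod_{2 \leq i < j \leq k} d_{ij}p)$ and each $d_{1j} \geq \alpha$, which lets one convert between the two normalisations at a cost polynomial in $1/\alpha$; the bookkeeping here is the kind of routine calculation to be suppressed). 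So a $(\kappa - O(\delta))$-fraction of $\cK$ projects onto typical $J$'s. Third, call a typical $J$ \emph{good} if $\cK$ contains at least one copy $K'$ projecting onto it (equivalently $N_1(J)$ meets the relevant $V_1$-set). Counting: the copies of $\cK$ over typical $J$'s, being at least $(\kappa - O(\delta))(\prod_{i=1}^k|V_i|)(\prod_{1\le i<j\le k}d_{ij}p)$ many, and each good $J$ accounting for at most $|N_1(J)| \leq (1+\delta)|V_1|\prod_{2 \leq i \leq k} d_{1i}p$ of them, forces the number of good $J$'s to be at least roughly $(\kappa - O(\delta))(\prod_{2 \leq i \leq k}|V_i|)(\prod_{2 \leq i < j \leq k} d_{ij}p)$ — i.e. a $(\kappa - O(\delta))$-fraction of all $\cJ$. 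Fourth, for each good $J$, \emph{every} vertex $w \in N_{k+1}(J)$ together with $J$ forms a copy in $K_k(V_2, \ldots, V_{k+1})$ that is reached from $\cK$ (prepend any $v \in N_1(J)$ hitting $\cK$: the $(k+1)$-path $v, J, w$ through $(V_1, \ldots, V_{k+1})$ is exactly the required $P^k_{k+1}$ — wait, more precisely the canonical copy of $P^k_{k+1}$ connecting the copy $\{v\} \cup J \in \cK$ to $\{J\} \cup \{w\}$, using that all pairs within distance $k$ are present). Since each good $J$ is $\delta$-typical it contributes $|N_{k+1}(J)| \geq (1-\delta)|V_{k+1}|\prod_{2 \leq i \leq k} d_{i(k+1)}p$ distinct such copies, and distinct $J$'s give distinct copies (the $K_{k-1}$-part determines $J$). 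Multiplying the count of good $J$'s by this lower bound yields at least $(\kappa - O(\delta))(1-\delta)(\prod_{i \neq 1, \text{all } i}|V_i|)\cdots$; re-normalising $(\prod_{2 \leq i \leq k}|V_i|) \cdot |V_{k+1}|$ back to the target normalisation $(\prod_{i=1}^{k}|V_i|)(\prod_{2 \leq i < j \leq k+1} d_{ij}p)$ again costs only factors of $|V_i|$ ratios and $d$'s, absorbed into the stated $-3\kappa\delta - 6\delta$ slack.

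The main obstacle is not any single step but the careful \emph{bookkeeping of normalisations}: the statement is phrased with the ``source'' normalisation $(\prod|V_i|)(\prod_{1 \leq i < j \leq k}d_{ij}p)$ on the hypothesis and the ``target'' normalisation $(\prod|V_i|)(\prod_{2 \leq i < j \leq k+1}d_{ij}p)$ on the conclusion, and one has to pass through the intermediate $K_{k-1}$-normalisation $(\prod_{2 \leq i \leq k}|V_i|)(\prod_{2 \leq i < j \leq k}d_{ij}p)$ twice, each time converting $(1 \pm \delta)$-type errors from Definition~\ref{def:super-typical} into additive losses in $\kappa$. One must check that the $d_{ij}$ involved in the ``peeling off $V_1$'' and ``gluing on $V_{k+1}$'' steps — namely $d_{1j}$ and $d_{j(k+1)}$ for $2 \leq j \leq k$ — cancel correctly against the extra factors in $(\prod_{1 \leq i < j \leq k}d_{ij}p)$ versus $(\prod_{2 \leq i < j \leq k+1}d_{ij}p)$; they do, since both products contain $\prod_{2 \leq i < j \leq k}d_{ij}p$ and differ only in the ``first-column'' resp.\ ``last-row'' factors, which are precisely the typicality bounds used. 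Once one is disciplined about carrying the error terms, the choice of constants — e.g. using $\delta < 1/(20k)$ only implicitly, here only $\alpha, \delta, \kappa$ enter — makes all the inequalities go through with the claimed $(\kappa - 3\kappa\delta - 6\delta)$ final bound, the $3\kappa\delta$ coming from the two $(1\pm\delta)$-renormalisations applied to the $\kappa$-sized set and the $6\delta$ from the handful of additive $O(\delta)$ error terms (non-typical $J$'s, the $\cK$-mass over them, and the count/overcount slack).
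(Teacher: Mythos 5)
Your overall architecture (peel off $V_1$, work with the copies of $K_{k-1}$ in $(V_2,\ldots,V_k)$, glue on $V_{k+1}$, and do the $1\pm\delta$ bookkeeping) matches the paper's, but there is a genuine gap at the decisive step, your ``Fourth'' point. The connecting structure required by the definition of expansion is a canonical copy of $P^k_{k+1}$ in $G[V_1,\ldots,V_{k+1}]$, and since a path on $k+1$ vertices has diameter exactly $k$, this power is the \emph{complete} graph $K_{k+1}$: the vertex $v\in V_1$ you prepend and the vertex $w\in V_{k+1}$ you append must themselves be adjacent. Your parenthetical ``using that all pairs within distance $k$ are present'' assumes exactly this edge without justification. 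It is false in general: a fixed $v\in N_1(J)$ is adjacent to only a $O(p)$-fraction of $N_{k+1}(J)$, so declaring $J$ ``good'' as soon as a \emph{single} copy of $\cK$ projects onto it, and then claiming that \emph{every} $w\in N_{k+1}(J)$ is reached, does not work — with only one available $v$ you reach only about $d_{1(k+1)}p\,|N_{k+1}(J)|$ of them, which is negligible.

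This is precisely where the second, regularity, half of $\delta$-typicality (which your sketch omits — you only quote the size condition on $N_1(J)$ and $N_{k+1}(J)$) is needed, and why the paper's threshold for a useful $J$ is not ``at least one extension'' but ``at least $2\delta|V_1|\prod_{i=2}^{k}d_{1i}p$ extensions in $\cK$''. With that threshold the set $N_1'(J)$ of extending vertices satisfies $|N_1'(J)|\ge\delta|N_1(J)|$, and since $(N_1(J),N_{k+1}(J))$ is a $(\delta,p)$-regular pair of density $(1\pm\delta)d_{1(k+1)}p$, every subset of $N_{k+1}(J)$ of size $\delta|N_{k+1}(J)|$ sends an edge to $N_1'(J)$; hence all but at most $\delta|N_{k+1}(J)|$ vertices $w$ have a neighbour $v\in N_1'(J)$, and $\{v\}\cup J\cup\{w\}$ is the required $K_{k+1}$. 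Strengthening your notion of ``good'' to this threshold (the extra copies of $\cK$ lost to $J$'s below the threshold are absorbed into the $6\delta$ slack, exactly as in the paper) and invoking the regular-pair condition repairs the argument; without it, the conclusion of your fourth step simply does not follow.
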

\begin{proof}
  The proof is solely based on the properties of super-typical tuples. More precisely, we use that the number of all canonical copies of $K_k$ and $K_{k - 1}$ is within a factor of $1 \pm \delta$ of their expectation and that all but a tiny fraction of the canonical copies of $K_{k - 1}$ in $(V_2,\ldots,V_k)$ are $\delta$-typical.

  Firstly, observe that Definition~\ref{def:typical-cliques}~$\ref{typical-neighbourhoods-size}$ implies a lower bound on the number of canonical copies of $K_k$ in $(V_1, \ldots, V_k)$ that a single $\delta$-typical copy of $K_{k-1}$ in $(V_2, \ldots, V_k)$ contributes to. Together with the lower bound on the number of $\delta$-typical copies in $(V_2, \ldots, V_k)$ given by Definition~\ref{def:super-typical}~$\ref{def:st-typical-cliques}$, we conclude that there are at least
  \[
    (1 - \delta) \bigg( \prod_{i = 2}^{k} |V_i| \bigg) \bigg( \prod_{2 \leq i < j \leq k} d_{ij}p \bigg) \cdot (1 - \delta) |V_1| \bigg( \prod_{i = 2}^{k} d_{1i}p \bigg) \geq (1 - 2\delta) \bigg( \prod_{i = 1}^{k} |V_i| \bigg) \bigg( \prod_{1 \leq i < j \leq k} d_{ij}p \bigg)
  \]
  copies of $K_k$ in $(V_1, \ldots, V_k)$ which have a $\delta$-typical copy of $K_{k - 1}$ as an induced subgraph. Combining it with the upper bound on the number of copies of $K_k$ in $(V_1, \ldots, V_k)$ given by Definition~\ref{def:super-typical}~$\ref{def:st-counting-lemma-large}$, we deduce that there are at most $3\delta (\prod_{i = 1}^{k} |V_i|) (\prod_{1 \leq i < j \leq k} d_{ij} p)$ copies of $K_k$ in $(V_1, \ldots, V_k)$ which do not have a $\delta$-typical copy of $K_{k - 1}$ as an induced subgraph.

  Let now $\cK \subseteq K_k(V_1, \ldots, V_k)$ be an arbitrary set of size $\kappa (\prod_{i = 1}^{k} |V_i|) (\prod_{1 \leq i < j \leq k} d_{ij} p)$ and let $\tilde \cK$ be the set of all
  $\delta$-typical canonical copies of $K_{k - 1}$ in $(V_2, \ldots, V_k)$ which belong to at least $2\delta |V_1| (\prod_{i = 2}^{k} d_{1i}p)$ copies of $K_k$ from $\cK$. In order to derive a lower bound on $|\tilde \cK|$ note that by the argument from above it follows that at least
  \[
    (\kappa - 3\delta) \bigg( \prod_{i = 1}^{k} |V_i| \bigg) \bigg( \prod_{1 \leq i < j \leq k} d_{ij} p \bigg)
  \]
  copies of $K_k$ in $\cK$ have an induced $\delta$-typical copy of $K_{k - 1}$ in $(V_2, \ldots, V_k)$. Furthermore, the number of copies of $K_k$ in $\cK$ that contain a canonical copy of $K_{k - 1}$ which is {\em not} contained in at least $2\delta |V_1| (\prod_{i = 2}^{k} d_{1i}p)$ copies of $K_k$ from $\cK$ is bounded by
  \[
    2\delta |V_1| \bigg( \prod_{i = 2}^{k} d_{1i}p \bigg) \cdot (1 + \delta) \bigg( \prod_{i = 2}^{k} |V_i| \bigg) \bigg( \prod_{2 \leq i < j \leq k} d_{ij}p \bigg)
  \]
  where the second factor comes from the upper bound on the total number of canonical copies of $K_{k - 1}$ in $(V_2, \ldots, V_{k})$. Finally, by Definition~\ref{def:typical-cliques}~$\ref{typical-neighbourhoods-size}$, each $\delta$-typical copy of $K_{k - 1}$ is contained in at most $(1 + \delta) |V_1| (\prod_{i = 2}^{k} d_{1i}p)$ canonical copies of $K_k$, and hence it holds that
  \begin{eqnarray*}
    |\tilde \cK| &\geq& \frac{(\kappa - 3\delta)(\prod_{i = 1}^{k} |V_i|) (\prod_{1 \leq i < j \leq k} d_{ij} p) - 2\delta(1 + \delta) (\prod_{i = 1}^{k} |V_i|) (\prod_{1 \leq i < j \leq k} d_{ij} p)}{(1 + \delta) |V_1| (\prod_{i = 2}^{k} d_{1i}p)} \\
    &\geq& (\kappa - \kappa\delta - 6\delta) \bigg( \prod_{i = 2}^{k} |V_i| \bigg) \bigg( \prod_{2 \leq i < j \leq k} d_{ij} p \bigg).
  \end{eqnarray*}
  Consider an arbitrary $K \in \tilde \cK$ and let $N_1 := N(K, V_1)$ and $N_{k+1} := N(K, V_{k+1})$. By Definition~\ref{def:typical-cliques}~$\ref{typical-neighbourhoods-regular}$ we know that $(N_1, N_{k + 1})$ is a $(\delta, p)$-regular pair of density $(1 \pm \delta)d_{1(k + 1)}p$. Let $N_1'$ denote the vertices in $V_1$ which together with $K$ form a copy of $K_{k}$ which belongs to $\cK$. By our choice of the set $\tilde\cK$ we know that $|N_1'| \ge 2\delta |V_1| (\prod_{i = 2}^{k} d_{1i}p)$ and hence also $|N_1'| \geq \delta|N_1|$. Thus, there is an edge between $N_{1}'$ and any subset of $N_{k + 1}$ of size $\delta |N_{k + 1}|$. Consequently, $\cK$ expands to at
  least
  \[
    |\tilde \cK| (1 - \delta)|N_{k + 1}| \geq (\kappa - 3\kappa\delta - 6\delta) \bigg(\prod_{i = 2}^{k + 1} |V_i|\bigg) \bigg(\prod_{2 \leq i < j \leq k + 1} d_{ij} p \bigg),
  \]
  canonical copies of $K_k$ in $(V_2, \ldots, V_{k + 1})$. This completes the proof.
\end{proof}

The next lemma is the most important step in the proof of the clique expansion lemma. It roughly states that w.h.p.\ every subgraph $G$ of $\GNp$ that belongs to the class $\cG_\delta(P^k_{2k}, n, \alpha, \eps, p)$ is such that every small linear fraction of the canonical copies of $K_k$ in $(V_1, \ldots, V_k)$ expands to almost all copies of $K_k$ in $(V_{k + 1}, \ldots, V_{2k})$. For the convenience of the reader we first illustrate the proof for $k = 2$. Suppose we are given a graph $G \in \cG_\delta(P_{4}^{2}, n, \alpha, \eps, p)$. For any large enough set of edges $E'\subseteq E(V_1, V_2)$, properties of $(\eps, p)$-regular pairs imply that there is a not too small subset $\tilde V_2 \subseteq V_2$ so that every vertex $v \in \tilde V_2$ is incident to many edges in $E'$. As the neighbourhoods $N(v, V_1)$ and $N(v, V_3)$ span a regular pair, this shows that edges in $N_{E'}(v, V_1)$ are contained in many triangles with $v$ and $N(v, V_3)$. In particular, $E'$ expands to almost all edges in $(\tilde V_2, V_3)$. We may thus apply Lemma~\ref{lem:one-step-expansion} to $(\tilde V_2, V_3, V_4)$ to obtain the claimed expansion. The proof for larger values of $k$ proceeds inductively using similar techniques, however it is technically significantly more involved.
\begin{lemma}\label{lem:main-expansion}
  Let $k \geq 2$ be an integer. Then for all $0<\alpha, \delta <1$, there exist positive constants $\eps_0(\alpha, \delta, k)$, $C(\alpha, \delta, k)$, and $c(\alpha, \delta, k)$ such that for every integer $n \geq \max\{ Cp^{-k}, Cp^{-k + 1}\log N \}$ a random graph $\GNp$ satisfies the following with probability at least $1 - e^{-c n^2 p^{2(k-2) +1}}$. Every subgraph $G$ of $\GNp$ in $\cG_\delta(P^{k}_{2k}, n, \alpha, \eps, p)$, with $\eps \leq \eps_0$, is such that every set of $\delta n^{v(K_k)}(\alpha p)^{e(K_k)}$ canonical copies of $K_k$ in $(V_1, \ldots, V_k)$ expands to at least $(1 - 10\delta)n^{v(K_k)}(\alpha p)^{e(K_k)}$ canonical copies of $K_k$ in $(V_{k + 1}, \ldots, V_{2k})$.
\end{lemma}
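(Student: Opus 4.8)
I would prove the statement by induction on $k$, in both the base case and the inductive step first passing to a convenient high‑probability event: namely, I condition on $\GNp$ being such that (i) the conclusion of Corollary~\ref{cor:regular-to-super-typical} holds for \emph{every} $(\eps,p)$‑regular sub‑tuple of $(V_1,\dots,V_{2k})$ of the relevant sizes and densities, as well as for every tuple obtained from it by replacing one class by a subset of size at least $\Theta_{\delta}(n)$ or by the common neighbourhood of a clique on fewer than $k$ vertices (so that all such restrictions remain $(\delta_0,\eps')$‑super‑typical for a tiny $\delta_0=\delta_0(\alpha,\delta,k)$), (ii) the conclusion of Proposition~\ref{prop:counting-cliques} holds, and (iii) the induction hypothesis (the statement for $k-1$) applies to the $(2k-2)$‑tuple $(V_2,\dots,V_{2k-1})$. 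Since $p\le 1$ gives $p^{2(k-3)+1}\ge p^{2(k-2)+1}$ and the sets involved have size $\Theta(n)$, all of these events together fail with probability at most $e^{-c\,n^2p^{2(k-2)+1}}$, which is what we want; here $\eps_0,C,c$ are chosen in the usual cascading manner, with $\eps_0$ small compared to $\delta_0$, and $n\ge\max\{Cp^{-k},Cp^{-k+1}\log N\}$ large enough that Corollary~\ref{cor:regular-to-super-typical} and the level‑$(k-1)$ version of the lemma both apply (this is exactly why the hypothesis asks for $p^{-k}$ and not merely $p^{-k+1}$: one layer of the recursion passes to common neighbourhoods of size $\approx\alpha np$).

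\emph{Base case $k=2$.} This is the argument indicated in the paper. Given $\cK=E'\subseteq E(V_1,V_2)$ of the stated size, a counting argument produces $\tilde V_2\subseteq V_2$ with $|\tilde V_2|\ge\delta' n$ such that every $v\in\tilde V_2$ has $|N_{E'}(v,V_1)|\ge\tfrac{\delta}{2}|N(v,V_1)|$ (otherwise too few edges of $E'$ would touch $\tilde V_2$, contradicting $|E'|\ge\delta n^2\alpha p$ and $e(V_1,V_2)\approx\alpha n^2 p$). For each $\eps$‑typical $v\in\tilde V_2$, since $\tfrac{\delta}{2}\ge\eps$ and $(N(v,V_1),N(v,V_3))$ is $(\eps,p)$‑regular of positive density, all but at most $\eps|N(v,V_3)|$ vertices $w\in N(v,V_3)$ have a neighbour $u\in N_{E'}(v,V_1)$, and then the triangle $uvw$ (a canonical $P_3^2$) witnesses that $\cK$ expands to the edge $vw$; taking the union over $v\in\tilde V_2$ shows $\cK$ expands to all but an $O(\eps/\delta)$‑fraction of the edges of $(\tilde V_2,V_3)$. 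Finally $(\tilde V_2,V_3,V_4)$ is $(\delta_0,\eps')$‑super‑typical by Proposition~\ref{prop:large-subsets-inheritance} together with the conditioned event, so Lemma~\ref{lem:one-step-expansion} applied to it with $\kappa=1-O(\eps/\delta)$ converts this almost‑all‑of‑$(\tilde V_2,V_3)$ into at least $(\kappa-3\kappa\delta_0-6\delta_0)|V_3||V_4|d_{34}p\ge(1-10\delta)n^{v(K_2)}(\alpha p)^{e(K_2)}$ edges of $(V_3,V_4)$.

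\emph{Inductive step, $k\ge3$.} Given $\cK\subseteq K_k(V_1,\dots,V_k)$ of the stated size, I double‑count over the $K_{k-1}$‑copies $T$ in $(V_2,\dots,V_k)$, each weighted by $|A_T|$ where $A_T:=\{x\in V_1:\{x\}\cup T\in\cK\}\subseteq N(T,V_1)$; using that the total number of $K_k$'s in $(V_1,\dots,V_k)$ is $(1\pm\delta_0)n^{v(K_k)}(\alpha p)^{e(K_k)}$ and that the non‑$\delta_0$‑typical tails contribute negligibly (their number is $\le2\delta_0\cdot(\text{total})$ by Definition~\ref{def:super-typical}, and $\delta_0$ is chosen small compared to $\alpha^{k-1}$), one extracts a family $\tilde\cK$ of $\delta_0$‑typical \emph{heavily‑hit tails} $T$ with $|A_T|\ge\delta'|N(T,V_1)|$ and $|\tilde\cK|\ge\delta'\,n^{v(K_{k-1})}(\alpha p)^{e(K_{k-1})}$, for a suitable $\delta'=\delta'(\alpha,\delta,k)>0$. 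Applying the induction hypothesis at level $k-1$ (with parameter $\min\{\delta',\delta/(100k)\}$) to the $(2k-2)$‑tuple $(V_2,\dots,V_{2k-1})$ — whose consecutive $k$‑tuples are $(\delta_0,\eps')$‑super‑typical by the conditioned event — expands $\tilde\cK$ to all but a $\delta/10$‑fraction of the $K_{k-1}$‑copies in $(V_{k+1},\dots,V_{2k-1})$, via canonical $(k-1)$‑paths inside $(V_2,\dots,V_{2k-1})$. It then remains to \emph{re‑thicken}: for almost every target $K_k$ in $(V_{k+1},\dots,V_{2k})$ one builds the connecting canonical $P_{2k}^k$ by concatenating (a) an extension of the corresponding $(k-1)$‑path's initial $(k-1)$‑clique into $V_1$ through $A_T$, (b) the $(k-1)$‑path itself, and (c) a final extension into $V_{2k}$, repeatedly invoking the regular‑pair amplification ``a $\ge\delta_0$‑dense subset of one side of a $(\delta_0,p)$‑regular pair has a neighbour in all but a $\delta_0$‑fraction of the other side'' — which holds because all $(k-1)$‑cliques encountered are $\delta_0$‑typical with respect to the two classes exactly $k$ apart from them — to supply precisely the edges between classes at distance exactly $k$ that the $(k-1)$‑path does not provide, and to select the final $V_{2k}$‑vertex; choosing the super‑typicality parameter $\delta_0$ of order $\delta/k$ keeps the loss incurred at each of the $O(k)$ junctions additive, so the reached family has size at least $(1-10\delta)n^{v(K_k)}(\alpha p)^{e(K_k)}$.

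\emph{The main obstacle} is exactly this re‑thickening. The recursion only delivers a $(k-1)$‑path, which is missing \emph{all} edges between classes exactly $k$ apart, and a naive reinstatement of such an edge (or of the outer classes $V_1,V_{2k}$) costs a \emph{multiplicative} constant factor each time, which over the $\Theta(k)$ junctions would destroy the ``almost all'' conclusion entirely; the point is to always route through the regular‑pair amplification $\delta_0\text{-dense}\rightsquigarrow 1-\delta_0$ (rather than through ``heavily‑hit'' selection) so that the losses are merely additive $O(\delta_0)=O(\delta/k)$, and to make sure the various restricted tuples one passes to stay super‑typical — which is precisely what the high‑probability event of the first paragraph and the choice $\eps\le\eps_0\ll\delta_0$ guarantee.
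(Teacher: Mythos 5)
Your base case matches the paper, but the inductive step has a genuine gap, and it is exactly at the point you flag as ``the main obstacle''. You apply the level-$(k-1)$ statement as a black box to the ambient tuple $(V_2,\ldots,V_{2k-1})$ and then try to ``re-thicken'' the resulting canonical $(k-1)$-paths into canonical $k$-paths. But a $(k-1)$-path delivered by the induction hypothesis has one \emph{already fixed} vertex $u_i$ in each class $V_2,\ldots,V_{2k-1}$, and a canonical $P^k_{2k}$ additionally requires the edges $\{u_i,u_{i+k}\}$ for $2\le i\le k-1$, i.e.\ edges between pairs of vertices that the black-box application has already chosen. No regular-pair amplification can supply an edge between two specified vertices: regularity (or typicality of the cliques along the path, which in any case you have no control over, since the induction hypothesis says nothing about which intermediate vertices its paths use) only lets you pick a \emph{new} vertex in a large set with many neighbours in a dense set. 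The same problem already occurs at the first junction: the vertex you want to add in $V_1$ through $A_T$ must also be adjacent to the fixed path vertex $u_{k+1}\in V_{k+1}$, and $A_T$ is a set of size $\Theta(np^{k-1})$ about which the fixed vertex $u_{k+1}$ satisfies no usable degree condition. So the issue is not additive versus multiplicative losses in the counting; the required edges simply need not exist, and your argument provides no mechanism to ensure them.

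The paper's proof avoids this entirely by where it applies the induction hypothesis: not to the ambient classes, but \emph{inside the common neighbourhood of a single vertex} $v\in V_k$, i.e.\ to subsets $\tilde N_i\subseteq N(v,V_i)$ of size $\approx(1-\eps)^2\alpha np$ for $i\in[2k-1]\setminus\{k\}$, which form a graph in $\cG_{\delta'}(P^{k-1}_{2(k-1)},\tilde n,\alpha,\eps',p)$ by Corollary~\ref{cor:regular-to-super-typical}. A canonical $(k-1)$-path living entirely in $N(v)$, with $v$ inserted at its natural position, \emph{is} a canonical $k$-path: every pair at distance exactly $k$ that skips over $v$'s position is at distance $k-1$ in the $(k-1)$-path and hence already an edge, and $v$ is adjacent to everything. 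This is the missing idea in your proposal (you invoke neighbourhoods of cliques only in the conditioning, never in the construction), and it is also precisely where the hypothesis $n\ge Cp^{-k}$ is consumed, since the recursion must run on sets of size $\Theta(np)$. The paper then finishes with one application of Lemma~\ref{lem:one-step-expansion} to $(\tilde V_k,V_{k+1},\ldots,V_{2k})$ to reach $(V_{k+1},\ldots,V_{2k})$, where $\tilde V_k$ is the set of typical vertices of $V_k$ lying on many copies from $\cK$ -- a selection step similar in spirit to your ``heavily-hit tails'', but performed on vertices of $V_k$ rather than on $(k-1)$-cliques, exactly so that the neighbourhood trick above is available.
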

\begin{proof}
  To shorten notation, we write $p_0 := \alpha p$ and $x_k := n^{v(K_k)}p_0^{e(K_k)}$ in the remainder of the proof. Given $\alpha$ and $\delta$, let $\delta' = \delta/32$, $\eps' = \min\{(\frac{\alpha}{\alpha + 1})^2 , (\frac{\delta'}{4k})^{4}, (\frac{16k}{4^{k}})^{2}\}$, $C_1 = \max\{ C_{\ref{cor:regular-to-super-typical}}(\alpha, \eps', \delta', \eps', k + 1), C_{\ref{cor:regular-to-super-typical}}(\alpha, \eps', \delta', (1 - \eps')^2/(1 + \eps')^2, k) \}$, $c_1 = \min \{ c_{\ref{cor:regular-to-super-typical}}(\alpha, \eps', \delta', \eps', k + 1), c_{\ref{cor:regular-to-super-typical}}(\alpha, \eps', \delta', (1 - \eps')^2/(1 + \eps')^2, k) \}$. If $k \geq 3$ we furthermore define $\eps_{k - 1} = \eps_{0_{\ref{lem:main-expansion}}}(\alpha, \delta', k - 1)$, $C_{k - 1} = C_{\ref{lem:main-expansion}}(\alpha, \delta', k - 1)$, and $c_{k - 1} = c_{\ref{lem:main-expansion}}(\alpha, \delta', k - 1)$. Lastly, take $\eps_0 = \min\{ \eps_{k - 1}, \eps'^2, \eps_{0_{\ref{cor:regular-to-super-typical}}}^{2} (\alpha, \eps', \delta', \eps', k + 1) \}$ and $C = \max\{ C_{k - 1}, C_1 \} / ((1 - \eps')^2\alpha)$, where $\eps_1 = \eps'$.

  We first consider the case $k = 2$ and assume that $\GNp$ is such that the conclusion of Corollary~\ref{cor:regular-to-super-typical} holds for $t = 3$, $\alpha$, $\eps'$, $\delta$, and $\eps'$ as $\eta$. This happens with probability at least $1 - e^{-c_1 \cdot n^2 p}$. Denote by $E'$ an arbitrary subset of $E(V_1, V_2)$ of size $\delta x_2$. Let $\tilde V_2$ be the subset of $\eps$-typical vertices $v \in V_2$ for which $|N_{E'}(v, V_1)| \geq (\delta/2) np_0$. Recall, there are at most $\eps|V_2|$ vertices in $V_2$ which are not $\eps$-typical. By the definition of regularity such vertices are incident to at most $\eps(1 + \eps)^2 n^2 p_0$ edges, as $d(V_1, V_2) \leq (1 + \eps) p_0$. Trivially, there at most $n$ vertices in $V_2$ that are incident to fewer than $(\delta/2)np_0$ edges in $E'$ and as all $\eps$-typical vertices have at most $(1 + \eps)^2 np_0$ neighbours in $V_1$, it follows that
  \[
    |\tilde{V}_2| \geq \frac{|E'| - \eps(1+\eps)^2 n^2p_0-n \cdot (\delta/2) n p_0}{(1 + \eps)^2 np_0} \geq \frac{(\delta/2 - 2\eps)n}{(1 + \eps)^2} \geq \frac{\delta}{4}n,
  \]
  by our choice of $\eps$.

  For an arbitrary vertex $v \in \tilde{V}_2$ we set $N_i := N(v, V_i)$, for $i \in \{1, 3\}$, and $N_1' := N_{E'}(v, V_1)$. Observe that $|N_1'| \geq (\delta /4) |N_1| \ge \eps|N_1|$ and hence, as $v$ is $\eps$-typical and $(N_1, N_3)$ forms an $(\eps, p)$-regular pair of density at least $(1 - \eps)^2p_0$, there is an edge between $N_1'$ and any subset of $N_3$ of size $\eps|N_3|$. Consequently, $E'$ expands to at least $|\tilde{V}_2| (1 - \eps)^3 np_0$ edges in $E(V_2, V_3)$. As $\tilde V_2$ is of size at least $\delta n/4 \geq \eps' n > \sqrt{\eps} n$ and $(V_2, V_3)$, $(V_2, V_4)$ are $(\eps, p)$-regular pairs, we can apply Proposition~\ref{prop:large-subsets-inheritance} to $(\tilde V_2, V_3)$ and $(\tilde V_2, V_4)$ to obtain that the tuple $(\tilde V_2, V_3, V_4)$ is $(\sqrt\eps, p)$-regular of density $(1 \pm (\eps + \eps/\alpha))p_0 \subseteq (1 \pm \eps')p_0$. Moreover, by Corollary~\ref{cor:regular-to-super-typical} applied to $(\tilde V_2, V_3, V_4)$ for $t = 3$, $\alpha$, $\eps'$, $\delta$, $\eps'$ as $\eta$, and $n$, we deduce that it is $(\delta, \eps')$-super-typical. Lastly, since $E'$ expands to at least $|\tilde{V}_2| (1 - \eps)^3 np_0$ edges in $E(V_2, V_3)$ and
  \[
    (1 - \eps)^3 |\tilde V_2| np_0 \geq (1 - \eps')^{5} |\tilde V_2| n (1 + \eps')^2 p_0 \geq (1 - \delta) |\tilde V_2| n (1 + \eps')^2 \alpha p
  \]
  we can apply Lemma~\ref{lem:one-step-expansion} with $\kappa = 1 - \delta$ to deduce that the edges in $E(\tilde V_2, V_3)$ to which $E'$ expands, in turn expand to at least $(1 - 10\delta)x_2$ edges in $(V_3, V_4)$, as claimed.

  Next, consider some $k \geq 3$ and assume the lemma holds for $k - 1$. We first give a brief overview of the proof. Let $\cK_k \subseteq K_k(V_1, \ldots, V_k)$ be an arbitrary subset of size $\delta x_k$. Similarly, as in the case $k=2$ we first define a set $\tilde V_k \subset V_k$ of vertices that are $\eps$-typical in all tuples $(V_i, \ldots, V_{i + k})$ for $i \in [k]$, and that belong to many copies of $K_k$ in $\cK_k$. For every vertex in $\tilde V_k$ we apply the induction hypothesis to its neighbourhoods in order to show expansion for every copy of $K_k$ lying on such a vertex. This allows us to show that we expand to almost all possible copies of $K_k$ in $(\tilde V_k, V_{k + 1}, \ldots, V_{2k - 1})$. Finally, similarly as above, a straightforward application of Lemma~\ref{lem:one-step-expansion} to the tuple $(\tilde V_k, V_{k + 1}, \ldots, V_{2k})$ concludes the proof. Let us now dive into the details.

  From now on we assume the lemma holds when applied for $k - 1$, $\alpha$, $\delta'$, and $(1 - \eps)^2 np_0$ as $n$. Since this happens with probability at least
  \[
    1 - e^{ -c_{k - 1} \cdot (1 - \eps)^2 n^2 p_0^2 p^{2(k- 3) + 1} } = 1 - e^{ -c_{k - 1} \cdot (1 - \eps)^2 \alpha^2 n^2 p^{2(k- 2) + 1} }
  \]
  it is sufficient to show that the induction step holds with probability at least $1 - e^{-\Omega(n^2 p^{2(k - 2) + 1})}$ with the hidden constant depending only on $\alpha, \delta$, and $k$, and then set $c$ to be sufficiently small with respect to that constant. In the remainder of the proof we further assume that $\GNp$ is such that the conclusion of Corollary~\ref{cor:regular-to-super-typical} for $\alpha$, $\eps'$, $\delta'$, $(1 - \eps')^2/(1 + \eps')^2$ as $\eta$, and $(1 + \eps)^2\alpha np$ as $n$, as well as for $k + 1$ as $t$, $\alpha$, $\eps'$, $\delta$, and $\eps'$ as $\eta$, holds. This happens with probability at least
  \[
    1 - e^{-c_1 \cdot \alpha^2 n^2 p^{2(k - 2) + 1}} - e^{-c_1 \cdot \eps'^2 n^2 p^{2(k - 2) + 1}} = 1 - e^{-\Omega(n^2 p^{2(k - 2) + 1})}.
  \]

  Let $\tilde V_k$ be the set of vertices $v \in V_k$ which are $\eps$-typical in $(V_i, \ldots, V_{i + k})$ for all $i \in [k]$, and which additionally belong to at least
  \[
    (\delta/2) n^{k-1} p_0^{\binom{k}{2}} = (\delta/2) x_{k - 1} p_0^{k - 1}
  \]
  copies of $K_k$ in $\cK_k$. In order to give a lower bound on the size of such a set, we first need an upper bound on the number of vertices $v \in V_k$ that belong to `many' canonical copies of $K_k$ in $(V_1, \ldots, V_k)$.

  Let $X \subseteq V_k$ be of size $\eps' |V_k|$ and assume that each $v \in X$ belongs to at least $(1 + \delta)^2 x_{k - 1} p_0^{k - 1}$ copies of $K_k$.
  By Proposition~\ref{prop:large-subsets-inheritance} $(V_1, \ldots, V_{k - 1}, X, V_{k + 1})$ forms a $(\sqrt\eps, p)$-regular $(k + 1)$-tuple of density $(1 \pm (\eps + \eps/\alpha))p_0 \subseteq (1 \pm \eps')p_0$. Moreover, by our choice of $\eps_0$ and $C$, it follows from Corollary~\ref{cor:regular-to-super-typical} that $(V_1, \ldots, V_{k - 1}, X, V_{k + 1})$ is $(\delta, \eps')$-super-typical. We thus know that
  \[
    |K_k(V_1, \ldots, V_{k - 1}, X)| \leq (1 + \delta) (1 + \eps')^{k^{2}} |X| n^{v(K_k)-1}p_0^{e(K_k)}.
  \]
  On the other hand, the assumption on the set $X$ implies that $|K_k(V_1, \ldots, V_{k - 1}, X)| \geq(1 + \delta)^2 |X| n^{v(K_{k - 1})} p_0^{e(K_k)}$, which is a contradiction as $(1 + \eps')^{k^2} < 1 + \delta$. Thus, such a set $X$ does not exist. Moreover, the vertices in $V_k$ which belong to at least $(1 + \delta)^2 x_{k - 1} p_0^{k - 1}$ copies of $K_k$ hence in total belong to at most
  \[
    (1 + \delta) (1 + \eps')^{k^{2}} \eps' n^{v(K_k)} p_0^{e(K_k)} \leq (1 + \delta)^2 \eps' x_k
  \]
  copies of $K_k$.

  Trivially, there are at most $n$ vertices which belong to fewer than $(\delta/2) x_{k - 1} p_0^{k - 1}$ copies of $K_k$ in $\cK_k$. As there are at most $\eps n$ vertices which are not $\eps$-typical in $(V_i, \ldots, V_{i + k})$ for each $i \in [k]$, we derive a lower bound on the size of $\tilde V_k$ as
  \[
    |\tilde V_k| \geq \frac{\delta x_k - (1 + \delta)^2 \eps' x_k- (\delta/2) x_k }{(1 + \delta)^2 x_{k - 1} p_0^{k - 1}} - k \eps n \geq \frac{\delta}{4}n - k \eps' n \geq \frac{\delta}{8} n.
  \]
  Let $v \in \tilde V_k$ be an arbitrary vertex and let $N_i := N(v, V_i)$ denote its neighbourhoods for all $i \in [2k - 1] \setminus \{k\}$. From the properties of typical vertices, our choice of $C_1$, and the bound on $n$ from the assumptions of the lemma, it follows that
  \[
    \min\{ C_1 p^{-k + 1}, C_1 p^{-k + 2}\log{N} \} \leq (1 - \eps)^{2}|V_i| \alpha p \leq | N_i| \leq (1 + \eps)^2|V_i|\alpha p.
  \]
  By Definition~\ref{def:typical-tuples}~$\ref{typical-tuple-reg-inheritance}$ the neighbourhoods of the vertex $v$ span $(\eps, p)$-regular tuples of densities $(1 \pm \eps)^2 p_0$. Let $\tilde n := (1 - \eps)^{2}n p_0$ and choose sets $\tilde N_i \subseteq N_i$ of size $|\tilde N_i| = \tilde n$ arbitrarily. By Proposition~\ref{prop:large-subsets-inheritance} the graph $G' := G[\tilde N_1, \ldots, \tilde N_{t - 1}, \tilde N_{t + 1}, \ldots, \tilde N_{2t - 1}]$ belongs to the class $\cG(P^{k - 1}_{2(k - 1)}, \tilde n, \alpha, \sqrt{\eps}, p)$. Additionally, we can apply Corollary~\ref{cor:regular-to-super-typical} for $\alpha$, $\eps'$, $\delta'$, $(1 - \eps')^{2}/(1 + \eps')^{2}$ as $\eta$, and $(1 + \eps)^{2}\alpha np$ as $n$, to obtain that $(\tilde N_i, \ldots, \tilde N_{i + k - 1})$ is $(\delta', \eps')$-super-typical for all $i \in [k]$. Therefore, $G' \in \cG_{\delta'}(P^{k - 1}_{2(k - 1)}, \tilde n, \alpha, \eps', p)$.

  Note that by taking subsets $\tilde N_i$ we may have destroyed some of the $(\delta/2) x_{k - 1} p_0^{k - 1}$ copies of $K_{k - 1}$ lying in the neighbourhood of $v$. However, note also that $|N_i\setminus \tilde N_i| \le 5\eps np_0$, with room to spare. Similarly as above, one can show that any set of size $\eps' np_0$ within $N_i$ belong to at most $(1 + \delta)^2 \eps' x_{k - 1} p_0^{k - 1}$ canonical copies of $K_{k - 1}$ in $(N_1, \ldots, N_{k - 1})$. Thus, at least
  \[
    (\delta/2) x_{k - 1} p_0^{k - 1} - k \cdot (1 + \delta)^2 \eps' x_{k - 1} p_0^{k - 1} \geq (\delta/4) x_{k - 1} p_0^{k - 1} \ge (\delta/4) \tilde n^{v(K_{k - 1})} p_0^{e(K_{k - 1})}
  \]
  canonical copies of $K_{k - 1}$ remain in $(\tilde N_1, \ldots, \tilde N_{k - 1})$.

  We now apply the induction hypothesis for $\delta'$ to obtain that starting from $(\delta/4) \tilde n^{v(K_{k - 1})} p_0^{e(K_{k - 1})}$ canonical copies of $K_{k - 1}$ in $(\tilde N_1, \ldots, \tilde N_{k - 1})$ we expand to at least
  \[
    (1 - 10\delta') \tilde n^{v(K_{k - 1})} p_0^{e(K_{k - 1})} \geq (1 - 10\delta') ((1 - \eps')^2 np_0)^{v(K_{k - 1})} p_0^{e(K_{k - 1})} \geq (1 - 12\delta') x_{k - 1} p_0^{k - 1}
  \]
  canonical copies of $K_{k - 1}$ in $(N_{k + 1}, \ldots, N_{2k - 1})$. This holds for every $v \in \tilde V_k$ and hence we expand to at least $(1 - 12\delta') |\tilde V_k| x_{k - 1} p_0^{k - 1}$ canonical copies of $K_k$ in $(\tilde V_k, V_{k + 1}, \ldots, V_{2k - 1})$.

  Since $|\tilde V_k| \geq (\delta/8) n > \eps' n$, we have that $(\tilde V_k, V_{k + 1}, \ldots, V_{2k - 1}, V_{2k})$ is an $(\sqrt\eps, p)$-regular $(k + 1)$-tuple of density $(1 \pm (\eps + \eps/\alpha))p_0 \subseteq (1 \pm \eps')p_0$ by Proposition~\ref{prop:large-subsets-inheritance}. Moreover, it is $(\delta, \eps')$-super-typical by Corollary~\ref{cor:regular-to-super-typical} applied with $k + 1$ as $t$, $\alpha$, $\eps'$, $\delta$, $\eps'$ as $\eta$, and $n$. Therefore, Lemma~\ref{lem:one-step-expansion} for $\kappa := 1 - \delta$, gives that starting from
  \begin{align*}
    (1 - 12\delta') |\tilde V_k| x_{k - 1} p_0^{k - 1} &\geq (1 - 12\delta')(1 - \eps')^{k^2} |\tilde V_k| n^{v(K_{k - 1})} ((1 + \eps')^2 \alpha p_0)^{e(K_k)} \\
    &\geq (1 - \delta) |\tilde V_k| n^{v(K_{k - 1})} ((1 + \eps')^2 \alpha p_0)^{e(K_k)}
  \end{align*}
  canonical copies of $K_k$ in $(\tilde V_k, V_{k + 1}, \ldots, V_{2k - 1})$, we expand to at least $(1 - 10\delta) x_k$ canonical copies of $K_k$ in $(V_{k + 1}, \ldots, V_{2k})$, as claimed.
\end{proof}

It turns out that having expansion from a small linear number of copies to almost all copies in some number of steps is enough in order to find a {\em single copy} of $K_k$ which expands to many copies in roughly $\log{N}$ more steps. The idea is a fairly simple one: if starting from a $\delta$-fraction of copies we expand to almost all copies in $k$ steps, then there has to exist a $(\delta/2)$-fraction of the copies which reaches roughly a half of all the copies in $k$ steps. By having $\delta$ small enough, Lemma~\ref{lem:main-expansion} shows that starting from the reached copies we expand again to almost all the copies in another $k$ steps. In conclusion, we found a $(\delta/2)$-fraction of the initial copies that in $2k$ steps expand to almost all possible copies. Repeating this procedure for roughly $\log N$ times yields the result.
\begin{proof}[Proof of Lemma~\ref{lem:clique-expansion-lemma}]
  Write $p_0 = \alpha p$. For given $\alpha, \delta, k$, let $\eps_0 = \eps_{0_{\ref{lem:main-expansion}}}(\alpha, \delta, k)$, $C = C_{\ref{lem:main-expansion}}(\alpha, \delta, k)$, and $c = c_{\ref{lem:main-expansion}}(\alpha, \delta, k)$. We assume that $\GNp$ is such that the conclusion of Lemma~\ref{lem:main-expansion} holds for $\alpha$, $\delta$, and $k$. This happens with probability at least $1 - e^{-c \cdot n^2 p^{2(k - 2) + 1}}$.

  Let $\cK$ be an arbitrary subset of $K_k(V_1, \ldots, V_k)$ of size $\delta n^{v(K_k)} p_0^{e(K_k)}$. By Lemma~\ref{lem:main-expansion}, $\cK$ expands to at least $(1 - 10k\delta) n^{v(K_k)} p_0^{e(K_k)}$ canonical copies of $K_k$ in $(V_{k + 1}, \ldots, V_{2k})$. The following claim captures the main property we need in order to show the assertion of the lemma.
  \begin{claim}\label{cl:halving}
    Let $1 \leq i \leq \floor{\ell/k} - 2$ be an integer and $A \subseteq \cK$ such that $A$ expands to at least $2 \delta n^{v(K_k)} p_0^{e(K_k)}$ canonical copies of $K_k$ in $(V_{ik + 1}, \ldots, V_{(i + 1)k})$. Then there exists a subset $A' \subseteq A$ such that $|A'| \leq \ceil{|A|/2}$ and $A'$ expands to at least $(1 - 10k\delta)n^{v(K_k)}p_0^{e(K_k)}$ canonical copies of $K_k$ in $(V_{(i + 1)k + 1}, \ldots, V_{(i + 2)k })$.
  \end{claim}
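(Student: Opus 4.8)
The plan is a pigeonhole (``halving'') step followed by one application of Lemma~\ref{lem:main-expansion}. Write $x_k := n^{v(K_k)} p_0^{e(K_k)}$. First I would fix a set $B \subseteq K_k(V_{ik+1}, \ldots, V_{(i+1)k})$ of at least $2\delta x_k$ canonical copies to which $A$ expands, and for every $H'' \in B$ record a single witness $f(H'') \in A$ together with a canonical $k$-path in $G[V_1, \ldots, V_{(i+1)k}]$ joining $f(H'')$ to $H''$. Splitting $A$ into two parts $A_1, A_2$ with $\big||A_1| - |A_2|\big| \le 1$ (so each has size at most $\ceil{|A|/2}$), the sets $f^{-1}(A_1)$ and $f^{-1}(A_2)$ partition $B$, hence one of them, say $C := f^{-1}(A_1)$, has size at least $|B|/2 \ge \delta x_k$. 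I would then set $A' := A_1$; by construction $A'$ expands to $C$, $|C| \ge \delta x_k$, and $|A'| \le \ceil{|A|/2}$, so it only remains to show $A'$ expands to at least $(1 - 10k\delta)x_k$ copies in $(V_{(i+1)k+1}, \ldots, V_{(i+2)k})$.

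The second step pushes $C$ forward one block of $k$ sets via Lemma~\ref{lem:main-expansion}. Since $i \le \floor{\ell/k} - 2$ we have $(i+2)k \le \ell$, so every consecutive $(k+1)$-subtuple of $(V_{ik+1}, \ldots, V_{(i+2)k})$ is $(\delta, \eps)$-super-typical because $G \in \cG_\delta(P^{k}_{\ell}, n, \alpha, \eps, p)$; equivalently $G[V_{ik+1}, \ldots, V_{(i+2)k}] \in \cG_\delta(P^{k}_{2k}, n, \alpha, \eps, p)$. With $\eps \le \eps_0 = \eps_{0_{\ref{lem:main-expansion}}}(\alpha, \delta, k)$ and $n$ as in the hypothesis, Lemma~\ref{lem:main-expansion}, applied with $V_{ik+1}, \ldots, V_{(i+1)k}$ in the role of $V_1, \ldots, V_k$ (and using that expansion only grows with the source set, so $|C| \ge \delta x_k$ suffices), yields that $C$ expands to at least $(1 - 10\delta)x_k \ge (1 - 10k\delta)x_k$ canonical copies of $K_k$ in $(V_{(i+1)k+1}, \ldots, V_{(i+2)k})$.

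To finish I would check transitivity of expansion across the two blocks: if $H'''$ in $(V_{(i+1)k+1}, \ldots, V_{(i+2)k})$ is reached from some $H'' \in C$ by a $k$-path $P_2 \subseteq G[V_{ik+1}, \ldots, V_{(i+2)k}]$, and $H''$ is reached from $H' := f(H'') \in A'$ by a $k$-path $P_1 \subseteq G[V_1, \ldots, V_{(i+1)k}]$, then $P_1 \cup P_2$ is again a canonical $k$-path in $G[V_1, \ldots, V_{(i+2)k}]$: the two paths agree exactly on the vertices in $V_{ik+1}, \ldots, V_{(i+1)k}$ (namely the vertices of $H''$), and any vertex of $P_1$ not in the overlap and any vertex of $P_2$ not in the overlap lie at distance at least $k+1$ along the concatenation, so no edge outside $E(P_1) \cup E(P_2)$ is ever required. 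Hence $A'$ expands to the same $(1 - 10k\delta)x_k$ copies, proving the claim. The only point that needs genuine care is precisely this concatenation, i.e.\ that the shared block has width exactly $k$ so that $P_1 \cup P_2$ stays a $k$-path; the remaining issues (the pigeonhole split, and rounding $\delta x_k$ to an integer) are routine, and no new randomness is used since we are still inside the event of Lemma~\ref{lem:main-expansion} already conditioned on at the start of the proof of Lemma~\ref{lem:clique-expansion-lemma}.
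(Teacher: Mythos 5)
Your proposal is correct and follows essentially the same route as the paper: split $A$ into two halves, use pigeonhole to find a half that still reaches at least $\delta n^{v(K_k)}p_0^{e(K_k)}$ copies in $(V_{ik+1},\ldots,V_{(i+1)k})$, and then push forward one block with Lemma~\ref{lem:main-expansion}, using that $G[V_{ik+1},\ldots,V_{(i+2)k}]\in\cG_\delta(P^k_{2k},n,\alpha,\eps,p)$ and that $(1-10\delta)\ge(1-10k\delta)$. The only difference is cosmetic: you make explicit the witness function and the concatenation of $k$-paths along the shared width-$k$ block, which the paper leaves implicit.
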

  Before showing the claim let us first complete the proof of the lemma. Take $m = \floor{\ell/k} - 2$ and apply Claim~\ref{cl:halving} repeatedly $m$ times. This shows that there exist a set $\cK' \subseteq \cK$ such that $|\cK'| \leq \ceil{|\cK|/2^{m}}$ and $\cK'$ expands to at least $(1 - 10k\delta) n^{v(K_k)} p_0^{e(K_k)}$ canonical copies of $K_k$ in $(V_{(m + 1)k + 1}, \ldots, V_{(m + 2)k})$. Since $|\cK| \leq N^{k}$ and $m \geq 2k \log N$, it follows that $|\cK|/2^{m} \leq 1$. Hence, there exists a $K \in \cK$ which expands to at least $(1 - 10k\delta)n^{v(K_k)}p_0^{e(K_k)}$ canonical copies of $K_k$ in $(V_{(m + 1)k + 1}, \ldots, V_{(m + 2)k})$. Finally, by repeated application of Lemma~\ref{lem:one-step-expansion} for at most $k$ times with $\kappa := 1 - 10k\delta$ we further get that $K$ expands to a set $\cK' \subseteq K_k(V_{\ell - k + 1}, \ldots, V_{\ell})$ of size $|\cK'| \geq (1 - 20 k \delta) n^{v(K_k)} p_0^{e(K_k)}$.

  \begin{proof}[Proof of Claim~\ref{cl:halving}]
    Take an arbitrary partition $A = S \cup T$ such that $|S|, |T| \leq \ceil{|A|/2}$. As $A$ expands to at least $(1 - 10k\delta) n^{v(K_k)} p_0^{e(K_k)}$ canonical copies of $K_k$ in $(V_{ik + 1}, \ldots, V_{(i + 1)k})$, it follows that either $S$ or $T$ has to expand to at least $(1/2 - 5k\delta)n^{v(K_k)}p_0^{e(K_k)}$ canonical copies of $K_k$ in $(V_{ik + 1}, \ldots, V_{(i + 1)k})$. Since $(1/2 - 5k\delta)\ge \delta$ we can apply Lemma~\ref{lem:main-expansion} to this set of canonical copies of $K_k$ to deduce that it expands to at least $(1 - 10k\delta) n^{v(K_k)} p_0^{e(K_k)}$ canonical copies of $K_k$ in $(V_{(i + 1)k + 1}, \ldots, V_{(i + 2)k })$.
  \end{proof}
  This completes the proof of the lemma.
\end{proof}

\subsection{Proof of Theorem~\ref{thm:main-theorem}}
With all the tools at hand we can now prove our main theorem, which we restate here for the convenience of the reader.
\MainTheorem*
\begin{proof}
  Let $G$ be a spanning subgraph of $\GNp$ with $\delta(G) \geq (\frac{k}{k + 1} + \alpha)Np$. In the rest of the proof we define some constants and use the following relation between them
  \[
    0 < \eps'' \ll \eps_0'' \ll \eps' \ll \eps_0' \ll \tilde \eps \ll \tilde \eps_0 \ll \xi \ll \eps < 1,
  \]
  where by $a \ll b$ we mean that $a$ is chosen to be sufficiently smaller than $b$. Now we make this precise. Take $\delta = (20k)^{-6}$, $\eta = 1$, let $d = d_{\ref{cor:nice-regularity}}(\alpha)$, $\xi = \min\{\eps/4, d/(d + 1)\}$, $\tilde\eps_0 = \eps_{0_{\ref{lem:clique-expansion-lemma}}}(d, \delta, k)$, and $\tilde\eps = \tilde\eps_0/2$. Let $\eps_0' = \eps_{0_{\ref{cor:regular-to-super-typical}}}(d, \tilde\eps, \delta, \eta, k + 1)$, $\eps' = \eps_0' \xi/2$, $\eps_0'' = \eps_{0_{\ref{lem:regularity-partitioning}}}(\eps', d)$, and $\eps'' = \eps_0''/2$. Define $c_1 = c_{\ref{lem:regularity-partitioning}}(\eps', d)$, $c_2 = c_{\ref{cor:regular-to-super-typical}}(d, \tilde\eps, \delta, \eta, k + 1)$, and $c_3 = c_{\ref{lem:clique-expansion-lemma}}(d, \delta, k)$. Lastly, set $n_0 = n_{0_{\ref{thm:KSS}}}(k)$, $M = M_{\ref{cor:nice-regularity}}(\eps'', n_0)$, and $C = C_{\ref{lem:clique-expansion-lemma}}(d, \delta, k)$. Throughout the proof we assume that $\GNp$ is such that:
  \begin{itemize}
    \item the conclusion of Corollary~\ref{cor:nice-regularity} holds for $k/(k + 1)$ as $\mu$, $\alpha$ as $\nu$, $\eps''$ as $\eps$, and $n_0$ as $m$ (which happens with probability at least $1 - e^{-\Omega(N^2p)}$),
    \item the conclusion of Lemma~\ref{lem:regularity-partitioning} holds for $\eps'$, $d$, and some $q \gg p^{-1}\log N$ (which happens with probability at least $1 - e^{-c_1 \cdot q^2p}$),
    \item the conclusion of Corollary~\ref{cor:regular-to-super-typical} holds for $k + 1$ as $t$, $d$ as $\alpha$, $\tilde\eps$ as $\eps'$, $\delta$, $\eta$, and $\xi q$ as $n$ (which happens with probability at least $1 - e^{-c_2 \cdot (\xi q)^2 p^{2(k - 2) + 1}}$), and
    \item the conclusion of Lemma~\ref{lem:clique-expansion-lemma} holds for $k$, $d$ as $\alpha$, $\delta$, and $\xi q$ as $n$ (which happens with probability at least $1 - e^{-c_3 \cdot (\xi q)^2 p^{2(k - 2)} + 1}$).
  \end{itemize}
  For $q \geq (1 - \eps'')N/(M \cdot 3k^2 \log N)$ we have
  \[
    (\xi q)^2 p^{2(k - 2) + 1} \geq \xi^2 \left( \frac{(1 - \eps'') N}{M \cdot 3k^2 \log N} \right)^2 \cdot C^{2(k - 2) + 1} \left( \frac{\log N}{N} \right)^{2 - 3/k}.
  \]
  Since all constants depend only on $\eps$, $\alpha$, and $k$, by choosing $c$ to be small enough w.r.t.\ those the success probability is at least $1 - e^{-c \cdot (N/\log N)^{3/k}}$, as required. Conditioning on the above, the rest of the proof is fully deterministic.

  We apply Corollary~\ref{cor:nice-regularity} to $G$ with $\eps''$ as $\eps$, $k/(k + 1)$ as $\mu$, $\alpha$ as $\nu$, and $n_0$ as $m$ to obtain a partition $V(G) = V_0 \cup V_1 \cup \ldots \cup V_{t_0}$, with $n_0 \leq t_0 \leq M$, such that $|V_0| \leq \eps'' N$, $|V_1| = \ldots = |V_{t_0}| = n'' \in [(1 - \eps'')N/t_0, N/t_0]$, and for every $i \in [t_0]$ there are at least $kt_0/(k + 1)$ indices $j \in [t_0] \setminus \{i\}$ such that $G[V_i, V_j]$ is an $(\eps'', p)$-regular pair of density $dp$. As $t_0 \geq n_0$, Theorem~\ref{thm:KSS} yields that $G$ contains a subgraph $G'' \in \cG(C^{k}_{t_0}, n'', d, \eps'', p)$ on the partition classes $V_1, \ldots, V_{t_0}$, where w.l.o.g.\ $V_i$ represents the $i$-th vertex of the cycle. Furthermore, since $\eps'' < \eps_{0_{\ref{lem:regularity-partitioning}}}(\eps', d)$, we may apply Lemma~\ref{lem:regularity-partitioning} for $\eps'$ and $n' = \floor{n''/r}$ as $q$, where $r = 3k^2 \log N$, to $(V_1, \ldots, V_{t_0})$ in order to obtain a new partition
  \[
    V(G) = V_0 \cup \bigcup_{i = 1}^{t_0} \bigcup_{j = 0}^{r} V_i^j,
  \]
  such that $|V_i^0| \leq n'$ and $|V_i^1| = \ldots = |V_i^r| = n'$, for all $i \in [t_0]$. For convenience, we rename the partition classes as follows: $\tilde V_0 = V_0 \cup V_1^0 \cup \ldots V_{t_0}^0$ and $\tilde V_{t_0 \cdot (j - 1) + i} = V_i^j$, for $i \in [t_0]$ and $j \in [r + 1]$. Furthermore, we set $t = t_0 r$ and identify $\tilde V_{t + i}$ with $\tilde V_i$, for every $i \in [t]$.

  Observe that, as $G'' \in \cG(C^{k}_{t_0}, n'', d, \eps'', p)$, we have that for all $i \in [t_0]$ and $j \in [k]$, $(V_i, V_{i + j})$ (where we again identify $V_{t_0 + i}$ with $V_{i}$) is an $(\eps'', p)$-regular pair of density $dp$. In turn this implies that all $(\tilde V_{t_0 \cdot (r_1 - 1) + i}, \tilde V_{t_0 \cdot (r_2 - 1) + i + j})$, with $r_1, r_2 \in [r + 1]$, form $(\eps', p)$-regular pairs of density $(1 \pm \eps')dp$. Thus we have that $G$ contains a subgraph $G' \in \cG(C^{k}_{t}, n', d, \eps', p)$ on the partition classes $\tilde{V}_1, \ldots, \tilde{V}_t$ (in this order).

  We set $\tilde n = \xi n'$ and $x = \tilde n^{v(K_k)} (dp)^{e(K_k)}$. By Proposition \ref{prop:large-subsets-inheritance} for any collection of subsets $X_i \subseteq \tilde V_i$ of size $|X_i| = \tilde n \geq \eps' n'$, the graph $\tilde G$ induced by the sets $X_1, \ldots, X_t$ belongs to the class $\cG(P^{k}_{t}, \tilde n, d, \tilde \eps, p)$, since $1 \pm (\eps' + \eps'/d) \subseteq 1 \pm \tilde \eps$. This observation, together with the fact that $\eps'/\xi < \eps_{0_{\ref{cor:regular-to-super-typical}}}(d, \tilde\eps, \delta, \eta, k + 1)$ and Corollary~\ref{cor:regular-to-super-typical}, implies that the following holds:
  \begin{enumerate}[label=(P\arabic*), font=\itshape\bfseries, leftmargin=2.8em]
    \item\label{main-proof-P1} For all $1 \leq \ell \leq N$, every fixed $\ell$-partite subgraph $\tilde G$ of $G$ induced by the parts $(X_1, \ldots, X_\ell)$ of pairwise disjoint subsets $X_i \subseteq \tilde V_i$ of size $|X_i| = \tilde n$ belongs to the class $\cG_\delta(P^{k}_{\ell}, \tilde n, d, \tilde \eps, p)$.
  \end{enumerate}

  Property~\ref{main-proof-P1}, Lemma \ref{lem:clique-expansion-lemma}, and our choice of $\tilde \eps < \eps_{0_{\ref{lem:clique-expansion-lemma}}}(d, \delta, k)$ furthermore imply that another important property holds:
  \begin{enumerate}[resume, label=(P\arabic*), font=\itshape\bfseries, leftmargin=2.8em]
    \item\label{main-proof-P2} For all $1 \leq \ell \leq N$, every fixed tuple $\mathbf{X} = (X_1, \ldots, X_\ell)$ such that $X_i \subseteq \tilde V_i$ are pairwise disjoint subsets of size $|X_i| = \tilde n$, and every $\cK \subseteq K_k(X_1, \ldots, X_k)$ of size $|\cK| \geq \delta x$, there exist $K \in \cK$ and $\cK' \subseteq K_k(X_{\ell - k + 1}, \ldots, X_\ell)$ of size $|\cK'| \ge (1 - 20k\delta)x$ such that $K$ expands to $\cK'$ over $\mathbf{X}$.
  \end{enumerate}

  Note that here we do allow $\ell > t$, which in particular means that we may apply properties \ref{main-proof-P1} and \ref{main-proof-P2} to tuples of the form $(X_1, \ldots, X_t, X_1', \ldots, X_k')$ with $X_i, X_i' \subseteq V_i$ and $X_i \cap X_i' = \varnothing$, for $i \in [k]$.
  Moreover, we also allow the sets $X_i$ to be chosen `backwards', that is a tuple of the form $(X_k, \ldots, X_1, X_t', \ldots, X_1')$, by relabelling the partition classes $\tilde V_1, \ldots, \tilde V_t$ of $C_t^k$ in the opposite order if necessary.

  With this at hand, we may proceed with the main argument. We first choose tuples $(S_1, \ldots, S_t)$ and $(T_{1}^{1}, \ldots, T_{t}^{1})$ where all $S_i, T_i^{1} \subseteq \tilde V_i$ are of size $\tilde n$, and $S_i \cap T_i^{1} = \varnothing$, for all $i \in [t]$. Combining the properties of super-typical tuples with \ref{main-proof-P2} from above it is not hard to see that there exists a single canonical copy of $K_k$ in $(T_1^1, \ldots, T_k^1)$ that expands to almost all canonical copies of $K_k$ in $(T_{t - k + 1}^{1}, \ldots, T_{t}^{1})$ and at the same time expands `backwards' to almost all canonical copies of $K_k$ in $(S_{1}, \ldots, S_{k})$ through $(T_{k}^{1}, \ldots, T_{1}^{1}, S_t, \ldots, S_1)$. Indeed, observe that \ref{main-proof-P2} applied for $(T_1^1, \ldots, T_t^1)$ implies that all but at most $\delta x$ copies of $K_k$ in $K_k(T^1_1, \ldots, T^1_k)$ are such that they expand to at least $(1 - 20k\delta)x$ copies of $K_k$ in $K_k(T^1_{t-k+1}, \ldots, T^1_t)$. Similarly, applying \ref{main-proof-P2} this time to $(T_k^1, \ldots, T_1^1, S_t, \ldots, S_1)$, implies that all but at most $\delta x$ copies of $K_k$ in $K_k(T^1_1, \ldots, T^1_k)$ are such that they expand to at least $(1 - 20k\delta)x$ copies of $K_k$ in $K_k(S_1, \ldots, S_k)$. As $K_k(T^1_1, \ldots, T^1_k)$ contains more than $2\delta x$ copies of $K_k$ we know that it contains at least one such copy that expands well to both sides.

  Fix such a copy $K^*$ and denote by $\cK^{1}$ and $\cK^{*}$ the sets of copies in $K_k(T_{t - k + 1}^{1}, \ldots, T_{t}^{1})$ and $K_k(S_1, \ldots, S_k)$ to which $K^*$ expands to, respectively. The set $\cK^{*}$ together with the tuple $(S_1, \ldots, S_t)$ are set aside and used in the end to close the cycle. On the other hand, the set $\cK^{1}$ is used to inductively build a long path covering almost all vertices of $V(G) \setminus (S_1 \cup \ldots \cup S_t)$. More precisely, as long as the sets $\tilde V_i$ contain at least $2 \tilde n$ vertices which do not belong to the previously built path, by making use of the property \ref{main-proof-P2}, we extend the path by $t$ vertices.

  We prove by induction that for any $s \in \{ 1, \ldots, \floor{(1 - 2 \xi) n'} \}$ the following holds:
  \begin{enumerate}[(i), font=\itshape]
    \item\label{main-path} there exist sets $P^s_i \subseteq \tilde V_i \setminus S_i$ of size $|P^s_i| = s - 1$, for all $ i \in [t]$,
    \item\label{main-leftover} there exists $\mathbf{T}^{s} = (T^s_1, \ldots, T^s_t)$ such that $T^s_i \subseteq \tilde V_i \setminus (S_i \cup P^s_i)$ and $|T^s_i| = \tilde n$, for all $i \in [t]$,
    \item\label{main-expansion} there exists a subset of canonical copies $\cK^{s} \subseteq K_k(T^{s}_{t - k + 1}, \ldots, T^s_t)$ of size $|\cK^s| \ge (1 - 20k\delta) x$ such that for all $K' \in \cK^s$ there exists a $k$-path $P$ which starts in $K^*$, ends in $K'$, $V(P) \cap \tilde V_i \subseteq P^s_i \cup T^s_i$, and $|V(P) \cap \tilde V_i| = s$ for all $i \in [t]$.
  \end{enumerate}

  The sets $P_i^{s}$ represent vertices of the partition classes $\tilde V_i$ used by the current path and the tuples $\mathbf{T}^s$ in $\ref{main-leftover}$ are used to show expansion of the current path starting at the previously fixed copy $K^*$ to many canonical copies of $K_k$ in $(T_{t - k + 1}^{s}, \ldots, T_{t}^{s})$.

  The base of the induction, i.e.\ $s = 1$, holds directly by the choice of $K^*$, $\cK^{1}$, and by setting $P^1_i = \varnothing$, for all $i \in [t]$. Assume now that the hypothesis holds for some $1 \leq s < \floor{(1 - 2\xi)n'}$ and let us show that it holds for $s + 1$.

  Let $T_i^{s + 1} \subseteq \tilde V_i \setminus (P^{s}_i \cup T^{s}_i)$ be an arbitrary subset of size $|T^{s + 1}_i| = \tilde n$, for all $i \in [t]$. As
  \[
    |\tilde V_i \setminus (P^{s}_i \cup T^{s}_i)| \ge n' - ((1 - 2\xi)n' + \xi n') \ge \xi n' = \tilde n,
  \]
  such a subset indeed exists. Since $1 - 20k\delta \geq \delta$, by the induction hypothesis and property \ref{main-proof-P2} applied to the $(k + t)$-tuple $(T^{s}_{t - k + 1}, \ldots, T^{s}_t, T^{s + 1}_{1}, \ldots, T^{s + 1}_t)$, we have that there exist $K' \in \cK^{s}$ and $\cK^{s + 1} \subseteq K_k(T^{s + 1}_{t - k + 1}, \ldots, T^{s + 1}_t)$ of size $|\cK^{s + 1}| = (1 - 20k\delta) x$ such that $K'$ expands to $\cK^{s + 1}$ over $(T^{s}_{t - k + 1}, \ldots, T^{s}_t, T^{s + 1}_{1}, \ldots, T_t^{s + 1})$. Again making use of the induction hypothesis, there exists a $k$-path $P'$ which starts in $K^*$, ends in $K'$, $V(P') \cap T^{s + 1}_i = \varnothing$, and $|V(P') \cap \tilde V_i| = s$. Finally, we set $P^{s + 1}_i = V(P') \cap \tilde V_i$. One easily checks that properties $\ref{main-path}$--$\ref{main-expansion}$ are now satisfied.

  Note that when $s = \floor{(1 - 2 \xi) n'}$ property $\ref{main-expansion}$ tells us that there exist `many' $k$-paths which start in $K^*$ and are of length
  \begin{align*}
    \floor{(1 - 2 \xi) n'} t &\geq N - \eps'' N - n' t_0 - 2 \xi n' t \geq N - \eps'' N - n' t_0 - 2\xi n' r t_0 \\
    &\geq (1 - \eps'')N - 3\xi n' r t_0 \geq (1 - \eps'' - 3\xi)N \geq (1 - \eps)N.
  \end{align*}
  It remains to show that at least one of these paths can be closed into a $k$-cycle. Indeed, this is possible due to our initial choice of $K^*$. Let $(S'_1, \ldots, S'_t)$ be a tuple such that $S'_i \subseteq \tilde V_i \setminus (P_i^{s} \cup T_i^{s} \cup S_i)$ and $|S'_i| = \tilde n $ for all $i \in [t]$, where $s = \floor{(1 - 2\xi)n'}$. By applying property \ref{main-proof-P2} to the $(2k + t)$-tuple
  \[
    \mathbf{C} = (T_{t - k + 1}^{s}, \ldots, T_{t}^{s}, S'_{1}, \ldots, S'_{t}, S_{1}, \ldots, S_k)
  \]
  and the set $\cK^s$, we obtain a canonical copy $K \in \cK^s$ and a set $\tilde \cK \subseteq K_k(S_{1}, \ldots, S_k)$ of size $\tilde \cK \ge (1 - 10k \delta) x$ such that $K$ expands to $\tilde \cK$ over $\mathbf{C}$. Recall, as $\cK^{*} \subseteq K_k(S_1, \ldots, S_k)$ is of size at least $(1 - 20k\delta)x$ and $K^*$ expands to every copy of $K_k$ in $\cK^{*}$, we can connect $K^*$ via the sets $S_t, \ldots, S_1$ by a $k$-path to some copy of $K_k$ in $\tilde \cK \cap \cK^{*}$, which in turn closes the desired $k$-cycle.
\end{proof}

\paragraph*{Acknowledgements.} We would like to thank the anonymous reviewers for exceptionally careful reading of our paper and many helpful comments which led to the improved presentation.

\bibliographystyle{abbrv}
\bibliography{references}

\end{document}